\documentclass{article}

\usepackage{arxiv}
\usepackage[numbers,sort,compress]{natbib}
\usepackage[utf8]{inputenc} 
\usepackage[T1]{fontenc}    
\usepackage{hyperref}       
\usepackage{url}            
\usepackage{booktabs}       
\usepackage{amsfonts}       
\usepackage{amssymb}
\usepackage{nicefrac}       
\usepackage{microtype}      
\usepackage{lipsum}		
\usepackage{graphicx}
\usepackage{adjustbox}
\usepackage[numbers]{natbib}
\usepackage{doi}
\usepackage{multirow}
\usepackage{amsmath}
\usepackage[table,dvipsnames]{xcolor}
\usepackage{amsthm}
\newtheorem{theorem}{Theorem}[section]
\newtheorem{proposition}[theorem]{Proposition}
\newtheorem{lemma}[theorem]{Lemma}
\newtheorem{corollary}[theorem]{Corollary}
\definecolor{citeblue}{RGB}{0,90,180}

\hypersetup{
    colorlinks=true,
    citecolor=citeblue,
    linkcolor=Maroon,
    urlcolor=black
}

\title{TensorSpectra: Bridging the Spectral and Tensor Worlds for High-Dimensional PDEs}

 \author{
  \normalfont
  \begin{tabular}{@{}cc@{}}
    \begin{tabular}[t]{@{}c@{}}
      \makebox[0.43\textwidth]{\textbf{Bingcheng Hu}} \\
      Department of Mathematics \\
      Shanghai Normal University \\
      Shanghai, China
    \end{tabular}
    &
    \begin{tabular}[t]{@{}c@{}}
      \makebox[0.43\textwidth]{\textbf{Tianchi Yu}} \\
      AXXX \\
      Applied AI Institute \\
      Southern University \\
      of Science and Technology \\
      Moscow, Russia
    \end{tabular}
    \\
    \noalign{\vskip 1.2em}
    \begin{tabular}[t]{@{}c@{}}
      \makebox[0.43\textwidth]{\textbf{Zhaoxiang Li}\thanks{Corresponding author, email: \texttt{zxli@shnu.edu.cn} }} \\
      Department of Mathematics \\
      Shanghai Normal University \\
      Shanghai, China
    \end{tabular}
    &
    \begin{tabular}[t]{@{}c@{}}
      \makebox[0.43\textwidth]{\textbf{Ivan Oseledets}} \\
      AXXX \\
      Applied AI Institute \\
      INM \\
      Moscow, Russia
    \end{tabular}
  \end{tabular}
}

\date{}

\renewcommand{\shorttitle}{TensorSpectra: Bridging the Spectral and Tensor Worlds for High-Dimensional PDEs}

\hypersetup{
pdftitle={A template for the arxiv style},
pdfsubject={q-bio.NC, q-bio.QM},
pdfauthor={David S.~Hippocampus, Elias D.~Striatum},
pdfkeywords={First keyword, Second keyword, More},
}

\begin{document}
\addtocontents{toc}{\protect\setcounter{tocdepth}{-1}}
\maketitle

\begin{abstract}
High-dimensional partial differential equations (PDEs) remain challenging for spectral methods due to the exponential growth of multidimensional coefficient spaces. We propose \textbf{TensorSpectra}, a low-rank spectral framework that directly parameterizes spectral coefficient tensors in canonical polyadic (CP) form, reducing representation complexity from $\mathcal{O}(N^d)$ to $\mathcal{O}(dNr)$. TensorSpectra combines structure-preserving spectral calculus, de-aliased nonlinear interactions, operator-aware residual weighting, and progressive low-rank corrections to enable efficient PDEs solving in compressed coefficient space. We further establish a theoretical framework connecting spectral truncation, low-rank approximation, and optimization errors. Extensive experiments across five PDE families and 24 benchmark configurations demonstrate \textbf{S}tate-\textbf{O}f-\textbf{T}he-\textbf{A}rt accuracy among five representative baselines, with TensorSpectra consistently achieving the lowest relative $L^2$ errors. More broadly, our findings offer a new perspective on high-dimensional PDEs solving, suggesting that exploiting intrinsic low-rank structure in spectral coefficient space provides a viable alternative to conventional physical-space approaches.
\end{abstract}

\keywords{High-dimensional PDEs \and TensorSpectra framework \and Spectral methods \and Low-rank tensor approximation \and Physics-informed machine learning }

\section{Introduction}

\subsection{Background and Motivation}
High-dimensional partial differential equations (PDEs) arise naturally in many scientific and engineering problems, including uncertainty quantification, kinetic equations, stochastic dynamics, molecular simulation, and high-dimensional control \cite{Bungartz_Griebel_2004,han2018solving}. In these settings, the solution often depends on a large number of spatial, parametric, or latent variables, making accurate numerical approximation substantially more challenging than in low-dimensional regimes \cite{cohen2015approximation,hornung2025space}. Developing scalable solvers for such problems is therefore important not only for extending classical numerical methods to higher dimensions, but also for enabling reliable simulation and prediction in systems whose intrinsic state spaces are inherently high-dimensional. However, the computational cost of conventional discretization-based methods typically grows rapidly with dimension, making the efficient representation and solution of high-dimensional PDEs a persistent challenge in scientific computing \cite{khoromskij2012tensors}.

The above difficulty suggests that the scalability of high-dimensional PDE solvers depends critically on how the solution is represented. Spectral coordinates provide a natural description for smooth solutions and expose differential operators through explicit algebraic structure \cite{trefethen2000spectral,boyd2001chebyshev}. However, the resulting multidimensional coefficient field is itself a high-order tensor, whose dense representation still grows exponentially with dimension \cite{bigoni2016spectral}. A large ambient tensor does not necessarily imply equally high intrinsic complexity, since spectral coefficients may exhibit strong cross-dimensional separability and admit a compact low-rank representation \cite{bachmayr2023low}. This observation motivates a representation-centric question: \textbf{Can high-dimensional PDEs be solved directly in spectral space while replacing the exponentially large coefficient tensor with a structured low-rank representation?}

\subsection{Related Work}
\paragraph{Physics-informed neural PDE solvers.}
Physics-informed neural PDE solvers have evolved well beyond the original PINN formulation. Strong-form methods such as PINN \cite{raissi2019physics} and DGM \cite{sirignano2018dgm} minimize pointwise PDE residuals, while DRM \cite{yu2018deep}, VPINN \cite{kharazmi2019variational}, and $hp$-VPINNs \cite{kharazmi2021hp} introduce energy- or weak-form objectives; cPINN \cite{jagtap2020conservative} and XPINNs \cite{jagtap2020extended} further exploit domain decomposition. Recent work has also improved neural representations through adaptive activations, trainable Gaussian features such as PIG \cite{kang2025pig}, and residual-adaptive architectures such as PirateNets \cite{wang2024piratenets}. Nevertheless, optimization remains challenging because differential operators can induce ill-conditioned objectives and imbalance among PDE, initial, and boundary losses \cite{wang2021understanding}, motivating gradient-based balancing \cite{krishnapriyan2021characterizing}, NTK-guided weighting \cite{wang2022and}, and self-adaptive weighting \cite{mcclenny2023self}. Despite these advances in formulation, architecture, and optimization, most methods still rely on generic coordinate-to-solution neural representations whose internal structure is learned implicitly, leaving the explicit and dimension-scalable organization of high-dimensional PDE solutions largely unresolved.

\paragraph{Spectral methods and neural spectral solvers.} 
Spectral methods approximate PDE solutions using global orthogonal bases, with Fourier bases naturally suited to periodic problems and Jacobi families, including Legendre and Chebyshev polynomials, providing flexible representations for non-periodic domains \cite{shen2011spectral,boyd2001chebyshev,trefethen2000spectral}. For sufficiently smooth solutions, they offer rapid convergence and structured algebraic representations of differential operators in coefficient space \cite{gottlieb1977numerical}. The spectral bias of neural networks, which favors low-frequency components over high-frequency and multiscale structures \cite{rahaman2019spectral,tancik2020fourier}, has further motivated frequency-aware neural PDE solvers. Representative developments include FNN \cite{ngom2021fourier}, FNO \cite{li2021fourier}, and PINO \cite{li2024physics}, followed by spectral operator frameworks such as OPNO \cite{liu2024render} and SCLON \cite{choi2024spectral}. More recently, SINN formulate PDE solving directly in coefficient space using algebraic spectral differentiation \cite{yu2025spectral}, with subsequent extensions incorporating coefficient scaling and basis-dependent embeddings for high-dimensional problems \cite{yu2026spectral}. Despite these advances, the multidimensional spectral coefficient field itself is generally not endowed with explicit cross-dimensional low-rank structure, leaving its representation complexity in high dimensions only partially addressed.

\paragraph{Tensor decompositions and separable representations.}
Low-rank tensor methods, including canonical
polyadic (CP), Tucker, and tensor-train decompositions, reduce high-dimensional complexity by exploiting separability across coordinates \cite{kolda2009tensor,oseledets2011tensor,hackbusch2009new,grasedyck2013literature,pruliere2010deterministic}. This principle has also been adopted in neural PDE solvers. SPINN \cite{cho2023separable} uses coordinate-wise subnetworks, while TNN \cite{wang2024} represents multivariate solutions through tensor products of one-dimensional neural factors. More closely related, LoRFS \cite{kaminskyovercoming} employs low-rank separable Fourier expansions to enable closed-form, sampling-free evaluation of physics-informed objectives and gradients. However, a compact representation alone does not guarantee efficient PDE computation, as differential operators, nonlinear interactions, and limited representation capacity introduce additional challenges in preserving low-rank structure and solution accuracy. This motivates a tighter integration of tensor decomposition with spectral discretization, where the algebraic structure of spectral operators enables factorwise computation, while low-rank representations control the dimensional complexity of the coefficient space.

\begin{figure}[h]
    \centering
    \includegraphics[width=\textwidth]{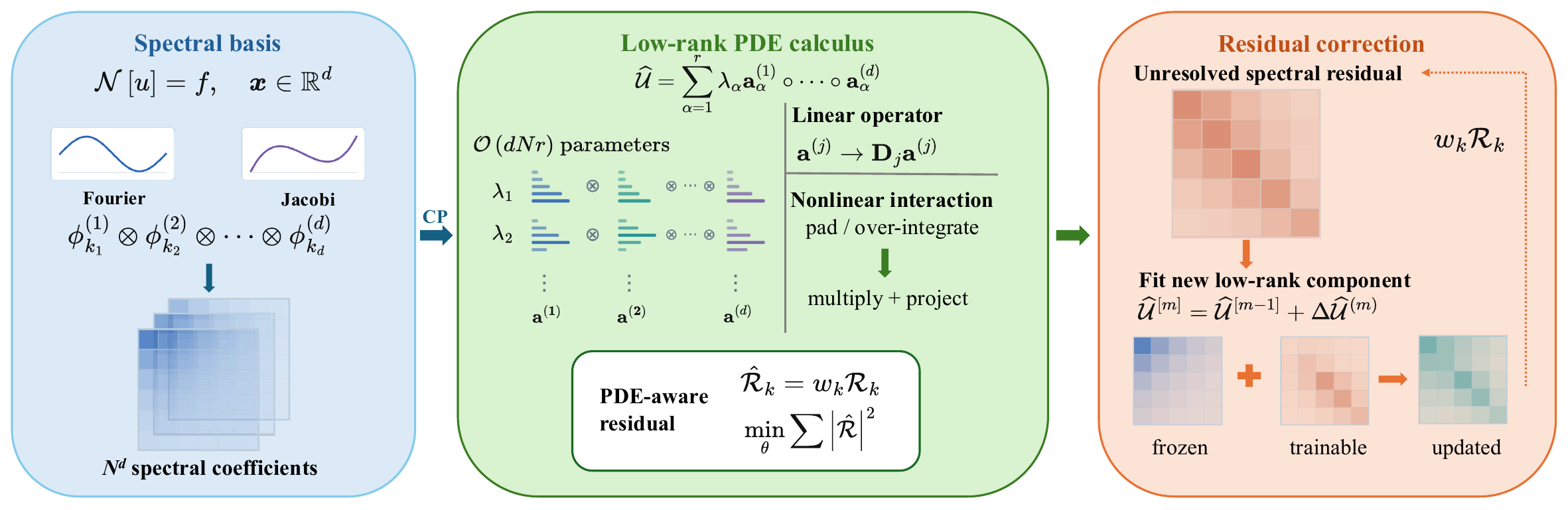}
    \caption{Tensorspectra framework.}
    \label{fig:teaser}
\end{figure}

\subsection{Main Contributions}

Motivated by the above difficulties, we propose \textbf{TensorSpectra}, whose framework could see Figure~\ref{fig:teaser}, a low-rank spectral framework for scalable high-dimensional PDE solving. Our central perspective is that the main obstacle in high-dimensional spectral computation is not necessarily the spectral representation itself, but the exponentially large dense coefficient tensor induced by conventional tensor-product discretization. TensorSpectra therefore represents the multidimensional spectral coefficient field directly in a structured low-rank form and performs PDE computation within this compressed coefficient space, rather than treating spectral coefficients as independent quantities or reconstructing the full tensor. This formulation brings together the high-accuracy and algebraic operator structure of spectral methods with the dimensional scalability of low-rank tensor representations, while retaining a unified framework for representation, operator evaluation, and progressive refinement. The main contributions are summarized as follows:

\begin{itemize}

\item \textbf{Low-rank spectral tensor representation.}
We represent the multidimensional spectral coefficient field directly in CP form, reducing the parameter complexity from $\mathcal{O}(N^d)$ to $\mathcal{O}(dNr)$ while explicitly capturing cross-dimensional spectral structure across different spectral bases.

\item \textbf{Structure-preserving spectral operator calculus.}
We apply differential operators directly to one-dimensional spectral factors, avoiding dense tensor reconstruction and repeated spatial automatic differentiation. The same structured formulation extends to nonlinear PDEs through de-aliased spectral interactions.

\item \textbf{Low-rank spectral residual correction.}
We introduce a staged correction scheme that progressively learns low-rank updates from the unresolved PDE residual, improving fixed-rank approximations without retraining the entire representation at a larger rank.

\item \textbf{Spectral--rank convergence analysis.}
We establish an error framework separating spectral truncation, low-rank approximation, and optimization errors, together with stability and convergence results that characterize the roles of spectral resolution, tensor rank, and progressive correction.

\end{itemize}

\subsection{Organization}
The remainder of this paper is organized as follows: Section~\ref{sec:preliminaries} introduces the PDE setting and spectral notation. Section~\ref{sec:ts-method} presents TensorSpectra, and Section~\ref{sec:ts-theory} analyzes its approximation and residual bounds. Section~\ref{sec:experiments} and \ref{sec:ablation} report numerical results and ablation studies respectively.  Section~\ref{sec:conclusions} closes the paper. Besides, the appendix provides technical details and proofs.

\section{Preliminaries}\label{sec:preliminaries}
In this section, we first introduce the general PDE formulation considered in this work and then establish the spectral representation and notation used throughout the paper, providing the necessary foundation for the TensorSpectra framework developed in the next section.

\subsection{Problem Setup}
We consider a general class of high-dimensional partial differential equations defined on a spatial domain $\Omega \subset \mathbb{R}^d$ over a time interval $[0,T]$. The solution $u(\boldsymbol{x},t)$ is governed by
\begin{equation} \label{eq:pde}
  \begin{aligned}
    \mathcal{N}[u](\boldsymbol{x},t) &= f(\boldsymbol{x},t), && (\boldsymbol{x},t)\in\Omega\times[0,T],\\
    u(\boldsymbol{x},t) &= g(\boldsymbol{x},t), && (\boldsymbol{x},t)\in\partial\Omega\times[0,T],\\
    u(\boldsymbol{x},0) &= h(\boldsymbol{x}), && \boldsymbol{x}\in\Omega,
  \end{aligned}
\end{equation}
where $\mathcal{N}$ denotes a general differential operator, $f$ is the forcing term, and $g$ and $h$ specify the boundary and initial conditions, respectively. This formulation accommodates both linear and nonlinear PDEs considered throughout this work.

For stationary problems, the temporal dependence and initial condition are omitted, and the governing equation reduces to $\mathcal{N}[u](\boldsymbol{x})=f(\boldsymbol{x})$. Although the formulation above is written with Dirichlet boundary data for simplicity, other boundary conditions can be incorporated through the corresponding boundary operator. Throughout this work, the main difficulty of interest is the high-dimensional dependence of $u$ on $\boldsymbol{x}$, which motivates seeking a representation that remains tractable as the dimension $d$ increases.

To obtain a representation that is compatible with high-dimensional computation, we express the solution in a tensor-product spectral basis rather than parameterizing it directly in physical space. This transforms the original PDE into an equivalent problem for the corresponding multidimensional spectral coefficients, where differentiation and other PDE operators can be treated through their algebraic action in coefficient space. We introduce this spectral representation and the associated notation in the next subsection.

\subsection{Spectral Representation}
For each coordinate direction $j=1,\ldots,d$, we introduce a one-dimensional spectral basis $\{\phi_{k_j}^{(j)}\}$ in the variable $x_j$. Their tensor product defines the multidimensional basis $\Phi_{\boldsymbol{k}}(\boldsymbol{x})=\prod_{j=1}^{d}\phi_{k_j}^{(j)}(x_j)$, where $\boldsymbol{k}=(k_1,\ldots,k_d)$ denotes the multidimensional spectral index. The solution is then approximated by the truncated expansion
\begin{equation}
  u(\boldsymbol{x},t) \approx \sum_{\boldsymbol{k}\in\mathcal{K}} \widehat{u}_{\boldsymbol{k}}(t) \Phi_{\boldsymbol{k}}(\boldsymbol{x}) = \sum_{\boldsymbol{k}\in\mathcal{K}} \widehat{u}_{\boldsymbol{k}}(t) \prod_{j=1}^{d}\phi_{k_j}^{(j)}(x_j),
\end{equation}
where $\mathcal{K}$ denotes the retained spectral index set and $\widehat{u}_{\boldsymbol{k}}(t)$ is the spectral coefficient associated with mode $\boldsymbol{k}$. The choice of the one-dimensional basis depends on the problem setting; for example, Fourier bases are naturally suited to periodic domains, while polynomial bases such as Legendre or Jacobi polynomials can be used for non-periodic problems.

Collecting the coefficients associated with all retained multidimensional modes yields the spectral coefficient tensor
\begin{equation}
  \widehat{\mathcal U} = \left[ \widehat{u}_{k_1},\ldots,\widehat{u}_{k_d} \right] \in \mathbb{C}^{N_1\times\cdots\times N_d},
\end{equation}
where $N_j$, $j = 1,\ldots,d$ denotes the number of retained spectral modes along the $j$-th coordinate. When the same resolution $N$ is used in every dimension, this simplifies to $\widehat{\mathcal U}\in\mathbb{C}^{N\times\cdots\times N}$. Thus, the spectral representation of a high-dimensional PDE solution can be equivalently characterized by its multidimensional coefficient tensor $\widehat{\mathcal U}$.

Despite the approximation power of spectral methods, the multidimensional coefficient tensor becomes prohibitively large in high dimensions. If $N$ spectral modes are retained along each coordinate, the full tensor $\widehat{\mathcal U}$ contains $N^d$ entries, leading to exponential growth in both storage and computation as $d$ increases. This tensor-product bottleneck motivates a more compact representation of the spectral coefficient field itself. TensorSpectra addresses this issue by imposing an explicit low-rank tensor structure on $\widehat{\mathcal U}$, which is introduced in the next section.

\section{Methodology}\label{sec:ts-method}
In this section, we present TensorSpectra, a framework for solving high-dimensional PDEs by optimizing a structured spectral coefficient field. We introduce its CP representation with trainable temporal factors, factorwise spectral operator calculus, operator-aware residual objective, and additive low-rank refinement. Together, these components enable solution representation and residual evaluation through one-dimensional factors.

\subsection{Low-Rank Spectral Tensor Representation}
For time-dependent problems, we represent the spatial coefficient tensor for Equation~\eqref{eq:pde} as
\begin{equation}\label{eq:ts-temporal-cp}
  \widehat{\mathcal U}(t) =\sum_{\alpha=1}^{r}\lambda_{\alpha}T_{\alpha}(t) \,\mathbf a_{\alpha}^{(1)}\circ\cdots\circ\mathbf a_{\alpha}^{(d)},
  \quad \widehat{u}_{\boldsymbol{k}}(t) =\sum_{\alpha=1}^{r}\lambda_{\alpha}T_{\alpha}(t) \prod_{j=1}^{d}a_{\alpha,k_j}^{(j)}.
\end{equation}
Here, $\circ$ denotes the vector outer product, $r$ is the prescribed CP rank, $\lambda_\alpha$ is a component amplitude, and $\mathbf a_\alpha^{(j)}\in\mathbb C^{N_j}$ contains one-dimensional spectral coefficients. The spatial factors are independent of time; the trainable factor $T_\alpha(t)$ carries the temporal dependence. Stationary problems are recovered by setting $T_\alpha\equiv1$. Each component therefore combines a separable spatial spectrum with its own temporal evolution. The trainable quantities are the amplitudes and one-dimensional factor coefficients; an individual mode is evaluated by the scalar product formula in Equation~\eqref{eq:ts-temporal-cp}, without assembling the complete tensor.

We expand each temporal factor in a finite basis,
\begin{equation}\label{eq:ts-time-basis}
  T_\alpha(t)=\sum_{q=0}^{p_t}c_{q,\alpha}\psi_q(t).
\end{equation}
Low-degree polynomials in normalized time, $\psi_q(t)=(t/T)^q$, provide a compact parameterization. Alternatively, collecting $\mathbf c_\alpha=(c_{0,\alpha},\ldots,c_{p_t,\alpha})^\top$ gives the space--time coefficient tensor
\begin{equation}\label{eq:ts-spacetime-cp}
  \widehat{\mathcal U}_{\mathrm{st}} =\sum_{\alpha=1}^{r}\lambda_\alpha \,\mathbf a_\alpha^{(1)}\circ\cdots\circ\mathbf a_\alpha^{(d)}\circ\mathbf c_\alpha.
\end{equation}
Equation~\eqref{eq:ts-temporal-cp} and Equation~\eqref{eq:ts-spacetime-cp} describe the same separated approximation that contracting the final coefficient axis with the temporal basis recovers the spatial tensor at any time. The first form supports residual evaluation at sampled times, while the second permits projection of the residual onto a temporal spectral basis. Both forms use $r(1+\sum_jN_j+p_t+1)$ parameters rather than a dense space--time tensor. Fourier factors are used for periodic coordinates and Jacobi-type factors for non-periodic coordinates; basis conventions are given in Appendix~\ref{sec:spectral basis}.

\subsection{Structure-Preserving Spectral Calculus}
Let $\mathbf D_j$ be the one-dimensional coefficient differentiation map in coordinate $x_j$. Spatial differentiation transforms only the corresponding CP factor, giving
\begin{equation}
  \partial_{x_j}\widehat{u}(t) =\sum_{\alpha=1}^{r}\lambda_\alpha T_\alpha(t) \,\mathbf a_\alpha^{(1)}\circ\cdots\circ (\mathbf D_j\mathbf a_\alpha^{(j)})\circ\cdots\circ\mathbf a_\alpha^{(d)}.
\end{equation}
Higher derivatives apply the corresponding higher-order differentiation map to one factor, while mixed derivatives transform each participating factor. Thus, spatial and temporal derivatives can be combined by acting independently on their respective axes. Each separable operator term preserves the rank bound; sums of terms can increase it. For Fourier bases, $\mathbf D_j=\operatorname{diag}(\mathrm{i}k_j)$, giving $\widehat{\Delta u}_{\boldsymbol{k}}=-\|\boldsymbol{k}\|_2^2\widehat{u}_{\boldsymbol{k}}$.

Time differentiation acts on the temporal factors according to
\begin{equation}\label{eq:ts-time-derivative}
  \partial_t^s \widehat{u}(t) =\sum_{\alpha=1}^{r}\lambda_\alpha T_\alpha^{(s)}(t) \,\mathbf a_\alpha^{(1)}\circ\cdots\circ\mathbf a_\alpha^{(d)},
  \quad s=1,2.
\end{equation}
The derivatives are obtained analytically from Equation~\eqref{eq:ts-time-basis}, including the physical-time scaling. In the space--time coefficient form, the equivalent operation replaces $\mathbf c_\alpha$ by $\mathbf D_t^s\mathbf c_\alpha$, where $\mathbf D_t$ is the temporal differentiation matrix. Differentiation with respect to physical time includes the scaling from the reference interval to $[0,T]$, so temporal resolution and the time horizon enter the operator consistently.

Polynomial nonlinearities are evaluated by expanding interactions between CP components and projecting their one-dimensional factor products. Fourier products use padding and truncation; Jacobi products use over-integration. Space--time products also include the temporal factors. A product of two rank-one components remains separable, allowing each coordinate product to be evaluated and projected independently before recombining the factors. The enlarged Fourier bandwidth or polynomial quadrature order prevents unresolved product modes from aliasing into the retained coefficients. These operations avoid dense tensor assembly, although nonlinear interactions can increase the rank. De-aliasing and interaction counts are detailed in Appendix~\ref{sec:de-aliased}.

\subsection{PDE-Aware Spectral Objective}
Let $\mathcal R_{\boldsymbol{k}}(t)=\widehat{\mathcal N[u]-f}_{\boldsymbol{k}}(t)$ be the spatial spectral residual.
The residual is formed by applying the spectral calculus to the current approximation and subtracting the projected forcing. Differential operators can amplify spectral components by different amounts, causing strongly amplified modes to dominate an unweighted objective even when their solution errors are small.

To compensate for operator-induced differences in residual scale, we apply a positive weight $w_{\boldsymbol{k}}$ based on the characteristic scale of the linear PDE operator. For a Fourier operator $\mathcal L$, this takes the form $w_{\boldsymbol{k}}=\rho(\sigma_{\mathcal L}(\boldsymbol{k}))$, with a decreasing scaling function $\rho$. For polynomial bases, differentiation mixes modes, so the weight is constructed from a coefficient-space operator scale rather than a diagonal Fourier symbol. The weight rescales the optimization objective while retaining the full PDE residual, including nonlinear terms. Given sampled mode--time pairs $\mathcal S$, the weighted loss is
\begin{equation}
  \mathcal L_{\mathrm{PDE}}^{\mathrm{prec}} =\frac{1}{|\mathcal S|}\sum_{(\boldsymbol{k},t)\in\mathcal S} |w_{\boldsymbol{k}}\mathcal R_{\boldsymbol{k}}(t)|^2.
\end{equation}
When time is projected onto a spectral basis, training instead uses the space--time residual coefficients in the objective
\begin{equation}
  \mathcal L_{\mathrm{PDE}}^{\mathrm{prec}} =\frac{1}{|\mathcal K||\mathcal Q|} \sum_{\boldsymbol{k}\in\mathcal K}\sum_{q\in\mathcal Q} |w_{\boldsymbol{k},q}\mathcal R_{\boldsymbol{k},q}|^2,
\end{equation}
where $\mathcal Q$ is the retained temporal index set. Here, $\mathcal R_{\boldsymbol{k},q}$ is the temporal projection of the spatial spectral residual. The weight may depend on both spatial indices and temporal degree. The sampled-time and projected-time objectives are two ways to train the temporal representation; their residual evaluations use the same factorwise operator construction.

For a PDE of temporal order $s_t$, prescribed initial coefficients $\widehat h_{\boldsymbol{k}}^{(\ell)}$ are enforced by
\begin{equation}
  \mathcal L_{\mathrm{IC}} =\sum_{\ell=0}^{s_t-1}\frac{\gamma_\ell}{|\mathcal K_0|} \sum_{\boldsymbol{k}\in\mathcal K_0} \left|\partial_t^\ell\widehat u_{\boldsymbol{k}}(0) -\widehat h_{\boldsymbol{k}}^{(\ell)}\right|^2,
  \quad \mathcal L=\mathcal L_{\mathrm{PDE}}^{\mathrm{prec}} +\lambda_{\mathrm{IC}}\mathcal L_{\mathrm{IC}}+\mathcal L_{\mathrm{con}}.
\end{equation}
Here, $\gamma_\ell>0$ balance the initial data, and $\mathcal L_{\mathrm{con}}$ collects any boundary or problem-specific constraint penalties. Second-order evolution equations require both the initial value and its first time derivative. Terms enforced exactly by the basis or a hard parameterization are omitted; for example, $u=h_K+t\,v$ imposes a projected initial value $h_K$, with only the free component $v$ optimized. Stationary problems omit time and $\mathcal L_{\mathrm{IC}}$. PDE-specific residuals and weights appear in Appendix~\ref{sec:pde-loss}.

\subsection{Low-Rank Spectral Residual Correction}
A fixed small rank model may reach an accuracy plateau because its available separated components cannot resolve the remaining error. TensorSpectra introduces additional capacity in stages. After training the base model, it freezes the learned parameters and adds a new low-rank component of the form
\begin{equation}
  \Delta\widehat{\mathcal U}^{(m)}(t) =\sum_{\alpha=1}^{r_m}\delta\lambda_\alpha^{(m)} \delta T_\alpha^{(m)}(t) \,\delta\mathbf a_\alpha^{(m,1)}\circ\cdots\circ\delta\mathbf a_\alpha^{(m,d)},
  \quad \widehat{\mathcal U}^{[m]}(t) =\widehat{\mathcal U}^{[m-1]}(t)+\Delta\widehat{\mathcal U}^{(m)}(t).
\end{equation}
After $m$ stages, the accumulated rank is bounded by $r+\sum_{\ell=1}^{m}r_\ell$. Each increment has its own spatial and temporal factors, so refinement can improve both the spectrum and its evolution. Starting from the base model $\widehat{\mathcal U}^{[0]}$, only the new parameters $\boldsymbol\theta_m$ are optimized by minimizing the cumulative objective,
\begin{equation}
  \boldsymbol\theta_m^* =\arg\min_{\boldsymbol\theta_m} \mathcal L\!\left[\widehat{\mathcal U}^{[m-1]} +\Delta\widehat{\mathcal U}^{(m)}(\boldsymbol\theta_m)\right].
\end{equation}
The same full objective evaluates the cumulative solution, including any soft initial or boundary constraints. For linear PDEs, training seeks a correction whose operator image cancels the remaining residual; for nonlinear PDEs, the full nonlinear residual is recomputed without linearization. Hard-constrained models use increments satisfying the corresponding homogeneous constraints, such as $t\,\delta v$ for a fixed initial value. Temporal factors may also be stored as an additional CP axis; stationary corrections set them to one.

\section{Theoretical Analysis}\label{sec:ts-theory}

In this section, we analyze the coefficient-space method described in Section~\ref{sec:ts-method}, which combines a CP spectral representation, factorwise linear operations, weighted spectral residual training, and additive corrections whose earlier stages are frozen.  Let $H=L^2(\Omega,\mu)$, where $\mu$ is normalized periodic measure for Fourier basis or the product Jacobi measure for polynomial basis.  We use orthonormal one-dimensional bases; unnormalized Jacobi coefficients must first be rescaled by their basis norms.  Let $V_N=\bigotimes_{j=1}^d V_{N_j}^{(j)}$ and let $P_N$ be the $H$-orthogonal projector.  Write $u_N=P_Nu$, let $\widehat{\mathcal U}_N$ be its coefficient tensor, and set
\begin{equation}\label{eq:ts-errors}
  E_{\mathrm{spec}}(N)=\|u-u_N\|_H,\quad E_{\mathrm{rank}}(N,r) =\inf_{\operatorname{rank}_{\rm CP}(\widehat{\mathcal V})\le r} \|\widehat{\mathcal U}_N-\widehat{\mathcal V}\|_F.
\end{equation}
Here $\bigotimes$ forms a tensor-product function space, whereas $\circ$ denotes the outer product of coefficient vectors used in the CP representation of Section~\ref{sec:ts-method}. We use an infimum because the rank-$r$ CP set need not be closed for tensor order at least three.  For a trained rank-$r$ coefficient tensor $\widehat{\mathcal U}_\theta$, choose any $\widehat{\mathcal V}_\eta$ of rank at most $r$ with $\|\widehat{\mathcal U}_N-\widehat{\mathcal V}_\eta\|_F\le E_{\mathrm{rank}}(N,r)+\eta$, and put $E_{\mathrm{opt}}(\theta;\widehat{\mathcal V}_\eta) =\|\widehat{\mathcal V}_\eta-\widehat{\mathcal U}_\theta\|_F$. This term records the gap to an approximately best solution approximation; a small PDE training loss controls it only under a separate stability and consistency argument.

\subsection{Representation and Computational Cost}
We first quantify storage and factorwise operation costs for a prescribed CP rank.
\begin{proposition}[Storage and factorwise linear work]\label{prop:ts-cost}
A dense coefficient tensor for $V_N$ stores $\prod_jN_j$ scalar coefficients.  A CP parameterization with $r$ terms stores $r+r\sum_jN_j$ scalar parameters. For $N_j=N$, these are $N^d$ and $r+dNr$, respectively.  A coordinatewise linear operation on all $d$ factors costs $O\bigl(r\sum_j C_{\rm sp}(N_j)\bigr)$, where $C_{\rm sp}(N_j)$ is the cost of one one-dimensional operation.  Thus Fourier differentiation costs $O(dNr)$, while a naive dense Jacobi differentiation matrix costs $O(dN^2r)$.  A sum of $L$ tensor-product operator terms can create up to $Lr$ CP terms; forming and contracting that result has additional cost.
\end{proposition}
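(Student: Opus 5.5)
The argument is direct counting, carried out in the order of the four claims in Proposition~\ref{prop:ts-cost}.

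\textbf{Storage.} A dense coefficient tensor for $V_N$ has one entry for each multi-index $\boldsymbol k\in\prod_j\{1,\dots,N_j\}$, so it stores $\prod_jN_j$ scalars. In the CP form of Equation~\eqref{eq:ts-temporal-cp}, with $T_\alpha\equiv1$ (or a fixed time), each of the $r$ terms carries one amplitude $\lambda_\alpha$ and $d$ factor vectors $\mathbf a_\alpha^{(j)}\in\mathbb C^{N_j}$. The count is therefore $r(1+\sum_jN_j)=r+r\sum_jN_j$. Setting $N_j=N$ gives $N^d$ and $r+dNr$.

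\textbf{Factorwise linear work.} I will use the structure-preserving identity from Section~\ref{sec:ts-method}. A coordinatewise operator $A_1\otimes\cdots\otimes A_d$ acts on a rank-one term by
\[
\lambda_\alpha\,\mathbf a_\alpha^{(1)}\circ\cdots\circ\mathbf a_\alpha^{(d)}\mapsto\lambda_\alpha\,(A_1\mathbf a_\alpha^{(1)})\circ\cdots\circ(A_d\mathbf a_\alpha^{(d)}).
\]
This follows from the definition of the Kronecker action on outer products, $(\otimes_jA_j)(\circ_j\mathbf a^{(j)})=\circ_j(A_j\mathbf a^{(j)})$, which one checks entrywise: $\sum_{\boldsymbol l}\prod_j(A_j)_{k_jl_j}a_{l_j}^{(j)}=\prod_j(A_j\mathbf a^{(j)})_{k_j}$. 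By linearity the identity extends to all $r$ terms. The output has the same CP form, with no dense tensor formed. The total cost is $r$ times the sum of $d$ one-dimensional applications, that is $O(r\sum_jC_{\rm sp}(N_j))$. For Fourier bases, $\mathbf D_j=\operatorname{diag}(\mathrm ik_j)$ costs $C_{\rm sp}=O(N)$, which gives $O(dNr)$. A dense $N\times N$ Jacobi differentiation matrix costs $C_{\rm sp}=O(N^2)$, which gives $O(dN^2r)$. A derivative acting on only one coordinate takes the other $A_j$ to be the identity, so its cost is bounded by the same expression.

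\textbf{Sums of operator terms.} For $\mathcal A=\sum_{\ell=1}^L\otimes_jA_j^{(\ell)}$, linearity gives $\mathcal A\widehat{\mathcal U}=\sum_\ell\sum_\alpha\lambda_\alpha\circ_j(A_j^{(\ell)}\mathbf a_\alpha^{(j)})$. This is a CP expression with at most $Lr$ terms, and the rank need not drop; the Laplacian, a sum of $d$ terms, is the typical example. Forming the expression costs $L$ times the single-term cost. Any later contraction, such as evaluating residual entries or inner products, acts on $Lr$ terms instead of $r$, and this is the ``additional cost'' the statement refers to.

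The only step that needs care is the phrase ``up to $Lr$''. This is an upper bound on the number of terms, not a claim about the exact CP rank, so no lower-bound argument is required. I will also state explicitly that the cost of building each one-dimensional matrix is excluded, or is precomputed once.
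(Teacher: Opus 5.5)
Your proposal is correct and follows essentially the same counting argument as the paper: one entry per multi-index, $r$ amplitudes plus $rN_j$ factor entries per axis, $rC_{\rm sp}(N_j)$ work per axis summed over $j$, and concatenation of the transformed rank-one terms giving at most $Lr$ terms. Your entrywise check of $(\otimes_jA_j)(\circ_j\mathbf a^{(j)})=\circ_j(A_j\mathbf a^{(j)})$ makes explicit a step the paper takes for granted. The paper also quantifies the cost of materializing the $Lr$-term result as $O(LdrN)$; your ``$L$ times the single-term cost'' gives the same bound once copying the identity factors is counted.
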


See the proof in Appendix~\ref{app:ts-proofs}. Differentiation preserves each CP term, whereas nonlinear products create pairwise interactions.  The next result counts those interactions after factorwise de-aliasing (Appendix~\ref{sec:de-aliased}).

\begin{proposition}[Polynomial nonlinearities]\label{prop:ts-nonlinear}
For a quadratic nonlinearity, the symmetric expansion of a rank-$r$ function contains at most $P_2=\binom{r+1}{2}$ distinct rank-one interactions.  With $N=2K+1$ retained Fourier modes per axis and padded size $M>3K$, factorwise FFT evaluation and retained-mode truncation take $O\bigl(d(r+P_2)M\log M+dP_2N\bigr)$ arithmetic.  For degree-$p$ Jacobi factors, $Q$ quadrature nodes, and dense one-dimensional evaluation and projection matrices, the corresponding bound is $O\bigl(d(r+P_2)Q(p+1)\bigr)$.  The nonlinear CP output requires $O(dP_2N)$ or $O(dP_2(p+1))$ storage.  A cubic interaction has at most $P_3=\binom{r+2}{3}$ terms. These bounds exclude the subsequent residual-norm contraction, which can cost $O(dNR^2)$ for a residual of CP rank $R$.
\end{proposition}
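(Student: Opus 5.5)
The argument is a direct counting argument organized around the bilinear structure of the CP expansion. Write the rank-$r$ function as $u=\sum_{\alpha=1}^r \lambda_\alpha \prod_j \phi_\alpha^{(j)}(x_j)$, where $\phi_\alpha^{(j)}$ is the one-dimensional function with coefficient vector $\mathbf a_\alpha^{(j)}$. A quadratic term expands as
\[
u^2=\sum_{\alpha,\beta}\lambda_\alpha\lambda_\beta\prod_j \phi_\alpha^{(j)}\phi_\beta^{(j)} .
\]
The product is commutative, so the $(\alpha,\beta)$ and $(\beta,\alpha)$ terms coincide. The distinct interactions are therefore indexed by multisets of size two from $\{1,\dots,r\}$, of which there are $\binom{r+1}{2}=P_2$. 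The cubic count works the same way, with multisets of size three, giving $\binom{r+2}{3}=P_3$. Each interaction is a product of rank-one terms and so is itself rank-one. Its coordinate factor is $\phi_\alpha^{(j)}\phi_\beta^{(j)}$, which can be handled axis by axis.

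For the Fourier cost, the key step is to transform each one-dimensional factor to physical space only once. For every $j$ and $\alpha$, zero-pad $\mathbf a_\alpha^{(j)}$ to length $M$ and apply one inverse FFT. This is $dr$ transforms at $O(M\log M)$ each. For each of the $P_2$ pairs and each axis:
\begin{itemize}
\item form the pointwise product on the $M$-point grid at cost $O(M)$;
\item apply one forward FFT at cost $O(M\log M)$;
\item truncate to the $N$ retained modes at cost $O(N)$.
\end{itemize}
Summing over pairs and axes gives $O(d(r+P_2)M\log M+dP_2N)$.

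The product of two bandlimited factors with $|k|\le K$ has bandwidth $2K$. De-aliasing therefore needs $M>3K$: aliased modes then land at $|k|\ge M-2K>K$, outside the retained range. This is the standard padding rule (Appendix~\ref{sec:de-aliased}), which I would cite rather than re-derive.

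For the Jacobi case the same structure holds, with dense matrices in place of FFTs:
\begin{itemize}
\item evaluating a degree-$p$ factor at $Q$ nodes costs $O(Q(p+1))$, and this is done for $dr$ factors;
\item the pointwise products cost $O(dP_2Q)$;
\item projecting each product back onto $p+1$ coefficients with the $(p+1)\times Q$ quadrature matrix costs $O(Q(p+1))$, and this is done for $dP_2$ products.
\end{itemize}
The total is $O(d(r+P_2)Q(p+1))$.

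The output is $P_2$ rank-one terms, each storing $d$ vectors of length $N$ or $p+1$ plus a scalar. This gives the stated storage bounds $O(dP_2N)$ and $O(dP_2(p+1))$.

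For the final remark, the squared Frobenius norm of a rank-$R$ CP tensor is
\[
\sum_{\alpha,\beta}\lambda_\alpha\bar\lambda_\beta\prod_j\langle \mathbf a_\alpha^{(j)},\mathbf a_\beta^{(j)}\rangle .
\]
This requires $R^2$ pairs, each needing $d$ inner products of length $N$, for a total of $O(dNR^2)$.

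The main point needing care is the accounting. The forward transforms of the $r$ original factors must be shared across all pairs; without this reuse the count would be $2P_2$ transforms instead of $r+P_2$. The Jacobi bound additionally requires $Q\ge p+1$, so that the pointwise multiplication cost $O(dP_2Q)$ is absorbed into the projection cost $O(dP_2Q(p+1))$.
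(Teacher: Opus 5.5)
Your proposal is correct and follows essentially the same counting argument as the paper: multiset indexing of the pair and triple interactions, $dr$ shared transforms to physical space plus $dP_2$ product transforms and truncations (or, for Jacobi, dense evaluation and projection per pair and axis), $dP_2$ stored factor vectors, and $R^2$ pairwise products of $d$ one-dimensional inner products for the residual norm. The two caveats in your last paragraph are not needed: since $r\le P_2$, forgoing reuse of the transforms still gives $O(dP_2M\log M)=O(d(r+P_2)M\log M)$, and $Q\le Q(p+1)$ for every $p\ge0$, so the pointwise-product cost is absorbed without assuming $Q\ge p+1$.
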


See the proof in Appendix~\ref{app:ts-proofs}. We next analyze spectral truncation and the rank needed to approximate the retained coefficient tensor.

\subsection{Spectral and CP Approximation}
The projection estimate isolates the error caused by retaining finitely many modes. Its proof uses Fourier coefficient tails and one-dimensional Jacobi projection estimates respectively.
\begin{lemma}[Spectral projection]\label{lem:ts-spectral}
For the Fourier box $\mathcal K_K=\{\boldsymbol k\in\mathbb Z^d:|k_j|\le K\}$ and $u\in H^s(\mathbb T^d)$, $s>m\ge0$,
\begin{equation}\label{eq:ts-fourier}
  \|u-P_Ku\|_{H^m}\le (1+K^2)^{(m-s)/2}\|u\|_{H^s}.
\end{equation}
If $|\widehat u_{\boldsymbol k}| \le B e^{-a|\boldsymbol k|_1}$, then $\|u-P_Ku\|_{L^2}\le C(d,a)B e^{-aK}$. For the product Jacobi weight $w$, denote the base and shifted weighted norms by $\|v\|_w^2=\int_\Omega |v|^2w\,d\boldsymbol x$ and $\|v\|_{j,s}^2=\int_\Omega |v|^2(1-x_j^2)^sw\,d\boldsymbol x$, respectively. For tensor Jacobi projection of degree at most $p_j$ in axis $j$, integer $s_j\ge1$ with $p_j\ge s_j-1$, and the corresponding Jacobi-weighted derivative space,
\begin{equation}\label{eq:ts-jacobi}
  \|u-P_{\boldsymbol p}u\|_w \le \sum_{j=1}^d C_{\alpha_j,\beta_j,s_j}(p_j+1)^{-s_j} \|\partial_j^{s_j}u\|_{j,s_j}.
\end{equation}
The factor $(1-x_j^2)^{s_j}$ shifts the $j$th Jacobi parameters from $(\alpha_j,\beta_j)$ to $(\alpha_j+s_j,\beta_j+s_j)$.
\end{lemma}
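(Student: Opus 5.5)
The lemma has three parts, and I would prove each one separately.

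\textbf{Fourier Sobolev bound.} For \eqref{eq:ts-fourier} I would use Parseval with the normalized periodic measure, writing
\begin{equation*}
\|u-P_Ku\|_{H^m}^2=\sum_{\boldsymbol k\notin\mathcal K_K}(1+|\boldsymbol k|^2)^m|\widehat u_{\boldsymbol k}|^2 .
\end{equation*}
If $\boldsymbol k\notin\mathcal K_K$, then some $|k_j|\ge K+1>K$, so $|\boldsymbol k|^2\ge|\boldsymbol k|_\infty^2>K^2$. Factor each weight as
\begin{equation*}
(1+|\boldsymbol k|^2)^m=(1+|\boldsymbol k|^2)^{m-s}(1+|\boldsymbol k|^2)^{s}.
\end{equation*}
Since $m-s<0$, the first factor is at most $(1+K^2)^{m-s}$. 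Summing the remaining factors gives at most $\|u\|_{H^s}^2$, and taking square roots yields \eqref{eq:ts-fourier}.

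\textbf{Fourier exponential decay.} For the exponentially decaying case I would bound the $L^2$ tail $\sum_{\boldsymbol k\notin\mathcal K_K}B^2e^{-2a|\boldsymbol k|_1}$. By a union bound over the axis $j$ with $|k_j|>K$, this is at most
\begin{equation*}
d\,B^2\Big(2\sum_{n>K}e^{-2an}\Big)\Big(\sum_{n\in\mathbb Z}e^{-2a|n|}\Big)^{d-1}.
\end{equation*}
Both geometric sums are explicit: the first is $\le C(a)e^{-2a(K+1)}\le C(a)e^{-2aK}$. Taking the square root gives $C(d,a)Be^{-aK}$.

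\textbf{Jacobi bound.} For \eqref{eq:ts-jacobi} I would use that the tensor projector factors as $P_{\boldsymbol p}=\pi_1\cdots\pi_d$, where $\pi_j$ is the one-dimensional orthogonal projector acting in $x_j$. These commute and are $L^2_w$-contractions, because the weight is a product weight and the basis is orthonormal. The telescoping identity
\begin{equation*}
I-\pi_1\cdots\pi_d=\sum_{j=1}^d \pi_1\cdots\pi_{j-1}(I-\pi_j)
\end{equation*}
together with contractivity gives
\begin{equation*}
\|u-P_{\boldsymbol p}u\|_w\le\sum_j\|(I-\pi_j)u\|_w .
\end{equation*}
Each term is then a one-dimensional estimate applied fiberwise. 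For a.e.\ fixed values of the other variables, the classical Jacobi projection bound (e.g.\ Shen--Tang--Wang, Thm.~3.35) gives
\begin{equation*}
\|(I-\pi_j)v\|_{w_j}\le C_{\alpha_j,\beta_j,s_j}(p_j+1)^{-s_j}\|\partial^{s_j}v\|_{w_j(1-x^2)^{s_j}},
\end{equation*}
valid for $p_j\ge s_j-1$. I would derive this from the derivative relation $\partial_x J_n^{\alpha,\beta}\propto J_{n-1}^{\alpha+1,\beta+1}$: iterating $s_j$ times shows that $\partial^{s_j}$ maps the Jacobi expansion into an orthogonal expansion with shifted parameters $(\alpha_j+s_j,\beta_j+s_j)$. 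The norm ratio for mode $n$ is then
\begin{equation*}
\prod_{i<s_j}\lambda_{n-i}^{-1},\qquad \lambda_n\sim n^2,
\end{equation*}
which is bounded by $C(p_j+1)^{-2s_j}$ for $n>p_j$. Finally, squaring the fiberwise bound, integrating it against the remaining weight factors, and taking square roots gives the $j$th summand in \eqref{eq:ts-jacobi}.

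\textbf{Main obstacle.} I expect the hardest step to be the one-dimensional Jacobi constant. The difficulty is making the eigenvalue-ratio bound uniform for all $n\ge p_j+1$ and checking that it respects the condition $p_j\ge s_j-1$, so that $n-s_j+1\ge1$. A secondary point is justifying the fiberwise application by Fubini in the weighted derivative space. For both of these I would cite the standard one-dimensional theorem rather than rederive it.
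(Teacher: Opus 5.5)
Your proposal is correct and follows the paper's proof essentially step for step. The Fourier parts use the same Parseval tail estimate with $(1+|\boldsymbol k|_2^2)^{m-s}\le(1+K^2)^{m-s}$ and the same union bound over the $d$ axes. The Jacobi part uses the same telescoping identity with commuting $L^2_w$-contractions, followed by a fiberwise one-dimensional estimate and Fubini.

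One small refinement: iterating the derivative identity gives factors at shifted parameters, $\Lambda_{n,s}^{(\alpha,\beta)}=\prod_{\ell=0}^{s-1}(n-\ell)(n+\alpha+\beta+1+\ell)$, not $\prod_{i<s}\lambda_{n-i}$ at fixed parameters. The paper's bound $\Lambda_{n,s}^{(\alpha,\beta)}\ge c_{\alpha,\beta,s}(p+1)^{2s}$ for $n>p\ge s-1$ is exactly the uniformity you flagged as the main obstacle.
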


See the Fourier- and Jacobi- type proofs in Appendix~\ref{app:ts-proofs}. The following rank property identifies what increasing the rank guarantees without assuming a decay rate.

\begin{lemma}[Rank error]\label{lem:ts-rank}
$E_{\mathrm{rank}}(N,r+1)\le E_{\mathrm{rank}}(N,r)$, and the error is zero when $r\ge\operatorname{rank}_{\rm CP}(\widehat{\mathcal U}_N)$. No algebraic or exponential rate in $r$ follows without a solution-specific compressibility assumption.
\end{lemma}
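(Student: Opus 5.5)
The plan is to argue directly from the definition of $E_{\mathrm{rank}}(N,r)$ in \eqref{eq:ts-errors} as an infimum over the set $\mathcal C_r=\{\widehat{\mathcal V}:\operatorname{rank}_{\rm CP}(\widehat{\mathcal V})\le r\}$. For monotonicity, we will show the nesting $\mathcal C_r\subseteq\mathcal C_{r+1}$: any tensor written with at most $r$ rank-one terms is also a sum of at most $r+1$ such terms, either by reading the bound as ``at most'' or by appending the zero rank-one term $0\cdot\mathbf e\circ\cdots\circ\mathbf e$. Since an infimum over a larger set cannot exceed the infimum over a subset, this gives $E_{\mathrm{rank}}(N,r+1)\le E_{\mathrm{rank}}(N,r)$. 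This works even though $\mathcal C_r$ need not be closed, because no attainment of the infimum is used.

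For the exactness claim, set $R=\operatorname{rank}_{\rm CP}(\widehat{\mathcal U}_N)$, which is finite; $R\le\prod_jN_j$ via the standard basis decomposition. By definition, $\widehat{\mathcal U}_N\in\mathcal C_R\subseteq\mathcal C_r$ for every $r\ge R$, using the nesting above iteratively. Choosing $\widehat{\mathcal V}=\widehat{\mathcal U}_N$ gives the value $0$. Since the Frobenius norm is nonnegative, this yields $E_{\mathrm{rank}}(N,r)=0$.

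The third sentence is a negative statement. We will justify it by exhibiting that the first two facts are all that hold uniformly over the admissible class. The planned route has two steps.

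First, show that $\widehat{\mathcal U}_N$ can be an arbitrary tensor in $\mathbb C^{N_1\times\cdots\times N_d}$. Given any coefficient tensor, the finite expansion $u=\sum_{\boldsymbol k}\widehat u_{\boldsymbol k}\Phi_{\boldsymbol k}$ lies in $V_N$ and is smooth, so $P_Nu=u$. Smoothness, or spectral decay as in Lemma~\ref{lem:ts-spectral}, therefore imposes no constraint on the retained tensor.

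Second, exhibit slow rank decay. We will take a tensor whose best rank-$r$ error stays bounded away from zero until $r$ is near its full rank. One candidate is the order-$2$ identity matrix embedded in a single pair of axes, where the Eckart--Young theorem gives $E_{\mathrm{rank}}=\sqrt{N-r}$ for $r<N$. Because $N$ is arbitrary, no prescribed algebraic or exponential rate in $r$ can hold with constants independent of the solution.

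The main obstacle is making this ``no rate'' sentence precise, since it is a remark rather than a quantified claim. I would phrase it as follows: for any function $\varepsilon(r)\to0$, there exist $N$ and $u$ with $E_{\mathrm{rank}}(N,r)>\varepsilon(r)\,\|\widehat{\mathcal U}_N\|_F$ for some $r$. The identity example gives a relative error of $\sqrt{1-r/N}$, which suffices. The monotonicity and exactness parts are routine.
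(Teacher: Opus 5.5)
Your proof of the first two claims is the paper's proof. The rank-$\le r$ feasible set sits inside the rank-$\le r+1$ set, so the infimum cannot increase, and no attainment is needed. Once $r\ge\operatorname{rank}_{\rm CP}(\widehat{\mathcal U}_N)$, the target itself is feasible, so nonnegativity forces the infimum to zero.

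The paper gives no argument for the ``no rate'' sentence; it leaves it as a remark. Your counterexample therefore goes beyond the paper, and two points would tighten it.

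First, for $d\ge3$ you must apply Eckart--Young to a mode unfolding. A tensor of CP rank at most $r$ has a mode-$1$ unfolding of matrix rank at most $r$. The unfolding of the embedded identity has $N$ unit singular values. Hence every such tensor lies at distance at least $\sqrt{N-r}$, and keeping $r$ diagonal rank-one terms attains this.

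Second, your claim that spectral decay ``imposes no constraint'' on $\widehat{\mathcal U}_N$ holds only qualitatively. With fixed constants it does constrain the tensor and does yield rates. Zero out every coefficient with some $|k_j|>K'$. What remains has CP rank at most $(2K'+1)^{d-1}$, since you can take one term per axis-$1$ fiber. The error is at most $C(d,a)Be^{-aK'}$ under the analytic bound of Lemma~\ref{lem:ts-spectral}, or $(1+K'^2)^{-s/2}\|u\|_{H^s}$ under $H^s$ regularity. These are dimension-dependent rates in $r$, uniform in $N$. Your identity example does not contradict them, because its $H^s$ norm grows like $N^{s+1/2}$.

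So the formalization you settle on, normalized by $\|\widehat{\mathcal U}_N\|_F$, is the correct reading. Quantitative regularity must itself count as one of the ``compressibility assumptions'' in the lemma's last sentence.
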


See the proof in Appendix~\ref{app:ts-proofs}. Combining the two approximation terms with the gap left by training gives the overall solution error. The proof applies the triangle inequality in physical space and then in coefficient space, using the synthesis isometry.

\begin{theorem}[Spectral--rank--training decomposition]\label{thm:ts-decomp}  Under the definitions above, for every $\eta>0$ and every trained rank-$r$ tensor $\widehat{\mathcal U}_\theta$ representing $u_\theta$,
\begin{equation}\label{eq:ts-decomp}
  \|u-u_\theta\|_H\le E_{\mathrm{spec}}(N) +E_{\mathrm{rank}}(N,r)+\eta +E_{\mathrm{opt}}(\theta;\widehat{\mathcal V}_\eta).
\end{equation}
Thus increasing $N$ controls only the spectral term; increasing $r$ controls the best separable approximation; training controls the final gap. Joint convergence requires the spectral, rank, and training errors to vanish, with the rank allowed to depend on the spectral resolution.
\end{theorem}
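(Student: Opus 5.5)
The argument is a two-level triangle inequality, with the synthesis isometry between coefficient space and $H$ as the bridge. Let $S:\widehat{\mathcal V}\mapsto \sum_{\boldsymbol k}\widehat v_{\boldsymbol k}\Phi_{\boldsymbol k}$ denote synthesis from $\mathbb C^{N_1\times\cdots\times N_d}$ into $V_N\subset H$. The basis is orthonormal: normalized Fourier modes, or Jacobi polynomials rescaled by their basis norms as stipulated at the start of Section~\ref{sec:ts-theory}. Hence the tensor-product basis $\{\Phi_{\boldsymbol k}\}$ is orthonormal in $H=L^2(\Omega,\mu)$, and Parseval gives $\|S\widehat{\mathcal V}\|_H=\|\widehat{\mathcal V}\|_F$. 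This isometry is the only structural fact needed.

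The first step works in physical space. By definition $u_N=P_Nu=S\widehat{\mathcal U}_N$ and $u_\theta=S\widehat{\mathcal U}_\theta$. The key point is that the trained function lies in $V_N$, because its coefficient tensor is indexed by the retained modes. Then
\[
\|u-u_\theta\|_H\le \|u-u_N\|_H+\|u_N-u_\theta\|_H = E_{\mathrm{spec}}(N)+\|\widehat{\mathcal U}_N-\widehat{\mathcal U}_\theta\|_F .
\]
Since $P_N$ is the orthogonal projector, one could even use Pythagoras to get the sharper bound $\|u-u_\theta\|_H^2=E_{\mathrm{spec}}^2+\|u_N-u_\theta\|_H^2$. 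The plain triangle inequality is enough for the stated form.

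The second step works in coefficient space. Fix $\eta>0$ and take the near-minimizer $\widehat{\mathcal V}_\eta$ of CP rank at most $r$ chosen in the setup, so that $\|\widehat{\mathcal U}_N-\widehat{\mathcal V}_\eta\|_F\le E_{\mathrm{rank}}(N,r)+\eta$. Such a tensor exists by the definition of the infimum, whether or not the infimum is attained. The infimum is needed because the rank-$r$ CP set may fail to be closed. Inserting $\widehat{\mathcal V}_\eta$ gives
\[
\|\widehat{\mathcal U}_N-\widehat{\mathcal U}_\theta\|_F\le \|\widehat{\mathcal U}_N-\widehat{\mathcal V}_\eta\|_F+\|\widehat{\mathcal V}_\eta-\widehat{\mathcal U}_\theta\|_F\le E_{\mathrm{rank}}(N,r)+\eta+E_{\mathrm{opt}}(\theta;\widehat{\mathcal V}_\eta).
\]
Combining this with the first step yields \eqref{eq:ts-decomp}.

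The interpretive sentences follow from inspecting each term. $E_{\mathrm{spec}}$ depends only on $N$ and is controlled by Lemma~\ref{lem:ts-spectral}. $E_{\mathrm{rank}}$ is nonincreasing in $r$ and vanishes once $r\ge\operatorname{rank}_{\rm CP}(\widehat{\mathcal U}_N)$ by Lemma~\ref{lem:ts-rank}; that CP rank can grow with $N$, which is why $r$ must be allowed to depend on $N$. $\eta$ is arbitrary, and $E_{\mathrm{opt}}$ is the training gap.

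There is no real obstacle here. The only point needing care is the isometry claim, which requires that the synthesis map use orthonormal rather than unnormalized Jacobi coefficients, that the Fourier measure be normalized, and that the time-dependent case be read at a fixed $t$ (or with $H$ replaced by its space–time analogue). I would state the result at a fixed time, where the argument above applies verbatim, and note that this is the reading adopted.
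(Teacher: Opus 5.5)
Your proposal is correct and matches the paper's proof. Both use a triangle inequality in $H$ through $u_N=P_Nu$, then the synthesis isometry to pass to $\|\widehat{\mathcal U}_N-\widehat{\mathcal U}_\theta\|_F$, then a second triangle inequality in coefficient space through the near-minimizer $\widehat{\mathcal V}_\eta$. Your added remarks on orthonormalization and the fixed-time reading are sensible clarifications rather than departures from the argument.
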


See the proof in Appendix~\ref{app:ts-proofs}.

\subsection{Residual Stability and Spectral Weighting}
The preceding decomposition measures a training gap in solution coefficients. To relate the PDE residual used in Section~\ref{sec:ts-method} to solution error, we next apply a stability inequality to the solution error and distinguish a complete residual from sampled training coefficients.

\begin{theorem}[Full residual and sampled-loss distinction]\label{thm:ts-stability}
Let $A:X_0\to Y$ be linear and suppose $\|v\|_X\le C_{\rm stab}\|Av\|_Y$ for every admissible error $v\in X_0$. If $Au=f$ and $u_\theta$ satisfies the same essential constraints, then
\begin{equation}\label{eq:ts-full-res}
  \|u-u_\theta\|_X\le C_{\rm stab}\|Au_\theta-f\|_Y.
\end{equation}
For a stationary residual, write $R=Au_\theta-f$ and let $\mathcal R_{\boldsymbol k}$ be its coefficients in a finite orthonormal residual basis. For weights $0<w_{\min}\le w_{\boldsymbol k}\le w_{\max}$ and a \emph{complete} mode set $\mathcal K$ of size $n$, the full-mode version of the preconditioned loss in Section~\ref{sec:ts-method}, $\mathcal L_{\mathrm{PDE}}^{\mathrm{prec,full}}=n^{-1}\sum_{\boldsymbol k\in\mathcal K} |w_{\boldsymbol k}\mathcal R_{\boldsymbol k}|^2$, implies
\begin{equation}\label{eq:ts-loss}
  \|u-u_\theta\|_X\le C_{\rm stab} \bigl(w_{\min}^{-1}\sqrt{n\mathcal L_{\mathrm{PDE}}^{\mathrm{prec,full}}} +\|R_{\mathcal K^c}\|_Y\bigr),
\end{equation}
provided the coefficient split is orthogonal in $Y$. For an empirical subset $\mathcal S\subsetneq\mathcal K$, the analogous loss $\mathcal L_{\mathcal S} =|\mathcal S|^{-1}\sum_{\boldsymbol k\in\mathcal S}|w_{\boldsymbol k}\mathcal R_{\boldsymbol k}|^2$controls only $R_{\mathcal S}$; an independent bound on $R_{\mathcal S^c}$ is required. A space--time sampled average likewise requires quadrature, generalization, or a continuous residual estimate. Boundary and initial condition mismatches must be included unless imposed exactly.
\end{theorem}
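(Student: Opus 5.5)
The proof follows the order of the statement: first the abstract stability estimate, then the coefficient split, and finally the remarks on sampled losses. For \eqref{eq:ts-full-res}, set $v=u-u_\theta$. Since $u_\theta$ satisfies the same essential constraints as $u$, $v$ lies in the admissible error set $X_0$. Linearity gives $Av=Au-Au_\theta=f-Au_\theta=-R$. The hypothesis then yields $\|u-u_\theta\|_X\le C_{\rm stab}\|Av\|_Y=C_{\rm stab}\|Au_\theta-f\|_Y$ at once. The only subtlety is that admissibility of $v$ must be stated as a hypothesis. This explains the closing caveat of the theorem: if boundary or initial data are only penalized, $v\notin X_0$ and the mismatch has to be carried as an extra term.

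For \eqref{eq:ts-loss}, I would decompose $R=R_{\mathcal K}+R_{\mathcal K^c}$, where $R_{\mathcal K}=\sum_{\boldsymbol k\in\mathcal K}\mathcal R_{\boldsymbol k}e_{\boldsymbol k}$ in the orthonormal residual basis $\{e_{\boldsymbol k}\}$. The triangle inequality gives $\|R\|_Y\le\|R_{\mathcal K}\|_Y+\|R_{\mathcal K^c}\|_Y$; orthogonality of the split would even give the sharper Pythagorean bound, but the triangle form is what is stated. By Parseval on the finite orthonormal set, $\|R_{\mathcal K}\|_Y^2=\sum_{\boldsymbol k}|\mathcal R_{\boldsymbol k}|^2$. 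Using $w_{\boldsymbol k}\ge w_{\min}$,
\begin{equation*}
\sum_{\boldsymbol k\in\mathcal K}|\mathcal R_{\boldsymbol k}|^2\le w_{\min}^{-2}\sum_{\boldsymbol k\in\mathcal K}|w_{\boldsymbol k}\mathcal R_{\boldsymbol k}|^2=w_{\min}^{-2}\,n\,\mathcal L_{\mathrm{PDE}}^{\mathrm{prec,full}}.
\end{equation*}
Taking square roots and inserting the result into \eqref{eq:ts-full-res} gives \eqref{eq:ts-loss}. The upper bound $w_{\max}$ is not needed for this direction; it only gives the reverse inequality, which shows that the loss is comparable to $\|R_{\mathcal K}\|_Y^2$.

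The sampled-loss statements are proved by the same computation applied to $\mathcal S$. It shows $\|R_{\mathcal S}\|_Y\le w_{\min}^{-1}\sqrt{|\mathcal S|\mathcal L_{\mathcal S}}$, while $R_{\mathcal S^c}$ does not appear in $\mathcal L_{\mathcal S}$ at all. To show that no bound without an extra assumption is possible, I would give a one-line example. Take $A$ to be the identity on a space spanned by the basis, and let $u_\theta-u$ be a multiple of $e_{\boldsymbol k_0}$ with $\boldsymbol k_0\in\mathcal K\setminus\mathcal S$. Then $\mathcal L_{\mathcal S}=0$ but the error is arbitrary. The space--time remark follows identically, because a sampled time average is a quadrature functional that vanishes on residuals supported away from the nodes.

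The main obstacle is not the computation but the hypotheses. One must ensure the coefficient split is genuinely $Y$-orthogonal, so that Parseval applies to $R_{\mathcal K}$; in the Jacobi case this requires the rescaling to an orthonormal basis noted before Proposition~\ref{prop:ts-cost}. One must also ensure that $v$ is admissible. Both are assumed explicitly in the statement, so I would state them precisely rather than try to derive them.
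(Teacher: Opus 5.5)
Your proposal is correct and follows the paper's proof essentially step for step. It applies the stability inequality to the admissible error, sums $|\mathcal R_{\boldsymbol k}|\le w_{\min}^{-1}|w_{\boldsymbol k}\mathcal R_{\boldsymbol k}|$ over the complete mode set, uses the triangle inequality for $R=R_{\mathcal K}+R_{\mathcal K^c}$, and takes a residual supported on a single omitted mode to show that $\mathcal L_{\mathcal S}$ cannot control $R_{\mathcal S^c}$; your identity-operator example is just a concrete version of that last remark.
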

See the proof in Appendix~\ref{app:ts-proofs}. When the method measures only a projected residual, the relevant target is the Galerkin solution. A discrete stability inequality and the triangle inequality then separate the projected residual from the consistency error.

\begin{proposition}[A projected-residual alternative]\label{prop:ts-discrete}
Suppose $A:\operatorname{dom}(A)\to H$, $V_N\subset\operatorname{dom}(A)$, $f\in H$, and $A_N=P_NA|_{V_N}$ is injective, with $\|z\|_H\le\beta_N^{-1}\|A_Nz\|_H$ for $z\in V_N$. Let $u_N^G\in V_N$ solve $A_Nu_N^G=P_Nf$.  For $v\in V_N$,
\begin{equation}\label{eq:ts-discrete}
  \|u-v\|_H\le\|u-u_N^G\|_H+ \beta_N^{-1}\|P_N(Av-f)\|_H.
\end{equation}
Thus a complete weighted Galerkin residual controls error relative to the discrete solution, with a separate consistency error and a possibly $N,d$-dependent $\beta_N$.
\end{proposition}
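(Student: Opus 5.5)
The plan is to insert the Galerkin solution $u_N^G$ and apply the triangle inequality,
\begin{equation*}
  \|u-v\|_H\le\|u-u_N^G\|_H+\|u_N^G-v\|_H .
\end{equation*}
This leaves only the second term to control, and it will be handled with the discrete stability hypothesis on $A_N$. The key point is that $u_N^G-v$ lies in $V_N$, because both $u_N^G$ and $v$ do. The hypothesis $\|z\|_H\le\beta_N^{-1}\|A_Nz\|_H$ therefore applies with $z=u_N^G-v$, giving
\begin{equation*}
  \|u_N^G-v\|_H\le\beta_N^{-1}\|A_N(u_N^G-v)\|_H .
\end{equation*}

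Next I will rewrite $A_N(u_N^G-v)$ as a projected residual. The map $A_N=P_NA|_{V_N}$ is linear, since $A$ is linear (implicit in this setting) and $P_N$ is the orthogonal projector. Using $A_Nu_N^G=P_Nf$ and $A_Nv=P_NAv$, we get
\begin{equation*}
  A_N(u_N^G-v)=P_Nf-P_NAv=-P_N(Av-f).
\end{equation*}
Taking norms gives $\|u_N^G-v\|_H\le\beta_N^{-1}\|P_N(Av-f)\|_H$. Substituting this into the triangle inequality yields \eqref{eq:ts-discrete}.

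The only point needing care is well-posedness. The inclusion $V_N\subset\operatorname{dom}(A)$ makes $Av$ defined, and $f\in H$ makes $P_Nf$ defined. For the existence of $u_N^G$, injectivity of $A_N$ on the finite-dimensional space $V_N$ implies surjectivity onto $V_N$, which I will note in passing even though the statement already assumes $u_N^G$ exists. I do not expect any real obstacle; the argument is just the triangle inequality combined with discrete stability.

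For the closing interpretation, I will note two things. First, $\|P_N(Av-f)\|_H$ equals the Frobenius norm of the retained residual coefficients, because the basis is orthonormal. With weights bounded below by $w_{\min}$, as in Theorem~\ref{thm:ts-stability}, this norm is at most $w_{\min}^{-1}\sqrt{n\mathcal L^{\mathrm{prec,full}}_{\mathrm{PDE}}}$. Second, the consistency term $\|u-u_N^G\|_H$ does not depend on training, and $\beta_N$ may depend on $N$ and $d$.
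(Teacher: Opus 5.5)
Your proof is correct and matches the paper's argument: insert $u_N^G$ with the triangle inequality, apply discrete stability to the difference of $u_N^G$ and $v$ in $V_N$, and use $A_Nu_N^G=P_Nf$ to identify $A_N(v-u_N^G)=P_N(Av-f)$ (the sign convention for $z$ does not matter). Your closing bound of the projected residual by $w_{\min}^{-1}\sqrt{n\mathcal L^{\mathrm{prec,full}}_{\mathrm{PDE}}}$ is the same as the paper's final remark.
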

See the proof in Appendix~\ref{app:ts-proofs}. For periodic Poisson, Fourier diagonalization makes the stability factor and the effect of spectral weights explicit.  This example also shows when the weighted residual equals the solution error exactly.

\begin{corollary}[Zero-mean periodic Poisson]\label{cor:ts-poisson}
Use normalized Haar measure on $\mathbb T^d$, set the zero mode of $u$ and $u_\theta$ to zero, and let $R=\Delta u_\theta-f$ with $\Delta u=f$. Write $\mathcal R_{\boldsymbol k}=\widehat R_{\boldsymbol k}$ and $e=u_\theta-u$.  Then, for $\boldsymbol k\ne\boldsymbol0$,
\begin{equation}\label{eq:ts-poisson}
  \widehat e_{\boldsymbol k} =-\frac{\mathcal R_{\boldsymbol k}}{|\boldsymbol k|_2^2},\quad \|e\|_{L^2}^2=\sum_{\boldsymbol k\ne\boldsymbol0} \frac{|\mathcal R_{\boldsymbol k}|^2}{|\boldsymbol k|_2^4},\quad \|\nabla e\|_{L^2}^2=\sum_{\boldsymbol k\ne\boldsymbol0} \frac{|\mathcal R_{\boldsymbol k}|^2}{|\boldsymbol k|_2^2}.
\end{equation}
Consequently, ideal weights $w_{\boldsymbol k}=|\boldsymbol k|_2^{-2}$ make the \emph{complete} weighted residual norm equal to the coefficient $L^2$ error. For Jacobi bases, the coefficient-space operator scale provides a preconditioner.
\end{corollary}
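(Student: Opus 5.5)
The argument works entirely in Fourier coefficients on the torus with normalized Haar measure. With this measure, Parseval holds in the form $\|v\|_{L^2}^2=\sum_{\boldsymbol k}|\widehat v_{\boldsymbol k}|^2$, and the characters $e^{\mathrm i\boldsymbol k\cdot\boldsymbol x}$ diagonalize the Laplacian with symbol $-|\boldsymbol k|_2^2$. For this I will use the Fourier convention of Section~\ref{sec:ts-method}, namely $\mathbf D_j=\operatorname{diag}(\mathrm i k_j)$ and $\widehat{\Delta u}_{\boldsymbol k}=-\|\boldsymbol k\|_2^2\widehat u_{\boldsymbol k}$.

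\textbf{Step 1: the error identity.} Since $\Delta u=f$, the residual satisfies
\[
R=\Delta u_\theta-f=\Delta u_\theta-\Delta u=\Delta e .
\]
Taking Fourier coefficients gives $\mathcal R_{\boldsymbol k}=-|\boldsymbol k|_2^2\,\widehat e_{\boldsymbol k}$ for every $\boldsymbol k$. For $\boldsymbol k\ne\boldsymbol 0$ we divide by $|\boldsymbol k|_2^2$, which yields the first identity in \eqref{eq:ts-poisson}. The zero mode needs separate care. By hypothesis $\widehat e_{\boldsymbol 0}=0$, and $\mathcal R_{\boldsymbol 0}=0$ holds automatically, so the zero mode contributes nothing to either side.

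\textbf{Step 2: the two norm identities.} Parseval gives
\[
\|e\|_{L^2}^2=\sum_{\boldsymbol k\ne\boldsymbol 0}|\widehat e_{\boldsymbol k}|^2,
\]
and substituting Step 1 produces $\sum_{\boldsymbol k\ne\boldsymbol 0}|\mathcal R_{\boldsymbol k}|^2/|\boldsymbol k|_2^4$. For the gradient, $\widehat{\partial_j e}_{\boldsymbol k}=\mathrm i k_j\widehat e_{\boldsymbol k}$, so summing Parseval over $j$ gives $\|\nabla e\|_{L^2}^2=\sum_{\boldsymbol k}|\boldsymbol k|_2^2|\widehat e_{\boldsymbol k}|^2$, which equals $\sum_{\boldsymbol k\ne\boldsymbol 0}|\mathcal R_{\boldsymbol k}|^2/|\boldsymbol k|_2^2$.

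\textbf{Step 3: the weighting statement.} Take $w_{\boldsymbol k}=|\boldsymbol k|_2^{-2}$ for $\boldsymbol k\ne\boldsymbol 0$. The zero mode may be excluded or given any weight, since $\mathcal R_{\boldsymbol 0}=0$. Then $|w_{\boldsymbol k}\mathcal R_{\boldsymbol k}|=|\widehat e_{\boldsymbol k}|$, so the complete weighted residual sum over all modes equals $\|e\|_{L^2}^2$.

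Two bookkeeping points must be stated carefully. First, this is the unnormalized sum; the loss $\mathcal L_{\mathrm{PDE}}^{\mathrm{prec,full}}$ of Theorem~\ref{thm:ts-stability} carries an extra factor $n^{-1}$. Second, the sum must run over all modes. If $u_\theta$ is a finite Fourier expansion but $f$ is not band-limited, the residual has modes outside the truncation set, and these must be kept in the sum. This is consistent with the word ``complete'' in the corollary and with Theorem~\ref{thm:ts-stability}.

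\textbf{Remaining points.} The Jacobi sentence is a remark rather than a claim to prove; I will note only that there is no diagonal symbol there, so no exact identity is claimed. The only genuinely delicate point is the regularity needed for term-by-term differentiation in Parseval. I will assume $u,u_\theta\in H^2(\mathbb T^d)$, or interpret $\Delta$ distributionally so that the coefficient identities hold with both sides possibly infinite. Under that assumption every step above is routine.
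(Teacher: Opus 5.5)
Your proposal is correct and follows the paper's proof essentially step for step. You write $R=\Delta e$, obtain $-|\boldsymbol k|_2^2\widehat e_{\boldsymbol k}=\mathcal R_{\boldsymbol k}$ by Fourier diagonalization, apply Parseval on normalized $\mathbb T^d$ to $e$ and $\nabla e$, and note that the weight $|\boldsymbol k|_2^{-2}$ turns each residual coefficient into $-\widehat e_{\boldsymbol k}$. Your extra remarks are accurate refinements that the paper leaves implicit: $\mathcal R_{\boldsymbol 0}=0$ automatically, the identity concerns the unnormalized sum rather than the $n^{-1}$-scaled loss, modes of $R$ outside the truncation set must be included, and the $H^2$ (or distributional) regularity caveat.
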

See the proof in Appendix~\ref{app:ts-proofs}. The same residual principle gives bounds for heat, wave, and selected nonlinear equations once initial mismatches or nonlinear stability conditions are included; see Proposition~\ref{prop:app-heat},\ref{prop:app-wave} and \ref{prop:app-nonlinear} in Appendix~\ref{app:ts-proofs}.

\subsection{Frozen Low-Rank Correction}
We now analyze the frozen corrections introduced in Section~\ref{sec:ts-method}. The first result tracks their total representational capacity by concatenating CP terms.  Let $\widehat{\mathcal U}^{[0]}$ have CP rank at most $r_0$, and let
$\widehat{\mathcal U}^{[m]} = \widehat{\mathcal U}^{[m-1]}+\Delta\widehat{\mathcal U}^{(m)}$ with
$\operatorname{rank}_{\rm CP}(\Delta\widehat{\mathcal U}^{(m)})\le r_m$.

\begin{lemma}[Rank accumulation]\label{lem:ts-rank-sum}
$\operatorname{rank}_{\rm CP}(\widehat{\mathcal U}^{[M]}) \le r_0+\sum_{m=1}^M r_m$. Earlier factors remain fixed during subsequent correction stages.
\end{lemma}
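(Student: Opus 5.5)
The plan is a short induction on the number of correction stages $M$, resting on the subadditivity of CP rank under addition. First, I will establish the basic fact: if $\widehat{\mathcal A}$ and $\widehat{\mathcal B}$ have CP decompositions with $r_a$ and $r_b$ rank-one terms, then concatenating the two lists of terms (amplitudes, spatial factors $\mathbf a^{(j)}_\alpha$, and temporal factors where present) gives a CP decomposition of $\widehat{\mathcal A}+\widehat{\mathcal B}$ with $r_a+r_b$ terms. Since $\operatorname{rank}_{\rm CP}$ is the minimal number of such terms, this yields $\operatorname{rank}_{\rm CP}(\widehat{\mathcal A}+\widehat{\mathcal B})\le r_a+r_b$. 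Because this is a purely algebraic statement about finite sums of outer products, no closedness issue arises here, unlike in the definition of $E_{\rm rank}$.

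Next comes the induction itself. The base case $M=0$ is the hypothesis $\operatorname{rank}_{\rm CP}(\widehat{\mathcal U}^{[0]})\le r_0$. For the step, write $\widehat{\mathcal U}^{[M]}=\widehat{\mathcal U}^{[M-1]}+\Delta\widehat{\mathcal U}^{(M)}$. The inductive hypothesis gives $r_0+\sum_{m<M}r_m$ terms for $\widehat{\mathcal U}^{[M-1]}$, and the hypothesis on the increment gives $r_M$ terms for $\Delta\widehat{\mathcal U}^{(M)}$. Subadditivity then yields the bound $r_0+\sum_{m=1}^M r_m$.

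For time-dependent tensors, I will apply the argument pointwise in $t$ to the spatial tensor $\widehat{\mathcal U}^{[M]}(t)$, absorbing $\lambda_\alpha T_\alpha(t)$ into a scalar. Alternatively, I can apply it to the space--time form~\eqref{eq:ts-spacetime-cp} with the extra $\mathbf c_\alpha$ axis. The concatenation argument is identical in either case.

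The second sentence of the lemma is structural rather than analytic. By the construction of the staged objective, stage $m$ optimizes only $\boldsymbol\theta_m$, the parameters of $\Delta\widehat{\mathcal U}^{(m)}$, while $\widehat{\mathcal U}^{[m-1]}$ enters as a fixed tensor. By induction, all factors from stages $0,\dots,m-1$ therefore coincide with their values at the end of their own stage. The concatenated decomposition thus literally contains the earlier factors unchanged, which is what is asserted.

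The argument has no genuine obstacle. The only point needing care is to state that the bound concerns the explicit concatenated decomposition, so the true CP rank may be strictly smaller through cancellation. The lemma claims only an upper bound.
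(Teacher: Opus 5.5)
Your proof is correct and is essentially the paper's argument: the paper concatenates the at most $r_0$ rank-one terms of $\widehat{\mathcal U}^{[0]}$ with the at most $r_m$ terms of each increment $\Delta\widehat{\mathcal U}^{(m)}$ in a single step, and notes that the minimal CP rank cannot exceed the length of this explicit expansion. That is exactly your subadditivity step unrolled over the induction, and your remarks on the time-dependent forms and on the structural nature of the ``frozen factors'' sentence are consistent with the paper, only more explicit.
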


See the proof in Appendix~\ref{app:ts-proofs}. The next theorem states the extra approximation and optimization conditions that make a correction recurrence possible; its proof iterates a one-stage inequality.

\begin{theorem}[Conditional correction recurrence]\label{thm:ts-correction}
Fix a target coefficient tensor $\widehat{\mathcal U}_N^*$, put $\widehat{\mathcal E}^{[m]} =\widehat{\mathcal U}_N^*-\widehat{\mathcal U}^{[m]}$, and define $\delta_m =\inf_{\operatorname{rank}_{\rm CP}(\widehat{\mathcal V})\le r_m} \|\widehat{\mathcal E}^{[m-1]}-\widehat{\mathcal V}\|_F$. If stage $m$ returns a correction satisfying $\|\widehat{\mathcal E}^{[m]}\|_F\le\delta_m+\varepsilon_m$, then $\|\widehat{\mathcal E}^{[m]}\|_F \le\|\widehat{\mathcal E}^{[m-1]}\|_F+\varepsilon_m$. An attained exact best correction gives non-increase.  If, additionally, $\delta_m\le q_m\|\widehat{\mathcal E}^{[m-1]}\|_F$ with $0\le q_m<1$, then
\begin{equation}\label{eq:ts-recurrence}
  \|\widehat{\mathcal E}^{[M]}\|_F\le \left(\prod_{j=1}^Mq_j\right)\|\widehat{\mathcal E}^{[0]}\|_F +\sum_{m=1}^M\varepsilon_m\prod_{j=m+1}^Mq_j.
\end{equation}
For $q_m\le q<1$ and $\varepsilon_m\le\varepsilon$, this becomes $\|\widehat{\mathcal E}^{[M]}\|_F \le q^M\|\widehat{\mathcal E}^{[0]}\|_F+ \varepsilon(1-q^M)/(1-q)$. The recurrence assumes compressibility of the coefficient error and a stagewise approximation tolerance. For PDE-residual training, the stagewise coefficient-error condition requires a residual-to-error stability estimate. Proposition~\ref{prop:app-stage-objective} in Appendix~\ref{app:ts-proofs} states what a decrease in the training objective alone guarantees.
\end{theorem}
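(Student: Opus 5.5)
The plan is to reduce everything to a one-stage inequality and then iterate it. For stage $m$, compare the returned correction with the trivial candidate $\widehat{\mathcal V}=0$. This candidate is admissible because it has CP rank $0\le r_m$.

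Since $\widehat{\mathcal E}^{[m]}=\widehat{\mathcal E}^{[m-1]}-\Delta\widehat{\mathcal U}^{(m)}$, the quantity $\delta_m$ is the best rank-$r_m$ approximation error of $\widehat{\mathcal E}^{[m-1]}$. Taking $\widehat{\mathcal V}=0$ in the infimum gives $\delta_m\le\|\widehat{\mathcal E}^{[m-1]}\|_F$. Combined with the stagewise hypothesis $\|\widehat{\mathcal E}^{[m]}\|_F\le\delta_m+\varepsilon_m$, this yields $\|\widehat{\mathcal E}^{[m]}\|_F\le\|\widehat{\mathcal E}^{[m-1]}\|_F+\varepsilon_m$. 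If the infimum is attained and the correction is an exact minimizer, then $\varepsilon_m=0$ and the error does not increase. We only need the infimum, not closedness of the rank-$r_m$ CP set, which is why the hypothesis is phrased with $\delta_m$ plus a tolerance.

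Under the additional compressibility hypothesis $\delta_m\le q_m\|\widehat{\mathcal E}^{[m-1]}\|_F$, the one-stage bound sharpens to the affine recursion
\begin{equation*}
e_m\le q_m e_{m-1}+\varepsilon_m,\qquad e_m:=\|\widehat{\mathcal E}^{[m]}\|_F .
\end{equation*}
We then prove \eqref{eq:ts-recurrence} by induction on $M$. The case $M=1$ is the recursion itself. For the inductive step, multiply the bound for $e_{M-1}$ by $q_M\ge0$, which preserves the inequality. The products $\prod_{j=1}^{M-1}q_j$ and $\prod_{j=m+1}^{M-1}q_j$ each gain a factor $q_M$, and adding $\varepsilon_M$ supplies the $m=M$ term, whose empty product equals $1$.

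For the uniform case, use monotonicity in each $q_j\ge0$ and $\varepsilon_m\ge0$ to replace $q_j$ by $q$ and $\varepsilon_m$ by $\varepsilon$. This gives
\begin{equation*}
e_M\le q^M e_0+\varepsilon\sum_{m=1}^M q^{M-m},
\end{equation*}
and the geometric sum equals $(1-q^M)/(1-q)$.

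The remaining sentences of the statement are interpretive remarks rather than claims needing proof. The recurrence only requires the coefficient-error hypotheses. Linking them to residual training would go through Theorem~\ref{thm:ts-stability} or Proposition~\ref{prop:ts-discrete}, and is deferred to Proposition~\ref{prop:app-stage-objective}.

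The main obstacle is not technical. It is making sure that monotonicity is asserted only relative to $\delta_m$ and not to the actual minimizer, given that best CP approximations may fail to exist. The comparison with the zero correction handles this cleanly. The induction is routine once nonnegativity of the $q_j$ is noted.
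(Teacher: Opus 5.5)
Your proposal is correct and follows the paper's proof essentially step for step. You compare with the admissible zero tensor to get $\delta_m\le\|\widehat{\mathcal E}^{[m-1]}\|_F$, iterate the affine recursion $e_m\le q_me_{m-1}+\varepsilon_m$ by induction (multiply by $q_M\ge0$, add $\varepsilon_M$), and sum the geometric series; the nonnegativity $\varepsilon_m\ge0$ you invoke in the uniform case is in fact automatic, since the returned correction has CP rank at most $r_m$ and hence $\|\widehat{\mathcal E}^{[m]}\|_F\ge\delta_m$.
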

See the proof in Appendix~\ref{app:ts-proofs}. Finally, we combine the correction recurrence with the spectral projection error. The same solution can also be assessed through full-residual stability, which supplies an independent a posteriori certificate.

\begin{theorem}[Combined approximation and certification]\label{thm:ts-combined}
Take $\widehat{\mathcal U}_N^*=\widehat{\mathcal U}_N$ in Theorem~\ref{thm:ts-correction}, assume its stage hypotheses, and set $u_{\rm TS}^{[M]}=S_N\widehat{\mathcal U}^{[M]}$ for the synthesis map in Equation~\eqref{eq:app-isometry}.  Then
\begin{equation}\label{eq:ts-combined}
  \|u-u_{\rm TS}^{[M]}\|_H\le E_{\mathrm{spec}}(N) +\left(\prod_{j=1}^Mq_j\right) \|\widehat{\mathcal U}_N-\widehat{\mathcal U}^{[0]}\|_F +\sum_{m=1}^M\varepsilon_m\prod_{j=m+1}^Mq_j.
\end{equation}
If $\widehat{\mathcal U}^{[0]}=\widehat{\mathcal U}_\theta$, the base mismatch can be bounded by $E_{\mathrm{rank}}(N,r_0)+\eta+ E_{\mathrm{opt}}(\theta;\widehat{\mathcal V}_\eta)$ as in Theorem~\ref{thm:ts-decomp}.  Separately, whenever the full-residual assumptions of Theorem~\ref{thm:ts-stability} hold, its a posteriori bound applies to the same $u_{\rm TS}^{[M]}$; the smaller of the two valid bounds may be used.
\end{theorem}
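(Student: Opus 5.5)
The plan is to split the physical-space error at the projection $u_N=P_Nu$ and then transfer the remaining discrete part to coefficient space through the synthesis isometry. Since $P_N$ is the $H$-orthogonal projector and $u_N,\,u_{\rm TS}^{[M]}\in V_N$, the triangle inequality gives
\begin{equation*}
  \|u-u_{\rm TS}^{[M]}\|_H\le\|u-u_N\|_H+\|u_N-u_{\rm TS}^{[M]}\|_H = E_{\mathrm{spec}}(N)+\|S_N(\widehat{\mathcal U}_N-\widehat{\mathcal U}^{[M]})\|_H .
\end{equation*}
One could sharpen this to a Pythagorean identity using orthogonality, but the triangle form suffices. The one-dimensional bases are orthonormal, so the tensor-product basis of $V_N$ is orthonormal in $H$. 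Hence $S_N$, as in Equation~\eqref{eq:app-isometry}, is an isometry from Frobenius norm to $H$. The second term therefore equals $\|\widehat{\mathcal E}^{[M]}\|_F$, with $\widehat{\mathcal U}_N^*=\widehat{\mathcal U}_N$.

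Next, I will invoke Theorem~\ref{thm:ts-correction}. Its stage hypotheses hold by assumption: each stage satisfies $\|\widehat{\mathcal E}^{[m]}\|_F\le\delta_m+\varepsilon_m$, and $\delta_m\le q_m\|\widehat{\mathcal E}^{[m-1]}\|_F$ with $q_m<1$. Under these hypotheses, Equation~\eqref{eq:ts-recurrence} bounds $\|\widehat{\mathcal E}^{[M]}\|_F$ by $(\prod_j q_j)\|\widehat{\mathcal E}^{[0]}\|_F+\sum_m\varepsilon_m\prod_{j>m}q_j$. Since $\widehat{\mathcal E}^{[0]}=\widehat{\mathcal U}_N-\widehat{\mathcal U}^{[0]}$, substituting into the first display yields Equation~\eqref{eq:ts-combined}.

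For the base-mismatch remark, take $\widehat{\mathcal U}^{[0]}=\widehat{\mathcal U}_\theta$ and insert the near-best tensor $\widehat{\mathcal V}_\eta$. The triangle inequality in Frobenius norm then gives
\begin{equation*}
  \|\widehat{\mathcal U}_N-\widehat{\mathcal U}_\theta\|_F\le\|\widehat{\mathcal U}_N-\widehat{\mathcal V}_\eta\|_F+\|\widehat{\mathcal V}_\eta-\widehat{\mathcal U}_\theta\|_F\le E_{\mathrm{rank}}(N,r_0)+\eta+E_{\mathrm{opt}}(\theta;\widehat{\mathcal V}_\eta).
\end{equation*}
This is exactly the coefficient-space step used in Theorem~\ref{thm:ts-decomp}.

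For the certification clause, $u_{\rm TS}^{[M]}$ is a single function. If it satisfies the essential constraints and the stability hypothesis of Theorem~\ref{thm:ts-stability}, then Equation~\eqref{eq:ts-full-res} applies verbatim. Two valid upper bounds on the same quantity imply that their minimum is also a bound. Equation~\eqref{eq:ts-full-res} is stated in $\|\cdot\|_X$, so the minimum is taken when $X=H$, or after norm comparison otherwise.

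The main obstacle is bookkeeping rather than analysis. The isometry step requires the orthonormal normalization, in particular the rescaled Jacobi coefficients. The recurrence step must use the target $\widehat{\mathcal U}_N$ consistently, so that $\delta_m$ is measured against the projected coefficients and not against a PDE-residual surrogate.
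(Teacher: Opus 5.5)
Your proposal is correct and follows the paper's proof essentially step for step. The triangle inequality at $u_N=P_Nu$ together with the synthesis isometry \eqref{eq:app-isometry} reduces the error to $E_{\mathrm{spec}}(N)+\|\widehat{\mathcal E}^{[M]}\|_F$, then \eqref{eq:ts-recurrence} is inserted, and the base mismatch is bounded by the same two coefficient-space triangle inequalities as in Theorem~\ref{thm:ts-decomp}. Your remark that taking the smaller of the two bounds is only immediate when $X=H$, or after comparing the two norms, is a point of care that the paper leaves implicit.
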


See the proof in Appendix~\ref{app:ts-proofs}. The Fourier and Jacobi estimates use classical spectral results \cite{trefethen2000spectral,shen2011spectral}, and the CP properties follow standard tensor analysis \cite{kolda2009tensor}. The additional stability and de-aliasing results in Appendix~\ref{app:ts-proofs} specify the assumptions used for those cases.

\section{Numerical Experiments}\label{sec:experiments}
In this section, we systematically evaluate TensorSpectra on a range of linear and nonlinear PDE benchmarks, with particular emphasis on solution accuracy and scalability to high dimensions.

\begin{table}[h]
\centering
\caption{Comparison of relative $L^2$ errors among different physics-informed neural PDE solvers on high-dimensional linear and middle-dimensional nonlinear PDE benchmarks. All results are reported as mean ± standard deviation over three random seeds 100, 200 and 300. \textbf{Bold} values indicate the best performance.}
\label{tab:main_results}
\small
\begin{adjustbox}{max width=\linewidth}
\begin{tabular}{c|c|ccccc|c}
\toprule
PDE & $d$ &
PINN &
SPINN &
PirateNet &
SINN &
TNN &
\textbf{TensorSpectra}
\\
\midrule

\multirow{6}{*}{Poisson}
& 2
& 1.46e-03 ± 3.64e-04
& 3.61e-03 ± 6.33e-04
& 1.32e-03 ± 4.44e-05
& 3.03e-04 ± 9.20e-05
& 1.34e-02 ± 1.79e-03
& \textbf{5.24e-08 ± 1.08e-09}
\\

& 5
& 2.91e-02 ± 3.36e-03
& 5.41e-03 ± 1.34e-03
& 3.10e-03 ± 2.30e-03
& 1.46e-04 ± 2.80e-05
& 7.92e-03 ± 9.92e-04
& \textbf{6.98e-08 ± 8.59e-09}
\\

& 10
& 2.78e-02 ± 2.00e-03
& 1.02e-02 ± 4.06e-03
& 4.16e-02 ± 1.82e-03
& 1.14e-03 ± 8.84e-04
& 1.05e-02 ± 2.00e-03
& \textbf{7.57e-08 ± 4.31e-09}
\\
& 30
& 9.99e-01 ± 9.74e-04
& >1
& >1
& 1.14e-04 ± 2.07e-05
& 2.94e-02 ± 5.12e-03
& \textbf{9.39e-08 ± 2.52e-09}
\\
& 50
& 9.98e-01 ± 4.58e-04
& >1
& >1
& 2.97e-04 ± 7.00e-06
& >1
& \textbf{1.16e-07 ± 8.89e-09}
\\
& 100
& 9.98e-01 ± 1.66e-03
& >1
& >1
& 3.05e-04 ± 4.41e-05
& >1
& \textbf{9.06e-08 ± 1.52e-08}
\\

\midrule

\multirow{6}{*}{Heat}
& 2
& 6.49e-04 ± 1.09e-04
& 1.34e-03 ± 3.73e-05
& 2.54e-03 ± 1.20e-03
& 1.01e-04 ± 2.01e-05
& 9.81e-01 ± 1.98e-02
& \textbf{3.84e-05 ± 1.80e-05}
\\
& 5
& 4.46e-03 ± 7.00e-04
& 3.15e-03 ± 1.29e-03
& 1.36e-03 ± 3.36e-05
& 1.79e-04 ± 8.05e-05
& 8.94e-01 ± 1.32e-01
& \textbf{2.40e-05 ± 8.41e-06}
\\
& 10
& 8.56e-03 ± 1.53e-03
& 1.10e-02 ± 1.96e-03
& 9.04e-02 ± 1.25e-02
& 2.81e-04 ± 4.83e-05
& 9.27e-01 ± 5.92e-02
& \textbf{2.26e-05 ± 1.63e-06}
\\
& 30
& 2.60e-02 ± 8.86e-03
& >1
& 9.99e-01 ± 1.32e-04
& 1.51e-03 ± 3.14e-04
& >1
& \textbf{6.56e-05 ± 1.44e-05}
\\
& 50
& 4.40e-02 ± 6.73e-04
& >1
& 9.98e-01 ± 3.06e-04
& 4.35e-03 ± 8.35e-04
& >1
& \textbf{1.04e-04 ± 2.24e-05}
\\
& 100
& 7.88e-02 ± 4.82e-03
& >1
& 9.98e-01 ± 6.73e-04
& 1.02e-02 ± 2.08e-03
& >1
& \textbf{3.82e-04 ± 6.40e-05}
\\

\midrule

\multirow{6}{*}{Wave}
& 2
& 9.03e-04 ± 1.74e-04
& 3.29e-03 ± 8.75e-04
& 1.68e-03 ± 3.14e-04
& 3.82e-02 ± 1.82e-03
& 5.52e-03 ± 1.68e-03
& \textbf{6.04e-05 ± 1.17e-05}
\\
& 5
& 4.59e-03 ± 1.86e-04
& 3.60e-03 ± 1.66e-03
& 6.15e-03 ± 3.18e-03
& 6.91e-02 ± 3.23e-03
& 6.01e-03 ± 7.12e-04
& \textbf{4.26e-05 ± 8.59e-06}
\\
& 10
& 1.49e-02 ± 5.68e-03
& 1.76e-02 ± 3.86e-03
& 4.51e-01 ± 3.21e-01
& 3.01e-01 ± 1.46e-01
& 1.01e-02 ± 1.07e-03
& \textbf{5.45e-05 ± 1.16e-05}
\\
& 30
& 2.54e-02 ± 1.92e-03
& 9.86e-01 ± 1.76e-03
& 9.98e-01 ± 3.15e-04
& 9.75e-01 ± 1.41e-03
& 1.48e-02 ± 2.54e-03
& \textbf{3.25e-04 ± 1.57e-04}
\\
& 50
& 4.65e-02 ± 1.35e-03
& >1
& 9.99e-01 ± 2.31e-04
& 9.79e-01 ± 1.00e-03
& 3.75e-02 ± 6.66e-04
& \textbf{8.82e-04 ± 3.12e-04}
\\
& 100
& 8.81e-02 ± 4.98e-03
& >1
& 9.99e-01 ± 4.48e-04
& 9.66e-01 ± 6.11e-03
& 6.98e-02 ± 2.23e-03
& \textbf{6.31e-03 ± 3.72e-03}
\\
\midrule

\multirow{3}{*}{AC}
& 2
& 2.57e-06 ± 1.97e-07
& 2.92e-06 ± 9.52e-07
& 1.12e-06 ± 2.38e-07
& 2.16e-06 ± 2.37e-07
& 1.24e-05 ± 1.97e-06
& \textbf{2.46e-08 ± 1.18e-08}
\\
& 5
& 1.05e-05 ± 1.68e-06
& 7.39e-06 ± 3.40e-06
& 5.48e-06 ± 1.01e-06
& 3.71e-06 ± 6.87e-07
& 1.53e-05 ± 1.30e-06
& \textbf{2.80e-07 ± 1.38e-07}
\\
& 10
& 1.71e-04 ± 1.09e-04
& 2.02e-05 ± 2.03e-05
& 4.16e-04 ± 1.97e-04
& 1.85e-05 ± 1.58e-05
& 5.01e-05 ± 1.24e-05
& \textbf{3.57e-07 ± 7.02e-08}
\\
\midrule
\multirow{3}{*}{Reaction}
& 2
& 9.31e-04 ± 8.35e-05
& 1.18e-03 ± 1.04e-03
& 8.62e-01 ± 6.09e-01
& 1.17e-01 ± 1.29e-03
& 1.09e-02 ± 6.31e-03
& \textbf{2.31e-04 ± 2.61e-05}
\\
& 5
& >1
& 1.01e-02 ± 2.06e-03
& >1
& 4.07e-01 ± 7.30e-02
& 3.78e-02 ± 2.35e-02
& \textbf{4.23e-03 ± 4.64e-04}
\\
& 10
& >1
& 4.04e-01 ± 1.06e-01
& >1
& 4.32e-01 ± 7.56e-02
& 2.42e-01 ± 1.54e-01
& \textbf{3.95e-03 ± 7.38e-04}
\\
\bottomrule
\end{tabular}
\end{adjustbox}
\end{table}

\paragraph{Benchmarks, baselines and implementation details.}
We evaluate TensorSpectra on five representative PDE benchmarks comprising the Poisson, heat, wave, steady-state Allen--Cahn, and nonlinear reaction equations, covering high-dimensional linear and middle-dimensional nonlinear problems. We compare against five baselines spanning different methodological paradigms, including PINN \cite{raissi2019physics}, separable and tensorized methods (SPINN \cite{cho2023separable} and TNN \cite{wang2024solving}), the architecture-enhanced PirateNet \cite{wang2024piratenets}, and the spectral-domain solver SINN \cite{yu2026spectral}. Experiments with $d<50$ are conducted on a single NVIDIA RTX 4090 GPU (24 GB), while those with $d\geq 50$ use a single NVIDIA A100 GPU (80 GB). Our implementation is based on JAX with CUDA 12, together with Equinox and Optax, and Adam \cite{kingma2014adam} is adopted as the default optimizer. Benchmark definitions, baseline descriptions, and detailed experimental configurations are provided in Sections~\ref{sec:linear bench} and~\ref{sec:nonlinear bench}, Appendix~\ref{sec:baseline methods}, and Appendix~\ref{sec:ex_config} respectively.

\begin{figure}[h]
    \centering
    \includegraphics[width=0.8\textwidth]{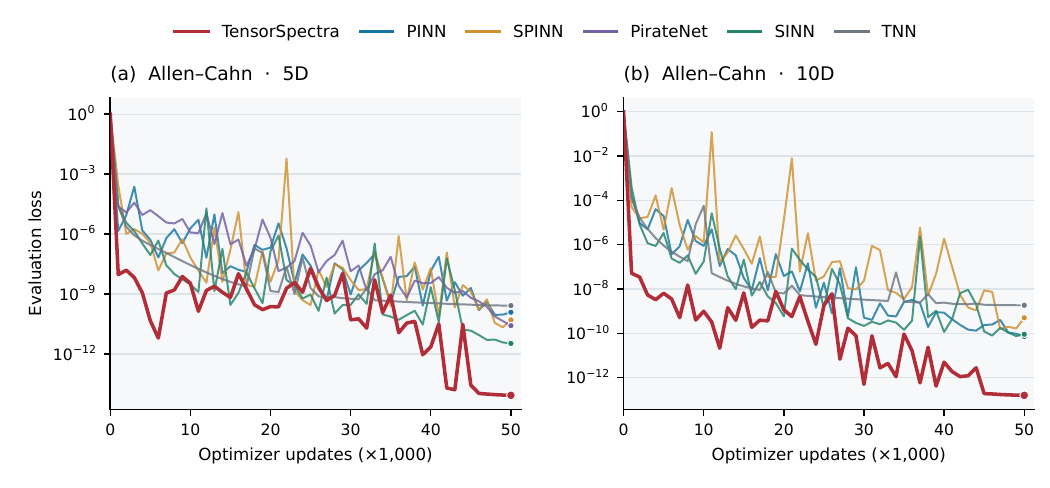}
    \caption{Optimization trajectories on nonlinear PDE benchmarks. Evaluation loss versus optimizer updates for (a) 5D Allen--Cahn and (b) 10D Allen--Cahn. TensorSpectra consistently converges to lower evaluation loss than the competing baselines.}
    \label{fig:loss}
\end{figure}


\paragraph{Evaluation metrics.}
We use the relative $L^2$ error to evaluate solution accuracy. For time-independent and time-dependent PDEs respectively, we define

\begin{equation}
  \text{Rel.} L^2  = \frac{\| u_{\text{pred}} - u_{\text{true}} \|_{L^2(\Omega)}}{\| u_{\text{true}} \|_{L^2(\Omega)}}, \quad \text{Rel.} L^2 (T)  = \frac{\| u_{\text{pred}} (\cdot,T) - u_{\text{true}} (\cdot,T) \|_{L^2(\Omega)}}{\| u_{\text{true}} (\cdot,T) \|_{L^2(\Omega)}}.
\end{equation}
For all time-dependent problems, we report the relative $L^2$ error at the terminal time $T$.


\paragraph{Main results.}
Table~\ref{tab:main_results} shows that TensorSpectra consistently achieves the lowest relative $L^2$ error across all reported benchmarks. On the linear PDEs, TensorSpectra maintains high accuracy up to
$d=100$, achieving errors of $9.06\times10^{-8}$, $3.82\times10^{-4}$ and $6.31\times10^{-3}$ on the Poisson, heat and wave equations respectively, while most baselines exhibit substantial accuracy degradation. The linear-benchmark errors vary non-monotonically with dimension, initially decreasing before rising at higher dimensions. On the nonlinear Allen--Cahn and reaction benchmarks, TensorSpectra also consistently outperforms the competing methods, demonstrating its effectiveness beyond linear PDEs, and Figure~\ref{fig:loss} shows the training loss curves on nonlinear problems. Furthermore, Appendix~\ref{sec:comp cost} reports the computational cost per training update on the 30D wave benchmark, showing that TensorSpectra achieves high accuracy with substantially lower GFLOPs than most baselines.

\paragraph{Interpretability of the learned CP representation.}
Beyond prediction accuracy, the learned CP representation offers a direct view of how TensorSpectra organizes information across spectral coordinates and rank components. Figure~\ref{fig:cp-spectral-factors} visualizes the learned factors along representative coordinate dimensions. The real-valued factors are predominantly concentrated in low-frequency modes, while the imaginary factors exhibit highly consistent patterns across both ranks and dimensions. These factor-wise visualizations reveal the internal spectral organization of the CP approximation without requiring reconstruction of the full high-dimensional coefficient tensor. Figure~\ref{fig:cp-rank-importance} further summarizes the relative contribution of individual CP components through the normalized amplitudes of the stored real and imaginary parts. Most of the representation mass is concentrated in only a few dominant components, whereas the remaining components contribute marginally. This concentration provides empirical evidence that TensorSpectra recovers an effectively low-rank structure in the spectral coefficient space.

\begin{figure}[t]
    \centering
    \includegraphics[width=\textwidth]{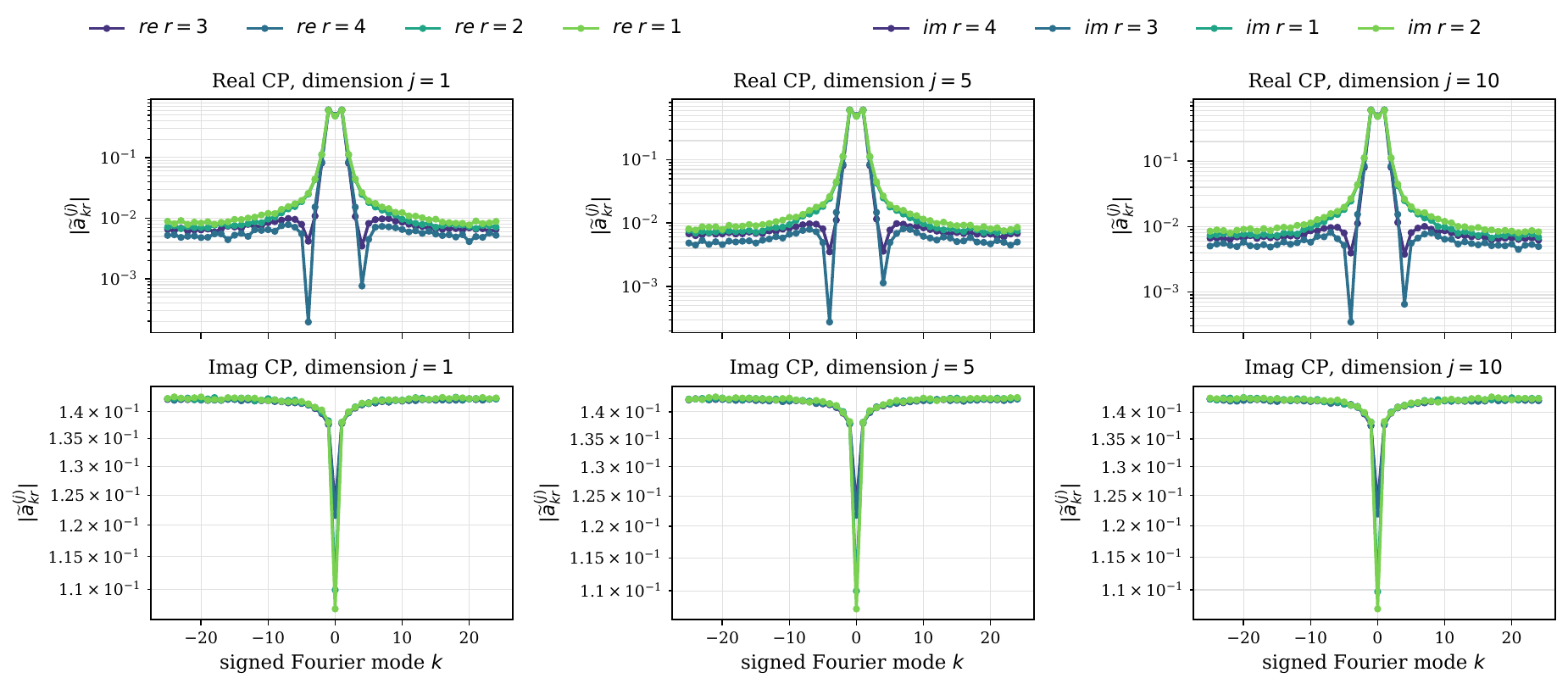}
    \caption{Magnitudes of the learned real and imaginary CP spectral factors
    over signed Fourier modes for representative dimensions $j=1$, $5$ and
    $10$. Each curve corresponds to one CP rank component.}
    \label{fig:cp-spectral-factors}
\end{figure}

\begin{figure}[t]
    \centering
    \includegraphics[width=0.6\linewidth]{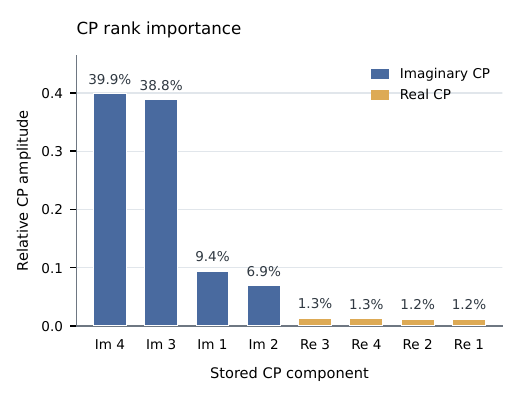}
    \caption{Relative importance of the learned CP components, measured by the normalized component amplitudes $\lambda_r/\sum_s\lambda_s$. Components are labeled by their real or imaginary  part and stored rank index.}
    \label{fig:cp-rank-importance}
\end{figure}

\subsection{High-Dimensional Linear PDE Benchmarks}
\label{sec:linear bench}
We consider the Poisson, heat and wave equations as three canonical linear PDE benchmarks. Together, they cover elliptic, parabolic and hyperbolic PDEs respectively, and therefore provide a representative testbed for evaluating the accuracy and dimensional scalability of TensorSpectra across fundamentally different classes of differential operators and solution behaviors.

\paragraph{Poisson equation.}
We first consider the $d$-dimensional Poisson equation with periodic boundary conditions as below,
\begin{equation}
  \Delta u(\boldsymbol{x}) = f(\boldsymbol{x}),
  \quad \boldsymbol{x} \in [0,2\pi]^d.
\end{equation}
The forcing term is constructed from
\begin{equation}
  f_p(\boldsymbol{x}) = \frac{1}{\pi^2} + \frac{1}{130|\mathcal{D}|} \sum_{i\in\mathcal{D}} \cosh\!\left(2(x_i-\pi)\right),
\end{equation}
where $\mathcal{D}\subseteq\{1,\ldots,d\}$ denotes the set of active dimensions. Then we retain the valid Fourier coefficients of $f_p$ and set the zero-frequency coefficient to zero to satisfy the compatibility condition of the periodic Poisson equation. The resulting forcing and reference solution are reconstructed as
\begin{equation}
  f(\boldsymbol{x})= \sum_{\mathbf{k}\in\mathcal{K}} \operatorname{Re} \left( \widehat{f}_{\mathbf{k}} e^{\mathrm{i}\mathbf{k}\cdot\boldsymbol{x}} \right), \quad u(\boldsymbol{x})= \sum_{\mathbf{k}\in\mathcal{K}} \operatorname{Re} \left( \widehat{u}_{\mathbf{k}} e^{\mathrm{i}\mathbf{k}\cdot\boldsymbol{x}} \right),
  \quad \widehat{u}_{\mathbf{k}} = -\frac{\widehat{f}_{\mathbf{k}}} {\|\mathbf{k}\|_2^2}.
\end{equation}
We evaluate all methods for $d\in\{2,5,10,30,50,100\}$, providing a controlled benchmark for examining how solution accuracy scales with dimensionality.

\paragraph{Heat equation.}
We next consider the $d$-dimensional heat equation with periodic boundary conditions as below,
\begin{equation}
  \frac{\partial u}{\partial t}(\boldsymbol{x},t) = \Delta u(\boldsymbol{x},t),
  \quad (\boldsymbol{x},t)\in[0,2\pi]^d\times[0,0.1].
\end{equation}
The initial condition and reference solution are constructed from the solution of the Poisson problem introduced above,
\begin{equation}
  u(\boldsymbol{x},0) = \sum_{\mathbf{k}\in\mathcal{K}} \operatorname{Re} \left( \widehat{u}_{\mathbf{k}} e^{\mathrm{i}\mathbf{k}\cdot\boldsymbol{x}} \right), \quad u(\boldsymbol{x},t) = \sum_{\mathbf{k}\in\mathcal{K}} \operatorname{Re} \left( \widehat{u}_{\mathbf{k}} e^{-\|\mathbf{k}\|_2^2t} e^{\mathrm{i}\mathbf{k}\cdot\boldsymbol{x}} \right),
\end{equation}
where $\widehat{u}_{\mathbf{k}}$ denotes the corresponding Fourier coefficient. We again consider $d\in\{2,5,10,30,50,100\}$, allowing us to evaluate the dimensional scalability of different methods on a time-dependent parabolic PDE.

\paragraph{Wave equation.}
Finally, we consider a $d$-dimensional wave equation constructed using the same spatial spectral structure as the preceding Poisson and heat benchmarks,
\begin{equation}
  \frac{\partial^2 u}{\partial t^2}(\boldsymbol{x},t) = \Delta u(\boldsymbol{x},t),
  \quad (\boldsymbol{x},t)\in[0,2\pi]^d\times[0,0.1].
\end{equation}
We impose periodic boundary conditions and use the solution of the Poisson benchmark as the initial displacement, while setting the initial velocity to zero,
\begin{equation}
  u(\boldsymbol{x},0) = \sum_{\mathbf{k}\in\mathcal{K}} \operatorname{Re} \left( \widehat{u}_{\mathbf{k}} e^{\mathrm{i}\mathbf{k}\cdot\boldsymbol{x}} \right),\quad \frac{\partial u}{\partial t}(\boldsymbol{x},0) =0.
\end{equation}
Since the Fourier modes diagonalize the Laplace operator, each spatial mode evolves independently according to a harmonic oscillator. The corresponding reference solution is therefore
\begin{equation}
  u(\boldsymbol{x},t) = \sum_{\mathbf{k}\in\mathcal{K}} \operatorname{Re} \left( \widehat{u}_{\mathbf{k}} \cos\!\left(\|\mathbf{k}\|_2 t\right) e^{\mathrm{i}\mathbf{k}\cdot\boldsymbol{x}} \right).
\end{equation}
As in the Poisson and heat benchmarks, we consider $d\in\{2,5,10,30,50,100\}$. In contrast to the dissipative dynamics of the heat equation, the wave equation preserves oscillatory modes over time, providing a complementary test of TensorSpectra on high-dimensional hyperbolic dynamics.

\subsection{Middle-Dimensional Nonlinear PDE Benchmarks}
\label{sec:nonlinear bench}
We further evaluate TensorSpectra on two middle-dimensional nonlinear PDE benchmarks, including a steady-state Allen--Cahn equation and a nonlinear reaction equation. These problems introduce nonlinear interactions that destroy the simple mode-wise separability available in linear PDEs, providing a more challenging setting for assessing the robustness of the proposed low-rank spectral formulation. To further demonstrate the spectral extensibility of TensorSpectra beyond Fourier representations, we employ Legendre polynomial basis mentioned in Appendix~\ref{sec:Jacobi basis} for all nonlinear benchmarks in this section.

\paragraph{Steady-state Allen--Cahn equation.}
We consider the $d$-dimensional steady-state Allen--Cahn equation on $\Omega = [-1,1]^d$,
\begin{equation}
  -\varepsilon^2\Delta u(\boldsymbol{x}) +u^3(\boldsymbol{x})-u(\boldsymbol{x})= f(\boldsymbol{x}), \quad \boldsymbol{x}\in \Omega,
\end{equation}
subject to the homogeneous Dirichlet boundary condition
\begin{equation}
  u(\boldsymbol{x})=0, \quad \boldsymbol{x}\in\partial\Omega.
\end{equation}
We adopt the manufactured solution
\begin{equation}
  u(\boldsymbol{x}) = \prod_{j=1}^{d}(1-x_j^2),
\end{equation}
from which the forcing term is obtained analytically as
\begin{equation}
  f(\boldsymbol{x}) = 2\varepsilon^2 \sum_{j=1}^{d} \prod_{k\neq j}(1-x_k^2) + \prod_{j=1}^{d}(1-x_j^2)^3 - \prod_{j=1}^{d}(1-x_j^2).
\end{equation}
We set $\varepsilon=0.5$ and consider middle-dimensional cases. In contrast to the Fourier-based linear benchmarks, the solution is represented using an orthonormal Legendre basis, while the Dirichlet condition is imposed exactly through the factor-wise construction $(1-x_j^2)q_j(x_j)$. The cubic reaction term generates nonlinear interactions among spectral modes, making this benchmark a direct test of TensorSpectra's ability to perform dealiased nonlinear operations while preserving a low-rank spectral representation.

\paragraph{Nonlinear reaction equation.}
We further consider the logistic reaction equation
\begin{equation}
  \frac{\partial u}{\partial t}(\boldsymbol{x},t) - \rho\,u(\boldsymbol{x},t) \left(1-u(\boldsymbol{x},t)\right) = 0,
  \quad (\boldsymbol{x},t) \in [0,2\pi]^d\times[0,1],
\end{equation}
with the Gaussian initial condition
\begin{equation}
  u(\boldsymbol{x},0) = h(\boldsymbol{x}) = \exp\!\left( -\frac{ \|\boldsymbol{x}-\pi\boldsymbol{1}\|_2^2 }{ 2(\pi/4)^2 } \right).
\end{equation}
The initial profile is separable across spatial dimensions,
\begin{equation}
  h(\boldsymbol{x}) = \prod_{j=1}^{d} \exp\!\left( -\frac{ (x_j-\pi)^2 }{ 2(\pi/4)^2 } \right),
\end{equation}
and the corresponding exact solution is
\begin{equation}
  u(\boldsymbol{x},t) = \frac{ h(\boldsymbol{x})e^{t} }{ h(\boldsymbol{x})e^{ t} + 1-h(\boldsymbol{x}) }.
\end{equation}
To evaluate TensorSpectra beyond Fourier representations, both the spatial coordinates and time are expanded in shifted orthonormal Legendre bases, with time treated as an additional tensor dimension. The Gaussian initial condition is deterministically projected onto the Legendre basis and imposed through the hard construction $u(\boldsymbol{x},t)=h_K(\boldsymbol{x})+t\,v(\boldsymbol{x},t)$, while the quadratic reaction term is evaluated through dealiased low-rank spectral multiplication. This benchmark therefore tests whether TensorSpectra can preserve a compact spectral representation under nonlinear temporal dynamics without relying on physical-space collocation.

\section{Ablation Study and Hyperparameter Analysis}\label{sec:ablation}
In this section, we conduct ablation studies and hyperparameter analyses to examine the contribution of the key components in TensorSpectra and to assess the sensitivity of the method to important design parameters. We progressively evaluate the effects of residual correction and spectral weighting, and further study how the tensor rank and temporal polynomial degree influence solution accuracy.

\subsection{Ablation Study}

\paragraph{TensorSpectra component ablation.}
We progressively add the proposed components to the rank 8 backbone. For a fair comparison, the backbone is trained for 50k epochs, while the correction-based variants use 25k epochs for the base stage followed by 25k epochs for the correction stage, resulting in the same total training budget. All results are averaged over three random seeds 100, 200 and 300. Figure~\ref{fig:ablation-progressive-reduction} shows that the rank 8 backbone achieves a relative $L^2$ error of $3.96\times10^{-3}$. Introducing a rank 4 residual correction reduces the error to $3.68\times10^{-3}$, while further incorporating the spectral weighting $w$ yields the lowest mean error of $2.68\times10^{-3}$. This corresponds to an approximately $32\%$ reduction relative to the backbone. 

\begin{figure}[h]
    \centering
    \includegraphics[width=0.66\linewidth]{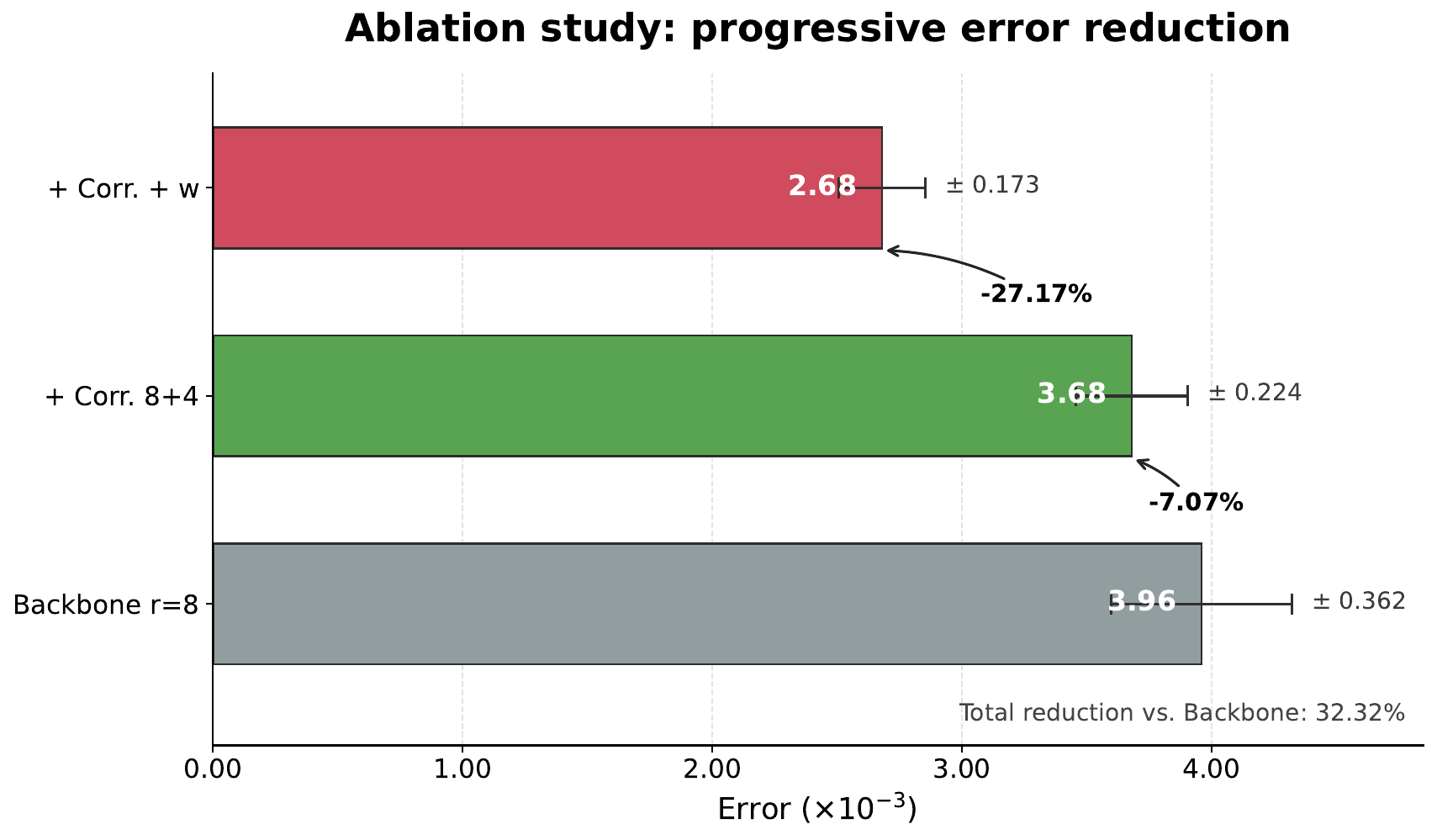}
    \caption{Ablation study of TensorSpectra on the contribution of residual correction and spectral weighting. Starting from a rank 8 backbone, we progressively add a rank 4 correction stage and the spectral weighting $w$. Bars report the mean relative $L^2$ error over three random seeds, and error bars indicate the corresponding standard deviation. All configurations use the same total training budget of 50k epochs.}
    \label{fig:ablation-progressive-reduction}
\end{figure}

\paragraph{Compatibility with LoRFS.}
Table~\ref{tab:lorfs_reaction} compares TensorSpectra with LoRFS on the high-dimensional Reaction benchmark under the same problem setting described in Section~\ref{sec:nonlinear bench}. To demonstrate the flexibility of our spectral framework, we evaluate TensorSpectra using Legendre, Chebyshev, and Jacobi polynomial bases. All three variants outperform the Fourier-based LoRFS in both rMAE and rRMSE, with the Legendre basis achieving the best results of $3.62\times10^{-3}$ and $2.36\times10^{-3}$, corresponding to improvements of $91.15\%$ and $71.22\%$, respectively. Unlike LoRFS, which relies on Fourier expansions, TensorSpectra accommodates different orthogonal polynomial bases within a unified low-rank spectral framework. This flexibility also facilitates its application to bounded non-periodic problems without requiring periodic extensions. LoRFS results are taken from the original paper, and TensorSpectra uses the same benchmark configuration.

\begin{table}[h]
\centering
\caption{Comparison with LoRFS on the high-dimensional Reaction benchmark ($d=9$). \textbf{Bold} values indicate the best performance and \colorbox{green!20}{green} cells report the percentage improvement over the \underline{underlined} LoRFS value for each metric, using the best TensorSpectra basis. All the experiments are repeated over five random seeds and the mean ± standard deviation is reported.}
\label{tab:lorfs_reaction}
\small
\begin{tabular}{lccc}
\toprule
Method & Basis & rMAE & rRMSE \\
\midrule
LoRFS
& Fourier 
& \underline{4.09e-02 ± 4.08e-03} 
& \underline{8.20e-03 ± 5.00e-04} \\

TensorSpectra 
& Legendre 
& \textbf{3.62e-03 ± 8.76e-04} 
& \textbf{2.36e-03 ± 1.15e-03} \\

TensorSpectra 
& Chebyshev
& 4.69e-03 ± 7.90e-04
& 2.93e-03 ± 1.40e-04\\

TensorSpectra 
& Jacobi $(\alpha=0.5, \beta=1.5)$
& 9.82e-03 ± 6.73e-03 
& 5.66e-03 ± 3.49e-03 \\

\midrule
Best improvement 
& -- 
& \cellcolor{green!20}91.15\% 
& \cellcolor{green!20}71.22\% \\
\bottomrule
\end{tabular}
\end{table}

\subsection{Hyperparameter Analysis}

\paragraph{Sensitivity to temporal polynomial degree.}
We evaluate the temporal polynomial degree under a fixed TensorSpectra configuration with a rank 8 backbone and a rank 4 correction stage. For all settings, the backbone and correction stages are trained for 50k epochs each, resulting in the same total budget of 100k epochs. Figure~\ref{fig:ablation_time_degree_rank} (left) shows that the performance is not monotonic with respect to the temporal degree. Degrees 4 and 8 achieve comparable errors of approximately $5\times10^{-4}$, whereas increasing the degree to 16 or 24 leads to larger errors on the order of $10^{-3}$. A moderate temporal degree achieves the best accuracy among the tested settings.

\paragraph{Sensitivity to tensor rank.}
We investigate the effect of the backbone and correction ranks while fixing the temporal polynomial degree to 8. For all configurations, the backbone and correction stages are trained for 50k epochs each, resulting in the same total training budget of 100k epochs. Figure~\ref{fig:ablation_time_degree_rank} (right) shows that increasing the configuration from $4+2$ to $8+4$ reduces the relative $L^2$ error from $8.84\times10^{-4}$ to $5.84\times10^{-4}$, while further increasing the backbone rank to $12$ yields the best performance of $3.54\times10^{-4}$ with the $12+4$ configuration. However, increasing the correction rank from 4 to 6 slightly degrades the mean error to $4.45\times10^{-4}$. These results indicate that increasing the representation capacity is beneficial up to a moderate rank, whereas excessive correction rank does not provide further improvement.

\begin{figure}[h]
    \centering
    \includegraphics[width=\linewidth]{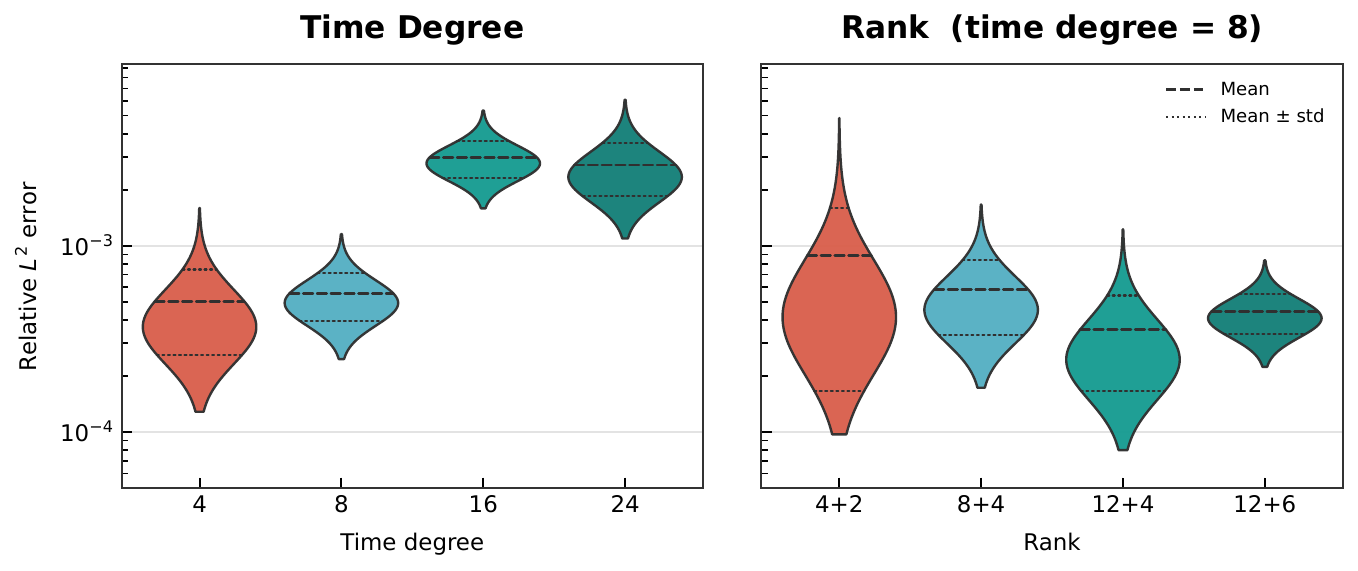}
    \caption{Relative $L^2$ error under different time degrees (left) and different backbone-correction rank configurations with the time degree fixed to $8$ (right). All configurations use a rank-based correction stage and are trained for 50k epochs in the backbone stage followed by 50k epochs in the correction stage. Violin plots show the distribution over repeated runs, with dashed lines indicating the mean and dotted lines denoting mean $\pm$ standard deviation.}
    \label{fig:ablation_time_degree_rank}
\end{figure}

\section{Discussions and Conclusions}\label{sec:conclusions}
Our results suggest that high-dimensional spectral PDE solving is fundamentally constrained by the representation complexity of the multidimensional coefficient field. By exploiting low-rank structure directly in spectral space, TensorSpectra retains the accuracy and operator structure of spectral methods while substantially reducing this complexity. More broadly, these results indicate that explicitly encoding solution structure can provide an effective alternative to relying entirely on generic neural representations to discover such structure through optimization.

The efficiency of TensorSpectra depends on spectral compressibility. Strongly non-separable dynamics, singular solutions, and nonlinear interactions can increase the required rank. Future work will therefore focus on adaptive rank control, more flexible tensor formats, and extensions to more challenging nonlinear and inverse problems. These directions may further clarify when low-rank spectral structure provides a practical route to scalable high-dimensional scientific computing.

\section*{Data Availability}
The datasets generated during and/or analysed during the current study are available from the corresponding author on request.

\section*{Conflict of interest}
The authors declared that they have no conflict of interest.
 
\section*{Acknowledgments}
Zhaoxiang Li is supported by the National Natural Science Foundation of China (12671434, 12571391, 12271366).

\bibliographystyle{plain}  
\bibliography{references}  

@article{raissi2019physics,
  title={Physics-informed neural networks: A deep learning framework for solving forward and inverse problems involving nonlinear partial differential equations},
  author={Raissi, Maziar and Perdikaris, Paris and Karniadakis, George E},
  journal={Journal of Computational physics},
  volume={378},
  pages={686--707},
  year={2019},
  publisher={Elsevier}
}

@article{cho2023separable,
  title={Separable physics-informed neural networks},
  author={Cho, Junwoo and Nam, Seungtae and Yang, Hyunmo and Yun, Seok-Bae and Hong, Youngjoon and Park, Eunbyung},
  journal={Advances in Neural Information Processing Systems},
  volume={36},
  pages={23761--23788},
  year={2023}
}

@article{wang2024solving,
  title={Solving high-dimensional partial differential equations using tensor neural network and a posteriori error estimators},
  author={Wang, Yifan and Lin, Zhongshuo and Liao, Yangfei and Liu, Haochen and Xie, Hehu},
  journal={Journal of Scientific Computing},
  volume={101},
  number={3},
  pages={67},
  year={2024},
  publisher={Springer}
}

@article{wang2024piratenets,
  title={Piratenets: Physics-informed deep learning with residual adaptive networks},
  author={Wang, Sifan and Li, Bowen and Chen, Yuhan and Perdikaris, Paris},
  journal={Journal of Machine Learning Research},
  volume={25},
  number={402},
  pages={1--51},
  year={2024}
}

@inproceedings{yu2026spectral,
title={Spectral-Informed Neural Networks Outperform Spectral methods in High-dimensional {PDE}s},
author={Tianchi Yu and Ivan Oseledets},
booktitle={Forty-third International Conference on Machine Learning},
year={2026}
}

@inproceedings{kaminskyovercoming,
  title={Overcoming PINNs Failure Modes In High Dimension With Low-Rank Fourier Sum},
  author={Kaminsky, Natan and Freedman, Daniel and Radinsky, Kira},
  booktitle={Forty-third International Conference on Machine Learning},
  year={2026}
}

@article{Bungartz_Griebel_2004, title={Sparse grids}, volume={13}, DOI={10.1017/S0962492904000182}, journal={Acta Numerica}, author={Bungartz, Hans-Joachim and Griebel, Michael}, year={2004}, pages={147-269}}

@article{han2018solving,
  title={Solving high-dimensional partial differential equations using deep learning},
  author={Han, Jiequn and Jentzen, Arnulf and E, Weinan},
  journal={Proceedings of the National Academy of Sciences},
  volume={115},
  number={34},
  pages={8505--8510},
  year={2018},
  publisher={National Academy of Sciences}
}

@article{cohen2015approximation,
  title={Approximation of high-dimensional parametric PDEs},
  author={Cohen, Albert and DeVore, Ronald},
  journal={Acta Numerica},
  volume={24},
  pages={1--159},
  year={2015},
  publisher={Cambridge University Press}
}

@article{hornung2025space,
  title={Space-time deep neural network approximations for high-dimensional partial differential equations},
  author={Hornung, Fabian and Jentzen, Arnulf and Salimova, Diyora},
  journal={Journal of Computational Mathematics},
  volume={43},
  number={4},
  pages={918--975},
  year={2025}
}

@article{khoromskij2012tensors,
  title={Tensors-structured numerical methods in scientific computing: Survey on recent advances},
  author={Khoromskij, Boris N},
  journal={Chemometrics and intelligent laboratory systems},
  volume={110},
  number={1},
  pages={1--19},
  year={2012},
  publisher={Elsevier}
}

@book{trefethen2000spectral,
  title={Spectral methods in MATLAB},
  author={Trefethen, Lloyd N},
  year={2000},
  publisher={SIAM}
}

@book{boyd2001chebyshev,
  title={Chebyshev and Fourier spectral methods},
  author={Boyd, John P},
  year={2001},
  publisher={Courier Corporation}
}

@article{bigoni2016spectral,
  title={Spectral tensor-train decomposition},
  author={Bigoni, Daniele and Engsig-Karup, Allan P and Marzouk, Youssef M},
  journal={SIAM Journal on Scientific Computing},
  volume={38},
  number={4},
  pages={A2405--A2439},
  year={2016},
  publisher={SIAM}
}

@article{bachmayr2023low,
  title={Low-rank tensor methods for partial differential equations},
  author={Bachmayr, Markus},
  journal={Acta Numerica},
  volume={32},
  pages={1--121},
  year={2023},
  publisher={Cambridge University Press}
}

@article{sirignano2018dgm,
  title={DGM: A deep learning algorithm for solving partial differential equations},
  author={Sirignano, Justin and Spiliopoulos, Konstantinos},
  journal={Journal of computational physics},
  volume={375},
  pages={1339--1364},
  year={2018},
  publisher={Elsevier}
}

@article{yu2018deep,
  title={The deep Ritz method: a deep learning-based numerical algorithm for solving variational problems},
  author={Weinan E and Yu, Bing},
  journal={Communications in Mathematics and Statistics},
  volume={6},
  number={1},
  pages={1--12},
  year={2018},
  publisher={Springer}
}

@article{krishnapriyan2021characterizing,
  title={Characterizing possible failure modes in physics-informed neural networks},
  author={Krishnapriyan, Aditi and Gholami, Amir and Zhe, Shandian and Kirby, Robert and Mahoney, Michael},
  journal={Advances in Neural Information Processing Systems},
  volume={34},
  pages={26548--26560},
  year={2021}
}

@article{wang2022and,
  title={When and why PINNs fail to train: A neural tangent kernel perspective},
  author={Wang, Sifan and Yu, Xinling and Perdikaris, Paris},
  journal={Journal of Computational Physics},
  volume={449},
  pages={110768},
  year={2022},
  publisher={Elsevier}
}

@book{shen2011spectral,
  title={Spectral methods: algorithms, analysis and applications},
  author={Shen, Jie and Tang, Tao and Wang, Li-Lian},
  year={2011},
  publisher={Springer Science \& Business Media}
}

@article{ngom2021fourier,
  title={Fourier neural networks as function approximators and differential equation solvers},
  author={Ngom, Marieme and Marin, Oana},
  journal={Statistical Analysis and Data Mining: The ASA Data Science Journal},
  volume={14},
  number={6},
  pages={647--661},
  year={2021},
  publisher={Wiley Online Library}
}

@inproceedings{li2021fourier,
title={Fourier Neural Operator for Parametric Partial Differential Equations},
author={Zongyi Li and Nikola Borislavov Kovachki and Kamyar Azizzadenesheli and Burigede liu and Kaushik Bhattacharya and Andrew Stuart and Anima Anandkumar},
booktitle={International Conference on Learning Representations},
year={2021}
}

@article{yu2025spectral,
  title={Spectral informed neural networks},
  author={Yu, Tianchi and Qi, Yiming and Oseledets, Ivan and Chen, Shiyi},
  journal={Journal of Computational and Applied Mathematics},
  pages={117178},
  year={2025},
  publisher={Elsevier}
}

@article{kolda2009tensor,
  title={Tensor decompositions and applications},
  author={Kolda, Tamara G and Bader, Brett W},
  journal={SIAM review},
  volume={51},
  number={3},
  pages={455--500},
  year={2009},
  publisher={SIAM}
}

@article{oseledets2011tensor,
  title={Tensor-train decomposition},
  author={Oseledets, Ivan V},
  journal={SIAM Journal on Scientific Computing},
  volume={33},
  number={5},
  pages={2295--2317},
  year={2011},
  publisher={SIAM}
}

@article{wang2024,
  author = {Wang, Yifan and Xie, Hehu and Jin, Pengzhan},
  title = {Tensor Neural Network and Its Numerical Integration},
  journal = {Journal of Computational Mathematics},
  year = {2024},
  volume = {42},
  number = {6},
  pages = {1714-1742}
}

@article{hackbusch2009new,
  title={A new scheme for the tensor representation},
  author={Hackbusch, Wolfgang and K{\"u}hn, Stefan},
  journal={Journal of Fourier analysis and applications},
  volume={15},
  number={5},
  pages={706--722},
  year={2009},
  publisher={Springer}
}

@article{grasedyck2013literature,
  title={A literature survey of low-rank tensor approximation techniques},
  author={Grasedyck, Lars and Kressner, Daniel and Tobler, Christine},
  journal={GAMM-Mitteilungen},
  volume={36},
  number={1},
  pages={53--78},
  year={2013},
  publisher={Wiley Online Library}
}

@article{pruliere2010deterministic,
  title={On the deterministic solution of multidimensional parametric models using the proper generalized decomposition},
  author={Pruliere, Etienne and Chinesta, Francisco and Ammar, Amine},
  journal={Mathematics and Computers in Simulation},
  volume={81},
  number={4},
  pages={791--810},
  year={2010},
  publisher={Elsevier}
}

@article{kharazmi2019variational,
  title={Variational physics-informed neural networks for solving partial differential equations},
  author={Kharazmi, Ehsan and Zhang, Zhongqiang and Karniadakis, George Em},
  journal={arXiv preprint arXiv:1912.00873},
  year={2019}
}

@article{kharazmi2021hp,
  title={hp-VPINNs: Variational physics-informed neural networks with domain decomposition},
  author={Kharazmi, Ehsan and Zhang, Zhongqiang and Karniadakis, George Em},
  journal={Computer Methods in Applied Mechanics and Engineering},
  volume={374},
  pages={113547},
  year={2021},
  publisher={Elsevier}
}

@article{jagtap2020conservative,
  title={Conservative physics-informed neural networks on discrete domains for conservation laws: Applications to forward and inverse problems},
  author={Jagtap, Ameya D and Kharazmi, Ehsan and Karniadakis, George Em},
  journal={Computer Methods in Applied Mechanics and Engineering},
  volume={365},
  pages={113028},
  year={2020},
  publisher={Elsevier}
}

@article{jagtap2020extended,
  title={Extended physics-informed neural networks (XPINNs): A generalized space-time domain decomposition based deep learning framework for nonlinear partial differential equations},
  author={Jagtap, Ameya D and Karniadakis, George Em},
  journal={Communications in Computational Physics},
  volume={28},
  number={5},
  year={2020},
  publisher={Brown Univ., Providence, RI (United States)}
}

@inproceedings{kang2025pig,
  title={Pig: Physics-informed gaussians as adaptive parametric mesh representations},
  author={Kang, Namgyu and Oh, Jaemin and Hong, Youngjoon and Park, Eunbyung},
  booktitle={International Conference on Learning Representations},
  volume={2025},
  pages={73022--73046},
  year={2025}
}

@article{mcclenny2023self,
  title={Self-adaptive physics-informed neural networks},
  author={McClenny, Levi D and Braga-Neto, Ulisses M},
  journal={Journal of Computational Physics},
  volume={474},
  pages={111722},
  year={2023},
  publisher={Elsevier}
}

@article{wang2021understanding,
  title={Understanding and mitigating gradient flow pathologies in physics-informed neural networks},
  author={Wang, Sifan and Teng, Yujun and Perdikaris, Paris},
  journal={SIAM Journal on Scientific Computing},
  volume={43},
  number={5},
  pages={A3055--A3081},
  year={2021},
  publisher={SIAM}
}

@book{gottlieb1977numerical,
  title={Numerical analysis of spectral methods: theory and applications},
  author={Gottlieb, David and Orszag, Steven A},
  year={1977},
  publisher={SIAM}
}

@inproceedings{rahaman2019spectral,
  title={On the spectral bias of neural networks},
  author={Rahaman, Nasim and Baratin, Aristide and Arpit, Devansh and Draxler, Felix and Lin, Min and Hamprecht, Fred and Bengio, Yoshua and Courville, Aaron},
  booktitle={International conference on machine learning},
  pages={5301--5310},
  year={2019},
  organization={PMLR}
}

@article{tancik2020fourier,
  title={Fourier features let networks learn high frequency functions in low dimensional domains},
  author={Tancik, Matthew and Srinivasan, Pratul and Mildenhall, Ben and Fridovich-Keil, Sara and Raghavan, Nithin and Singhal, Utkarsh and Ramamoorthi, Ravi and Barron, Jonathan and Ng, Ren},
  journal={Advances in neural information processing systems},
  volume={33},
  pages={7537--7547},
  year={2020}
}

@article{li2024physics,
  title={Physics-informed neural operator for learning partial differential equations},
  author={Li, Zongyi and Zheng, Hongkai and Kovachki, Nikola and Jin, David and Chen, Haoxuan and Liu, Burigede and Azizzadenesheli, Kamyar and Anandkumar, Anima},
  journal={ACM/IMS Journal of Data Science},
  volume={1},
  number={3},
  pages={1--27},
  year={2024},
  publisher={ACM New York, NY}
}

@article{liu2024render,
  title={Render unto numerics: Orthogonal polynomial neural operator for PDEs with nonperiodic boundary conditions},
  author={Liu, Ziyuan and Wang, Haifeng and Zhang, Hong and Bao, Kaijun and Qian, Xu and Song, Songhe},
  journal={SIAM Journal on Scientific Computing},
  volume={46},
  number={4},
  pages={C323--C348},
  year={2024},
  publisher={SIAM}
}

@article{choi2024spectral,
  title={Spectral operator learning for parametric PDEs without data reliance},
  author={Choi, Junho and Yun, Taehyun and Kim, Namjung and Hong, Youngjoon},
  journal={Computer Methods in Applied Mechanics and Engineering},
  volume={420},
  pages={116678},
  year={2024},
  publisher={Elsevier}
}

@article{kingma2014adam,
  title={Adam: A method for stochastic optimization},
  author={Kingma, Diederik P and Ba, Jimmy},
  journal={arXiv preprint arXiv:1412.6980},
  year={2014}
}

\newpage
\appendix

\addtocontents{toc}{\protect\setcounter{tocdepth}{2}}

{\centering\Large\bfseries
TensorSpectra: Bridging the Spectral and Tensor Worlds\\
for High-Dimensional PDEs -- Appendix\par}
\label{sec:appendix}

\noindent\rule{\textwidth}{0.6pt}
\begingroup
\renewcommand{\contentsname}{{\Large Contents}}
\let\appendixtocaddvspace\addvspace
\renewcommand{\addvspace}[1]{\appendixtocaddvspace{0.65em}}
\tableofcontents
\endgroup
\noindent\rule{\textwidth}{0.6pt}

\section{Spectral Basis Definitions} \label{sec:spectral basis}
In this section, we provide the detailed definitions and basic properties of the spectral bases used in TensorSpectra. Specifically, we summarize Fourier basis for periodic problems and Jacobi polynomial bases for non-periodic settings, together with the normalization, orthogonality, and coefficient conventions adopted throughout the paper. We also clarify the relationships among these polynomial bases, with Legendre and Chebyshev polynomials viewed as important special cases within the broader Jacobi family. For time-dependent problems, the spatial factors are independent of time and each CP component is multiplied by the temporal factor $T_\alpha(t)$ defined in Equation~\eqref{eq:ts-time-basis}. Stationary problems are recovered by setting $T_\alpha\equiv1$.

\subsection{Fourier Basis}
For periodic problems, TensorSpectra adopts the complex Fourier basis. On the canonical interval $[0,2\pi]$, the one-dimensional basis functions are
\begin{equation}
  \phi_k(x)=e^{\mathrm{i}kx},
  \quad k\in\mathbb Z,
\end{equation}
with $\mathrm{i}=\sqrt{-1}$. In TensorSpectra, the $j$-th spectral factor of the $\alpha$-th CP component is represented by the coefficient vector
\begin{equation}
  \mathbf a_\alpha^{(j)} = \left[a_{\alpha,k_j}^{(j)}\right]_{k_j\in\mathcal K_j},
\end{equation}
which defines the corresponding one-dimensional Fourier expansion
\begin{equation}
  \psi_\alpha^{(j)}(x_j) = \sum_{k_j\in\mathcal K_j} a_{\alpha,k_j}^{(j)} e^{\mathrm{i}k_jx_j}.
\end{equation}
Combining the one-dimensional factors across dimensions gives the separable representation
\begin{equation}
  u(\boldsymbol{x},t) = \sum_{\alpha=1}^{r} \lambda_\alpha T_\alpha(t) \prod_{j=1}^{d} \psi_\alpha^{(j)}(x_j),
\end{equation}
which is equivalent to the CP representation of the multidimensional Fourier coefficient tensor. In particular, the coefficient associated with the multidimensional mode $\boldsymbol{k}=(k_1,\ldots,k_d)$ is
\begin{equation}
  \widehat u_{\boldsymbol{k}}(t) = \sum_{\alpha=1}^{r} \lambda_\alpha T_\alpha(t) \prod_{j=1}^{d} a_{\alpha,k_j}^{(j)}.
\end{equation}

\subsection{Jacobi Basis} \label{sec:Jacobi basis}
For non-periodic problems, TensorSpectra employs Jacobi polynomials as a general polynomial spectral basis on the canonical interval $[-1,1]$. The degree-$n$ Jacobi polynomial is denoted by $J_n^{(\alpha,\beta)}(x)$, where $\alpha,\beta>-1$ determine the associated weight function
\begin{equation}
  w^{(\alpha,\beta)}(x) = (1-x)^{\alpha}(1+x)^{\beta},
  \quad x\in[-1,1].
\end{equation}
The Jacobi polynomials form an orthogonal basis with respect to this weighted inner product,
\begin{equation}
  \int_{-1}^{1} J_m^{(\alpha,\beta)}(x) J_n^{(\alpha,\beta)}(x) w^{(\alpha,\beta)}(x)\,dx = h_n^{(\alpha,\beta)}\delta_{mn},
\end{equation}
where $h_n^{(\alpha,\beta)}$ denotes the squared norm of the $n$-th basis function and $\delta_{mn}$ is the Kronecker delta.

Within TensorSpectra, the $j$-th spectral factor of the $\ell$-th CP component is represented by the coefficient vector
\begin{equation}
  \mathbf a_{\ell}^{(j)} = \left[ a_{\ell,k_j}^{(j)} \right]_{k_j=0}^{N_j-1},
\end{equation}
which defines the corresponding one-dimensional Jacobi expansion
\begin{equation}
  \psi_{\ell}^{(j)}(x_j) = \sum_{k_j=0}^{N_j-1} a_{\ell,k_j}^{(j)} J_{k_j}^{(\alpha,\beta)}(x_j).
\end{equation}
Combining these one-dimensional expansions across coordinates gives the multidimensional separable representation
\begin{equation}
  u(\boldsymbol{x},t) = \sum_{\ell=1}^{r} \lambda_{\ell} T_\ell(t) \prod_{j=1}^{d} \psi_{\ell}^{(j)}(x_j),
\end{equation}
and the corresponding Jacobi coefficient associated with the multidimensional index $\boldsymbol{k}=(k_1,\ldots,k_d)$ is
\begin{equation}
  \widehat u_{\boldsymbol{k}}(t) = \sum_{\ell=1}^{r} \lambda_{\ell} T_\ell(t) \prod_{j=1}^{d} a_{\ell,k_j}^{(j)}.
\end{equation}
Thus, the same CP coefficient structure used in the Fourier setting is retained, while only the underlying one-dimensional spectral basis is changed.

\paragraph{Legendre basis.}
The Legendre basis is obtained from the Jacobi family by setting $\alpha=\beta=0$. The corresponding weight function is therefore uniform, and the degree-$n$ Legendre polynomial satisfies
\begin{equation}
  L_n(x)=J_n^{(0,0)}(x),
  \quad w(x)=1.
\end{equation}
The Legendre polynomials are orthogonal on $[-1,1]$ under the standard $L^2$ inner product,
\begin{equation}
  \int_{-1}^{1} L_m(x)L_n(x)\,dx = \frac{2}{2n+1}\delta_{mn}.
\end{equation}
Accordingly, when TensorSpectra uses a Legendre backend, the one-dimensional factor associated with the $j$-th coordinate is expanded as
\begin{equation}
  \psi_{\ell}^{(j)}(x_j) = \sum_{k_j=0}^{N_j-1} a_{\ell,k_j}^{(j)} L_{k_j}(x_j).
\end{equation}
Thus, the CP structure remains unchanged, while the underlying one-dimensional spectral factors are interpreted as Legendre coefficients.

\paragraph{Chebyshev basis.}
The Chebyshev polynomials of the first kind provide another important special case of the Jacobi family. They are associated with $\alpha=\beta=-1/2$ and can be defined by
\begin{equation}
  T_n(x) = \cos\!\left(n\arccos x\right),
  \quad x\in[-1,1].
\end{equation}
Under the standard Jacobi normalization, the relation between Chebyshev and Jacobi polynomials is
\begin{equation}
  T_n(x) = \frac{n!\sqrt{\pi}} {\Gamma\!\left(n+\frac{1}{2}\right)} J_n^{(-1/2,-1/2)}(x).
\end{equation}
The corresponding weight function is $w(x)=(1-x^2)^{-1/2}$, and the orthogonality relation is
\begin{equation}
  \int_{-1}^{1} \frac{T_m(x)T_n(x)} {\sqrt{1-x^2}}\,dx =
  \begin{cases}
    \pi, & m=n=0,\\[2mm]
    \dfrac{\pi}{2}, & m=n\geq 1,\\[2mm]
    0, & m\neq n.
  \end{cases}
\end{equation}
When Chebyshev polynomials are used in TensorSpectra, the corresponding one-dimensional factor is represented as
\begin{equation}
  \psi_{\ell}^{(j)}(x_j) = \sum_{k_j=0}^{N_j-1} a_{\ell,k_j}^{(j)} T_{k_j}(x_j).
\end{equation}
This basis is particularly convenient for representing non-periodic coordinates and, in our implementation, temporal dependence when Chebyshev-in-time discretization is employed.

\section{De-Aliased Nonlinear Spectral Calculus}
\label{sec:de-aliased}
In this section, we provide the detailed de-aliasing procedures used by TensorSpectra to evaluate nonlinear spectral interactions while preserving the low-rank tensor structure, covering both the Fourier and Jacobi spectral backends.

\subsection{Fourier Backend}
For the Fourier backend, nonlinear interactions are evaluated using a standard padding-and-truncation procedure. Let $\mathcal{P}_{N\rightarrow M}$ denote zero-padding from the retained spectral resolution $N$ to an enlarged resolution $M>N$, and let $\mathcal{T}_{M\rightarrow N}$ denote truncation back to the retained frequency range. For each CP factor, the padded spectral coefficients are defined as
\begin{equation}
  \widetilde{\mathbf a}_{\alpha}^{(j)} = \mathcal{P}_{N\rightarrow M} \mathbf a_{\alpha}^{(j)} .
\end{equation}
The corresponding one-dimensional physical-space factor on the enlarged grid is then obtained through the inverse Fourier transform,
\begin{equation}
  \widetilde{\psi}_{\alpha}^{(j)} = \mathcal{F}_{M}^{-1} \widetilde{\mathbf a}_{\alpha}^{(j)} .
\end{equation}
Combining the transformed factors gives the padded physical-space approximation
\begin{equation}
  u_M(\boldsymbol{x}) = \sum_{\alpha=1}^{r} \lambda_{\alpha} \prod_{j=1}^{d} \widetilde{\psi}_{\alpha}^{(j)}(x_j).
\end{equation}
The nonlinear operator is evaluated on this enlarged physical grid and then transformed back to spectral space. The resulting nonlinear spectral term is truncated to the original retained bandwidth,
\begin{equation}
  \widehat{\mathcal N}(u) = \mathcal{T}_{M\rightarrow N} \mathcal{F}_{M} \left[ \mathcal N \left( \mathcal{F}_{M}^{-1} \mathcal{P}_{N\rightarrow M} \widehat{\mathcal U} \right) \right].
\end{equation}
This enlarged-grid evaluation prevents unresolved nonlinear interactions from folding back into the retained modes. For quadratic nonlinearities, TensorSpectra adopts the standard $3/2$-padding rule,
\begin{equation}
  M \simeq \frac{3}{2}N.
\end{equation}
More generally, for a polynomial nonlinearity of degree $s$, an alias-free choice under the usual symmetric Fourier convention satisfies
\begin{equation}
  M \geq \frac{s+1}{2}N,
\end{equation}
which reduces to the $3/2$ rule when $s=2$.

Importantly, the padding and inverse transforms are applied to the one-dimensional spectral factors rather than to an explicitly assembled $N^d$ coefficient tensor. This allows the nonlinear term to be evaluated in a de-aliased manner while retaining the separable structure underlying TensorSpectra.

\subsection{Jacobi Backend}
For the Jacobi backend, nonlinear interactions are de-aliased through over-integration rather than Fourier zero-padding. Let $J_n^{(\alpha,\beta)}(x)$ denote the degree-$n$ Jacobi polynomial on $[-1,1]$, orthogonal with respect to the weight $w^{(\alpha,\beta)}(x)=(1-x)^{\alpha}(1+x)^{\beta}$. For a one-dimensional nonlinear factor $g(x)$, its projection onto the retained polynomial space of degree $p$ is defined by
\begin{equation}
  \left[\Pi_{p}^{(\alpha,\beta)} g\right]_n = \frac{1}{h_n^{(\alpha,\beta)}} \int_{-1}^{1} g(x)\, J_n^{(\alpha,\beta)}(x)\, w^{(\alpha,\beta)}(x)\,dx,
  \quad 0\le n\le p,
\end{equation}
where $h_n^{(\alpha,\beta)}$ denotes the squared norm of $J_n^{(\alpha,\beta)}$. In practice, this projection is evaluated using a $Q$-point Gauss--Jacobi quadrature rule with nodes $\{x_q\}_{q=1}^{Q}$ and weights $\{\omega_q^{(\alpha,\beta)}\}_{q=1}^{Q}$,
\begin{equation}
  \left[\Pi_{p,Q}^{(\alpha,\beta)} g\right]_n = \frac{1}{h_n^{(\alpha,\beta)}} \sum_{q=1}^{Q} \omega_q^{(\alpha,\beta)} g(x_q) J_n^{(\alpha,\beta)}(x_q).
\end{equation}
Thus, nonlinear terms are first evaluated at an over-integrated set of Gauss--Jacobi nodes and then projected back to the retained degree-$p$ polynomial space. This procedure plays the same role as padding and truncation in the Fourier backend, but is adapted to the polynomial basis and its weighted inner product.

The quadrature order $Q$ is chosen according to polynomial exactness. A $Q$-point Gauss--Jacobi rule integrates weighted polynomials of degree up to $2Q-1$ exactly. If the retained approximation has degree at most $p$ and the nonlinearity is a degree-$s$ polynomial, then the projected integrand $g(x)J_n^{(\alpha,\beta)}(x)$ has degree at most $(s+1)p$. An alias-free choice therefore satisfies
\begin{equation}
  Q \ge \left\lceil \frac{(s+1)p+1}{2} \right\rceil.
\end{equation}
For quadratic nonlinearities, this reduces to
\begin{equation}
  Q \ge \left\lceil \frac{3p+1}{2} \right\rceil.
\end{equation}
Within TensorSpectra, the nonlinear products are formed from one-dimensional factors and projected independently along each coordinate. The resulting projected factors are then recombined through their outer products, so the nonlinear evaluation can be carried out without explicitly forming the full $N^d$ coefficient tensor. For non-polynomial nonlinearities, exact polynomial degree counting is unavailable; in that case, $Q$ is increased until the projected residual and target quantities are numerically converged.

\subsection{Low-Rank Nonlinear Interaction and Rank Control}
Although the Fourier and Jacobi backends use different de-aliasing mechanisms, nonlinear interactions share the same low-rank structure at the level of CP factors. Consider two rank-one components of the physical-space representation,
\begin{equation}
  u_{\alpha}(\boldsymbol{x}) = \lambda_{\alpha} \prod_{j=1}^{d} \psi_{\alpha}^{(j)}(x_j),
  \quad u_{\beta}(\boldsymbol{x}) = \lambda_{\beta} \prod_{j=1}^{d} \psi_{\beta}^{(j)}(x_j).
\end{equation}
Their product remains separable across spatial coordinates,
\begin{equation}
  u_{\alpha}(\boldsymbol{x})u_{\beta}(\boldsymbol{x}) = \lambda_{\alpha}\lambda_{\beta} \prod_{j=1}^{d} \left[ \psi_{\alpha}^{(j)}(x_j) \psi_{\beta}^{(j)}(x_j) \right].
\end{equation}
Hence, nonlinear products can be evaluated through one-dimensional factor products without forming a dense multidimensional tensor. For the quadratic term $u^2$, a direct pairwise expansion would contain $r^2$ interactions. Since $u^2$ is symmetric with respect to the pair $(\alpha,\beta)$, TensorSpectra retains only the unique pairs with $\alpha\leq\beta$ and assigns multiplicity two to the off-diagonal interactions. The resulting effective rank is therefore
\begin{equation}
  r_{\mathrm{quad}} = \frac{r(r+1)}{2},
\end{equation}
rather than $r^2$. The symmetry reduction is exact. For each retained pair, the one-dimensional products are evaluated and projected independently in every coordinate, after which the resulting factors are recombined into a CP representation.

\section{PDE-Specific Spectral Objectives}
\label{sec:pde-loss}
In this section, we provide the PDE-specific instantiations of the spectral training objective introduced in the main text. For each benchmark, we derive the corresponding spectral residual, identify the characteristic operator scale used to construct the weighting $w_{\boldsymbol{k}}$, and state the resulting operator-aware objective. We consider the Poisson, Heat, Wave, steady-state Allen--Cahn and nonlinear Reaction equations used in our experiments.

\subsection{Poisson Equation}
We first consider the periodic Poisson equation used in our linear benchmarks,
\begin{equation}
  \Delta u(\boldsymbol{x}) = f(\boldsymbol{x}),
  \quad \boldsymbol{x}\in[0,2\pi]^d .
\end{equation}
Under the Fourier representation, the Laplace operator acts diagonally on each spectral mode, with $\widehat{\Delta u}_{\boldsymbol{k}}=-\|\boldsymbol{k}\|_2^2\widehat{u}_{\boldsymbol{k}}$. Therefore, the spectral residual associated with mode $\boldsymbol{k}$ is
\begin{equation}
  \mathcal{R}_{\boldsymbol{k}} = -\|\boldsymbol{k}\|_2^2 \widehat{u}_{\boldsymbol{k}} - \widehat{f}_{\boldsymbol{k}} .
\end{equation}
The characteristic spectral scale of the Poisson operator is thus determined by the Laplacian symbol,
\begin{equation}
  \sigma_{\Delta}(\boldsymbol{k}) = \|\boldsymbol{k}\|_2^2,
  \quad w_{\boldsymbol{k}} = \rho\!\left( \|\boldsymbol{k}\|_2^2 \right),
\end{equation}
where $\rho(\cdot)$ denotes the decreasing scaling function introduced in the main text.

Accordingly, TensorSpectra minimizes the operator-aware spectral objective
\begin{equation}
  \mathcal{L}_{\mathrm{Poisson}} = \frac{1}{|\mathcal{K}_{\!*}|} \sum_{\boldsymbol{k}\in\mathcal{K}_{\!*}} w_{\boldsymbol{k}}^2 \left| -\|\boldsymbol{k}\|_2^2 \widehat{u}_{\boldsymbol{k}} - \widehat{f}_{\boldsymbol{k}} \right|^2 ,
\end{equation}
where $\mathcal{K}_{\!*}$ denotes the retained nonzero Fourier modes. The zero-frequency mode is excluded consistently with the compatibility condition imposed in the periodic Poisson benchmark.

\subsection{Heat Equation}
We next consider the $d$-dimensional Heat equation with periodic boundary conditions,
\begin{equation}
  \frac{\partial u}{\partial t}(\boldsymbol{x},t) = \Delta u(\boldsymbol{x},t),
  \quad (\boldsymbol{x},t)\in[0,2\pi]^d\times[0,T].
\end{equation}
Under the Fourier representation, the Laplace operator satisfies $\widehat{\Delta u}_{\boldsymbol{k}}=-\|\boldsymbol{k}\|_2^2\widehat{u}_{\boldsymbol{k}}$. Hence, the coefficient-space evolution equation for each Fourier mode is
\begin{equation}
  \partial_t\widehat{u}_{\boldsymbol{k}}(t) + \|\boldsymbol{k}\|_2^2 \widehat{u}_{\boldsymbol{k}}(t) = 0,
\end{equation}
and the corresponding spectral residual is
\begin{equation}
  \mathcal{R}_{\boldsymbol{k}}(t) = \partial_t\widehat{u}_{\boldsymbol{k}}(t) + \|\boldsymbol{k}\|_2^2 \widehat{u}_{\boldsymbol{k}}(t).
\end{equation}
Following the operator-aware construction in the main text, the spatial operator scale is determined by the Laplacian symbol,
\begin{equation}
  \sigma_{\mathrm{Heat}}(\boldsymbol{k}) = \|\boldsymbol{k}\|_2^2,
  \quad w_{\boldsymbol{k}} = \rho\!\left( \|\boldsymbol{k}\|_2^2 \right).
\end{equation}
The resulting operator-aware PDE objective is therefore
\begin{equation}
  \mathcal{L}_{\mathrm{Heat}}^{\mathrm{PDE}} = \frac{1}{|\mathcal{S}|} \sum_{(\boldsymbol{k},t)\in\mathcal{S}} w_{\boldsymbol{k}}^2 \left| \partial_t\widehat{u}_{\boldsymbol{k}}(t) + \|\boldsymbol{k}\|_2^2 \widehat{u}_{\boldsymbol{k}}(t) \right|^2.
\end{equation}
The initial condition is imposed in spectral space by matching the predicted coefficients at $t=0$ to the prescribed initial coefficients $\widehat{u}_{\boldsymbol{k}}^{\,0}$,
\begin{equation}
  \mathcal{L}_{\mathrm{IC}} = \frac{1}{|\mathcal{K}|} \sum_{\boldsymbol{k}\in\mathcal{K}} \left| \widehat{u}_{\boldsymbol{k}}(0) - \widehat{u}_{\boldsymbol{k}}^{\,0} \right|^2,
\end{equation}
leading to the overall Heat-equation objective
\begin{equation}
  \mathcal{L}_{\mathrm{Heat}} = \mathcal{L}_{\mathrm{Heat}}^{\mathrm{PDE}} + \lambda_{\mathrm{IC}}\mathcal{L}_{\mathrm{IC}}.
\end{equation}
In the benchmark used in this work, the initial spectral coefficients are inherited from the Poisson solution introduced above, and each Fourier mode subsequently evolves independently under the Heat operator.

\subsection{Wave Equation}
We next consider the $d$-dimensional Wave equation with periodic boundary conditions,
\begin{equation}
  \frac{\partial^2 u}{\partial t^2}(\boldsymbol{x},t) = \Delta u(\boldsymbol{x},t),
  \quad (\boldsymbol{x},t)\in[0,2\pi]^d\times[0,T].
\end{equation}
Under the Fourier representation, the Laplace operator acts diagonally on each spatial mode. Hence, every Fourier coefficient satisfies the decoupled second-order evolution equation
\begin{equation}
  \partial_{tt}\widehat{u}_{\boldsymbol{k}}(t) + \|\boldsymbol{k}\|_2^2 \widehat{u}_{\boldsymbol{k}}(t) = 0,
\end{equation}
and the corresponding spectral residual is
\begin{equation}
  \mathcal{R}_{\boldsymbol{k}}(t) = \partial_{tt}\widehat{u}_{\boldsymbol{k}}(t) + \|\boldsymbol{k}\|_2^2 \widehat{u}_{\boldsymbol{k}}(t).
\end{equation}
Following the operator-aware construction, the characteristic spatial scale is determined by the Fourier symbol of the Laplacian,
\begin{equation}
  \sigma_{\mathrm{Wave}}(\boldsymbol{k}) = \|\boldsymbol{k}\|_2^2,
  \quad w_{\boldsymbol{k}} = \rho\!\left( \|\boldsymbol{k}\|_2^2 \right).
\end{equation}
The resulting operator-aware PDE objective is
\begin{equation}
  \mathcal{L}_{\mathrm{Wave}}^{\mathrm{PDE}} = \frac{1}{|\mathcal{S}|} \sum_{(\boldsymbol{k},t)\in\mathcal{S}} w_{\boldsymbol{k}}^2 \left| \partial_{tt}\widehat{u}_{\boldsymbol{k}}(t) + \|\boldsymbol{k}\|_2^2 \widehat{u}_{\boldsymbol{k}}(t) \right|^2.
\end{equation}
Because the Wave equation is second order in time, both the initial displacement and initial velocity must be prescribed. In our benchmark, the initial displacement is inherited from the Poisson solution and the initial velocity is set to zero. Accordingly, we define
\begin{equation}
  \mathcal{L}_{\mathrm{IC}} = \frac{1}{|\mathcal{K}|} \sum_{\boldsymbol{k}\in\mathcal{K}} \left| \widehat{u}_{\boldsymbol{k}}(0) - \widehat{u}_{\boldsymbol{k}}^{\,0} \right|^2 + \frac{1}{|\mathcal{K}|} \sum_{\boldsymbol{k}\in\mathcal{K}} \left| \partial_t\widehat{u}_{\boldsymbol{k}}(0) - \widehat{v}_{\boldsymbol{k}}^{\,0} \right|^2,
\end{equation}
where $\widehat{v}_{\boldsymbol{k}}^{\,0}=0$ in the present benchmark. The overall training objective is therefore
\begin{equation}
  \mathcal{L}_{\mathrm{Wave}} = \mathcal{L}_{\mathrm{Wave}}^{\mathrm{PDE}} + \lambda_{\mathrm{IC}} \mathcal{L}_{\mathrm{IC}}.
\end{equation}

\subsection{Steady-State Allen--Cahn Equation}
We next consider the steady-state Allen--Cahn equation on the non-periodic domain $[-1,1]^d$,
\begin{equation}
  -\epsilon^2\Delta u(\boldsymbol{x}) + u^3(\boldsymbol{x}) - u(\boldsymbol{x}) = f(\boldsymbol{x}),
  \quad \boldsymbol{x}\in[-1,1]^d,
\end{equation}
subject to the homogeneous Dirichlet boundary condition $u(\boldsymbol{x})=0$ on $\partial[-1,1]^d$. In this benchmark, TensorSpectra employs an orthonormal Legendre representation, and the boundary condition is imposed exactly through the factor-wise construction.

In coefficient space, the corresponding spectral residual is
\begin{equation}
  \widehat{\mathcal R}_{\mathrm{AC}} = -\epsilon^2\widehat{\Delta u} + \widehat{u^3} - \widehat{\mathcal U} - \widehat{\mathcal F},
\end{equation}
where $\widehat{u^3}$ is evaluated using the de-aliased low-rank nonlinear spectral calculus described in the main text and appendix.

Equivalently, for the retained multidimensional spectral index $\boldsymbol{k}$,
\begin{equation}
  \mathcal R_{\boldsymbol{k}}^{\mathrm{AC}} = -\epsilon^2 \left[\widehat{\Delta u}\right]_{\boldsymbol{k}} + \left[\widehat{u^3}\right]_{\boldsymbol{k}} - \widehat u_{\boldsymbol{k}} - \widehat f_{\boldsymbol{k}}.
\end{equation}
Unlike the Fourier cases, the Legendre differentiation operator does not admit a diagonal mode-wise symbol. We therefore construct the operator scale directly from the one-dimensional coefficient-space differentiation matrix. Let $\mathbf D^{(2)}$ denote the second-order Legendre differentiation matrix and $\mathbf e_n$ the $n$-th canonical coefficient vector. We define $\eta_n=\|\mathbf D^{(2)}\mathbf e_n\|_2$. For the multidimensional mode $\boldsymbol{k}=(k_1,\ldots,k_d)$, we then define the characteristic scale of the stiff linear part $-\varepsilon^2\Delta-I$ by
\begin{equation}
  \sigma_{\mathrm{AC}}(\boldsymbol{k}) = \left( 1 + \varepsilon^4 \sum_{j=1}^{d} \eta_{k_j}^{\,2} \right)^{1/2}.
\end{equation}
The corresponding operator-aware spectral weight is chosen as an inverse power of this scale,
\begin{equation}
  w_{\boldsymbol{k}} = \sigma_{\mathrm{AC}}(\boldsymbol{k})^{-p} = \left( 1 + \varepsilon^4 \sum_{j=1}^{d} \eta_{k_j}^{\,2} \right)^{-p/2},
  \quad 0<p\leq 1.
\end{equation}
The weight is constructed from the linear operator, while the full residual includes the cubic term. The resulting PDE-aware objective is
\begin{equation}
  \mathcal L_{\mathrm{AC}} = \frac{1}{|\mathcal K|} \sum_{\boldsymbol{k}\in\mathcal K} w_{\boldsymbol{k}}^2 \left| \mathcal R_{\boldsymbol{k}}^{\mathrm{AC}} \right|^2.
\end{equation}
Because the homogeneous Dirichlet boundary condition is enforced directly through the factor-wise parameterization, no additional boundary penalty is required in this benchmark.

\subsection{Nonlinear Reaction Equation}
We finally consider the nonlinear logistic reaction equation
\begin{equation}
  \frac{\partial u}{\partial t}(\boldsymbol{x},t) - \rho\,u(\boldsymbol{x},t) \left( 1-u(\boldsymbol{x},t) \right) = 0,
  \quad (\boldsymbol{x},t)\in[0,2\pi]^d\times[0,T],
\end{equation}
with a prescribed initial condition $u(\boldsymbol{x},0)=h(\boldsymbol{x})$. In this benchmark, both the spatial coordinates and time are represented using shifted orthonormal Legendre bases, with time treated as an additional tensor dimension. The projected initial condition $h_K$ is imposed exactly through the hard construction $u(\boldsymbol{x},t)=h_K(\boldsymbol{x})+t\,v(\boldsymbol{x},t)$, so that no additional initial-condition penalty is required.

The hard initial-condition parameterization is
\begin{equation}
  u(\boldsymbol{x},t) = h_K(\boldsymbol{x}) + t\,v(\boldsymbol{x},t).
\end{equation}
Expanding the nonlinear term gives $\partial_tu-\rho u+\rho u^2=0$. Accordingly, the coefficient-space residual takes the form
\begin{equation}
  \widehat{\mathcal R}_{\mathrm{React}} = \partial_t \widehat{u} - \rho\,\widehat{\mathcal U} + \rho\,\widehat{u^2},
\end{equation}
where $\widehat{u^2}$ is computed through the de-aliased low-rank Legendre product introduced previously.  Let $\boldsymbol{k}=(k_1,\ldots,k_d)$ denote the spatial spectral index and $q$ the temporal Legendre index. The corresponding space--time coefficient residual is
\begin{equation}
  \mathcal R_{\boldsymbol{k},q}^{\mathrm{React}} = \left[\partial_t \widehat{u}\right]_{\boldsymbol{k},q} - \rho\,\widehat u_{\boldsymbol{k},q} + \rho\left[\widehat{u^2}\right]_{\boldsymbol{k},q}.
\end{equation}
To construct an explicit operator-aware scale, we consider the linear part of the reaction operator, $\partial_t-\rho I$. Let $\mathbf D_t$ denote the one-dimensional differentiation matrix associated with the shifted orthonormal Legendre basis in time, and let $\mathbf e_q$ denote the canonical coefficient vector corresponding to temporal degree $q$. We define the temporal differentiation scale by
\begin{equation}
  \eta_q = \left\| \mathbf D_t\mathbf e_q \right\|_2.
\end{equation}
For the shifted orthonormal Legendre basis on $[0,T]$,
\begin{equation}
  \psi_q^{(t)}(t) = \sqrt{\frac{2q+1}{T}}\, L_q\!\left(\frac{2t}{T}-1\right),
\end{equation}
the Euclidean norm of $\mathbf D_t\mathbf e_q$ equals the $L^2(0,T)$ norm of the derivative because the basis is orthonormal. Using $\int_{-1}^{1}[L_q'(\xi)]^2d\xi=q(q+1)$, the scale can be evaluated explicitly as
\begin{equation}
  \eta_q^2 = \left\| \partial_t\psi_q^{(t)} \right\|_{L^2(0,T)}^2 = \frac{ 2q(q+1)(2q+1) }{ T^2 }.
\end{equation}

Because differentiation lowers the Legendre polynomial degree, $\mathbf D_t\mathbf e_q$ has no component along $\mathbf e_q$. The derivative contribution and the reaction contribution are therefore orthogonal in the coefficient-space norm. Hence, the characteristic scale of the linear operator $\partial_t-\rho I$ for temporal mode $q$ is
\begin{equation}
  \sigma_{\mathrm{React}}(q) = \left\| \left( \mathbf D_t-\rho\mathbf I \right) \mathbf e_q \right\|_2 = \left( \rho^2+\eta_q^2 \right)^{1/2}.
\end{equation}
Substituting the explicit Legendre differentiation scale gives
\begin{equation}
  \sigma_{\mathrm{React}}(q) = \left[ \rho^2 + \frac{ 2q(q+1)(2q+1) }{ T^2 } \right]^{1/2}.
\end{equation}
We normalize this scale by its lowest-order value $\rho$, so that the constant temporal mode receives unit weight. The resulting operator-aware weight is
\begin{equation}
  w_{\boldsymbol{k},q} = w_q = \left( \frac{\sigma_{\mathrm{React}}(q)}{\rho} \right)^{-p} = \left[ 1+ \frac{ 2q(q+1)(2q+1) }{ \rho^2T^2 } \right]^{-p/2},
  \quad 0<p\leq1.
\end{equation}
Since the reaction equation contains no spatial differential operator, the preconditioning weight depends only on the temporal spectral index $q$ and is shared by all spatial modes with the same temporal degree. The weight is constructed from the linear temporal operator, while the full residual includes the quadratic term $\rho u^2$. The resulting PDE-aware spectral objective is
\begin{equation}
  \mathcal L_{\mathrm{React}} = \frac{1}{|\mathcal K||\mathcal Q|} \sum_{\boldsymbol{k}\in\mathcal K} \sum_{q\in\mathcal Q} w_q^2 \left| \mathcal R_{\boldsymbol{k},q}^{\mathrm{React}} \right|^2,
\end{equation}
where $\mathcal K$ and $\mathcal Q$ denote the retained spatial and temporal spectral index sets, respectively. Since the initial condition is imposed exactly through the hard parameterization, no separate initial-condition loss is required.

\section{Baseline Methods} \label{sec:baseline methods}
In this section, we summarize the main ideas of the baseline methods used for comparison.

\paragraph{Physics-Informed Neural Network (PINN).}
PINN approximates the PDE solution with a neural network and trains it by minimizing the governing-equation residual together with boundary- and initial-condition errors. The required derivatives are computed through automatic differentiation at collocation points, providing a general physics-informed framework for both stationary and time-dependent problems.

\paragraph{Separable Physics-Informed Neural Network (SPINN).}
SPINN represents the solution as a low-rank sum of products of one-dimensional neural-network outputs, with a separate subnetwork for each coordinate. This separable architecture reduces network evaluations on tensor-product collocation grids and enables efficient forward-mode automatic differentiation, while retaining a physics-informed objective based on PDE residuals and auxiliary constraints.

\paragraph{Physics-Informed Residual Adaptive Network (PirateNet).}
PirateNet improves the trainability of deep PINNs through random Fourier embeddings, gated feature transformations, and adaptive residual connections. Its residual weights are initialized to zero so that the network starts from a shallow representation and learns to activate nonlinear blocks during training. The model retains the physics-informed objective while using this architecture and initialization to facilitate optimization of PDE residuals involving higher-order derivatives.

\paragraph{Spectral-Informed Neural Network (SINN).}
SINN learns spectral coefficients from frequency indices and, for time-dependent problems, time, rather than predicting solution values directly from spatial coordinates. It enforces the governing equation in the spectral domain, where spatial differentiation becomes algebraic operations on coefficients. For high-dimensional problems, we use the modified SINN variant with coefficient-decay scaling and learnable basis embeddings to incorporate prior spectral structure into the representation.

\paragraph{Tensor Neural Network (TNN).}
TNN approximates high-dimensional solutions using a low-rank tensor-product representation with normalized one-dimensional neural factors. This structure decomposes high-dimensional integrals into combinations of one-dimensional integrals, enabling efficient numerical integration. The PDE-solving framework combines this representation with variational or a posteriori error-estimator objectives, using the latter to control approximation error in the energy norm.

\paragraph{Low-Rank Fourier Sums (LoRFS).}
LoRFS represents the solution as a low-rank sum of products of finite one-dimensional Fourier expansions and directly optimizes their coefficients. It combines this representation with continuous integral-based physics objectives, exploiting Fourier orthogonality and separability to evaluate residual integrals deterministically without high-dimensional stochastic collocation. Its central idea is to pair an explicit low-rank spectral representation with efficient evaluation of the physics-informed loss.

\section{Further Theoretical Results}\label{app:ts-proofs}

In this section, we provide the complete proofs and technical details supporting the theoretical results in Section~\ref{sec:ts-theory}, including the complexity analysis, spectral and low-rank approximation bounds, residual stability estimates, staged correction analysis, and de-aliasing conditions.

\subsection{Norms, Notation, and CP Rank}
For Fourier results we equip $\mathbb T^d=[0,2\pi]^d$ with $(2\pi)^{-d}d\boldsymbol x$, so $e^{i\boldsymbol k\cdot\boldsymbol x}$ is orthonormal. For Jacobi results we use orthonormalized polynomials in $L^2_w([-1,1]^d)$, consistent with Section~\ref{sec:ts-theory}. Here $\mathcal K=\prod_{j=1}^d\mathcal K_j$ is the retained multi-index set, with $|\mathcal K_j|=N_j$.  Consequently, the synthesis map $S_N$ from coefficient tensors to functions satisfies
\begin{equation}\label{eq:app-isometry}
  \|S_N\widehat{\mathcal V}\|_H^2 =\sum_{\boldsymbol k\in\mathcal K} |\widehat{\mathcal V}_{\boldsymbol k}|^2 =\|\widehat{\mathcal V}\|_F^2.
\end{equation}
In another solution norm $X$, use the finite-dimensional comparison $\|S_N\widehat{\mathcal V}\|_X \le C_{N,X}\|\widehat{\mathcal V}\|_F$; $C_{N,X}$ may depend on $N$ and $d$. Time-dependent Fourier models can be viewed as spatial CP tensors with time-varying coefficients or as space--time CP tensors after expanding the time factors.

\begin{proposition}[Why an infimum is necessary]\label{prop:app-nonclosed}
For order-three tensors over $\mathbb R^2$, rank at most two is not closed.
\end{proposition}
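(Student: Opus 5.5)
The plan is to use the classical de Silva--Lim construction: exhibit a tensor of rank exactly three in $\mathbb R^{2\times2\times2}$ that is a limit of rank-two tensors. Let $\mathbf e_1,\mathbf e_2$ be the standard basis of $\mathbb R^2$, pick linearly independent $\mathbf x,\mathbf y$ (for example $\mathbf x=\mathbf e_1$, $\mathbf y=\mathbf e_2$), and set
\begin{equation*}
  \mathcal W=\mathbf x\circ\mathbf x\circ\mathbf y+\mathbf x\circ\mathbf y\circ\mathbf x+\mathbf y\circ\mathbf x\circ\mathbf x .
\end{equation*}

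\textbf{Step 1: approximating sequence.} For $n\ge1$, define the rank-at-most-two tensors
\begin{equation*}
  \mathcal W_n=n\,(\mathbf x+n^{-1}\mathbf y)\circ(\mathbf x+n^{-1}\mathbf y)\circ(\mathbf x+n^{-1}\mathbf y)-n\,\mathbf x\circ\mathbf x\circ\mathbf x .
\end{equation*}
Expanding by multilinearity gives $\mathcal W_n=\mathcal W+n^{-1}(\text{three terms with two }\mathbf y\text{'s})+n^{-2}\,\mathbf y\circ\mathbf y\circ\mathbf y$. Hence $\|\mathcal W_n-\mathcal W\|_F=O(n^{-1})\to0$, so $\mathcal W$ lies in the closure of the rank-$\le2$ set.

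\textbf{Step 2: $\operatorname{rank}_{\rm CP}(\mathcal W)>2$.} This is the main step. In coordinates, $\mathcal W$ has frontal slices (indexed by the third mode) $W_1=\begin{pmatrix}0&1\\1&0\end{pmatrix}$ and $W_2=\begin{pmatrix}1&0\\0&0\end{pmatrix}$. Suppose $\mathcal W=\mathbf a_1\circ\mathbf b_1\circ\mathbf c_1+\mathbf a_2\circ\mathbf b_2\circ\mathbf c_2$. Then every slice combination $\gamma_1W_1+\gamma_2W_2$ lies in the span of the two rank-one matrices $\mathbf a_i\mathbf b_i^\top$.

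Since $W_1$ is invertible, ranks one and zero are impossible: the slice span would then be one-dimensional. So $A=[\mathbf a_1\ \mathbf a_2]$ and $B=[\mathbf b_1\ \mathbf b_2]$ must be invertible. Writing $W_k=A\,\mathrm{diag}(\mathbf c_{\cdot,k})B^\top$, the matrix $W_2W_1^{-1}=A\,D A^{-1}$ would be diagonalizable over $\mathbb R$. But $W_2W_1^{-1}=\begin{pmatrix}0&1\\0&0\end{pmatrix}$ is a nonzero nilpotent matrix, which is not diagonalizable. This contradiction shows the rank is exactly three.

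If invertibility of $D$ is needed along the way, I would first replace $W_1$ by a generic combination of the slices, which is harmless. Alternatively, one can argue directly that the pencil $W_2-\lambda W_1$ has determinant $-\lambda^2$, a repeated root; a rank-two decomposition with invertible $A,B$ would force the pencil to be simultaneously diagonalizable.

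\textbf{Conclusion.} $\mathcal W$ is a limit of tensors of rank at most two but has rank three, so the rank-$\le2$ set is not closed. This is also why $E_{\rm rank}$ is defined with an infimum in Equation~\eqref{eq:ts-errors}.
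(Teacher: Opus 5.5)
Your proof is correct and follows essentially the same route as the paper. Both use the same de Silva--Lim tensor, obtained as the limit of $\epsilon^{-1}(v_\epsilon\circ v_\epsilon\circ v_\epsilon-e_1\circ e_1\circ e_1)$ (your $\epsilon=1/n$), and both use the same slice argument: a rank-two decomposition would make the nonzero nilpotent matrix $W_2W_1^{-1}$ (the paper's $A^{-1}B$) diagonalizable. Your hedge about the invertibility of $D$ is unnecessary, because $W_1=A\,\mathrm{diag}(\mathbf c_{\cdot,1})B^\top$ is an invertible product of $2\times2$ matrices, which already forces each factor to be invertible.
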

\begin{proof}
Let $e_1,e_2$ be standard basis vectors and set $v_\epsilon=e_1+\epsilon e_2$.  Using the same outer-product notation as in the CP representation, define
\begin{equation}
  T_\epsilon=\epsilon^{-1} \bigl(v_\epsilon\circ v_\epsilon\circ v_\epsilon -e_1\circ e_1\circ e_1\bigr).
\end{equation}
Each $T_\epsilon$ is a sum of two rank-one tensors. As $\epsilon\to0$ it converges to
\begin{equation}
  T=e_2\circ e_1\circ e_1 +e_1\circ e_2\circ e_1 +e_1\circ e_1\circ e_2.
\end{equation}
Its two slices along the first mode are
\begin{equation}
  A=\begin{pmatrix}0&1\\1&0\end{pmatrix},
    \quad
  B=\begin{pmatrix}1&0\\0&0\end{pmatrix}.
\end{equation}
If $T$ had rank at most two, the invertible slice $A$ would force both remaining factor matrices to be invertible and $A^{-1}B$ to be similar to a diagonal matrix.  But
\begin{equation}
  A^{-1}B=\begin{pmatrix}0&0\\1&0\end{pmatrix}
\end{equation}
is a nonzero nilpotent Jordan block.  Hence $T$ has rank three while it is a limit of rank-two tensors.
\end{proof}

\subsection{Complexity Proofs}
\hypertarget{proof:ts-cost}{}
\begin{proof}[Proof of Proposition~\ref{prop:ts-cost}] 
There is one dense coefficient for each multi-index in $\mathcal K=\prod_{j=1}^d\mathcal K_j$, giving $\prod_jN_j$ entries.  A displayed CP representation has $r$ amplitudes and $rN_j$ entries in axis $j$, giving $r+r\sum_jN_j$.    Applying a one-dimensional map to the $r$ factor vectors in axis $j$ takes $rC_{\rm sp}(N_j)$ operations.  Summing over $j$ proves the stated bound. Fourier differentiation multiplies each coefficient by $ik_j$, so $C_{\rm sp}(N)=O(N)$.  A dense matrix-vector product uses $O(N^2)$ arithmetic. A tensor-product term applies one map in each relevant axis and remains one CP term per input term; a sum of $L$ such operator terms therefore has at most $Lr$ terms before cancellation. Materializing all factors in that sum can cost $O(LdrN)$, even if the individual transformed vectors were cheaper to compute.
\end{proof}

\hypertarget{proof:ts-nonlinear}{}
\begin{proof}[Proof of Proposition~\ref{prop:ts-nonlinear}] Writing
\begin{equation}
  u=\sum_{\alpha=1}^r\lambda_\alpha \prod_{j=1}^d\psi_\alpha^{(j)}
\end{equation}
gives
\begin{equation}\label{eq:app-square}
  u^2=\sum_{1\le\alpha\le\beta\le r} (2-\delta_{\alpha\beta})\lambda_\alpha\lambda_\beta \prod_j\bigl(\psi_\alpha^{(j)}\psi_\beta^{(j)}\bigr).
\end{equation}
There are $P_2=r(r+1)/2$ pairs.  The one-dimensional physical values of $r$ padded Fourier factors cost $O(drM\log M)$.  Forming every pair and transforming it back costs $O(dP_2M\log M)$, and retaining $N$ coefficients per factor costs $O(dP_2N)$. The output has at most $P_2$ amplitudes and $dP_2N$ factor entries. For Jacobi, a dense $Q\times(p+1)$ evaluation matrix evaluates each factor in $O(Q(p+1))$ arithmetic. Each pair needs $Q$ point products and a dense $(p+1)\times Q$ projection, hence $O(Q(p+1))$ per pair and axis. Summing gives the asserted cost. Cubic terms are indexed by multisets of size three, numbering $\binom{r+2}{3}$; their multiplicities are the multinomial coefficients, and an exact intermediate polynomial product must retain sufficient degree. Finally, the inner product of CP tensors with $R$ terms is a sum over $R^2$ pairs of products of $d$ one-dimensional inner products, each of length $N$, giving $O(dNR^2)$ without additional structure.
\end{proof}

\subsection{Spectral and Low-Rank Approximation Proofs}
\hypertarget{proof:ts-spectral-fourier}{}
\begin{proof}[Proof of the Fourier part of Lemma~\ref{lem:ts-spectral}]
Define
\begin{equation}
  \|u\|_{H^s}^2 =\sum_{\boldsymbol k\in\mathbb Z^d} (1+|\boldsymbol k|_2^2)^s|\widehat u_{\boldsymbol k}|^2.
\end{equation}
For $\boldsymbol k\notin\mathcal K_K$, at least one $|k_j|\ge K+1$, hence $1+|\boldsymbol k|_2^2\ge1+K^2$.  Because $m-s<0$,
\begin{align*}
\|u-P_Ku\|_{H^m}^2
 &=\sum_{\boldsymbol k\notin\mathcal K_K}
 (1+|\boldsymbol k|_2^2)^m|\widehat u_{\boldsymbol k}|^2\\
 &\le(1+K^2)^{m-s}\sum_{\boldsymbol k\notin\mathcal K_K}
 (1+|\boldsymbol k|_2^2)^s|\widehat u_{\boldsymbol k}|^2
 \le(1+K^2)^{m-s}\|u\|_{H^s}^2.
\end{align*}
Taking square roots proves Equation~\eqref{eq:ts-fourier}.  Under the analytic assumption, applying the union bound over the $d$ events $|k_j|>K$ in the coefficient sum gives
\begin{align*}
\|u-P_Ku\|_{L^2}^2
&\le B^2\sum_{\boldsymbol k\notin\mathcal K_K}
e^{-2a|\boldsymbol k|_1}\\
&\le dB^2\left(2\sum_{n=K+1}^\infty e^{-2an}\right)
 \left(\sum_{n\in\mathbb Z}e^{-2a|n|}\right)^{d-1}.
\end{align*}
The geometric series yields the stated $C(d,a)B e^{-aK}$ bound.
\end{proof}

\hypertarget{proof:ts-spectral-jacobi}{}
\begin{proof}[Proof of the Jacobi part of Lemma~\ref{lem:ts-spectral}]
Let $\phi_n^{(\alpha,\beta)}=J_n^{(\alpha,\beta)}/\sqrt{h_n^{(\alpha,\beta)}}$ be orthonormal Jacobi polynomials, and denote the weighted norm by $\|v\|_{\alpha,\beta}^2=\int_{-1}^1 |v(x)|^2(1-x)^{\alpha}(1+x)^{\beta}\,dx$.  For integer $s\ge1$ the Jacobi derivative identity and orthogonality at shifted parameters give
\begin{equation}\label{eq:app-jacobi-derivative}
  \left\| \partial_x^s\sum_{n\ge0}c_n\phi_n^{(\alpha,\beta)} \right\|_{\alpha+s,\beta+s}^2 =\sum_{n\ge s}\Lambda_{n,s}^{(\alpha,\beta)}|c_n|^2.
\end{equation}
Here
\begin{equation}
  \Lambda_{n,s}^{(\alpha,\beta)} =\prod_{\ell=0}^{s-1}(n-\ell)(n+\alpha+\beta+1+\ell).
\end{equation}
For $\alpha,\beta>-1$ and $n>p\ge s-1$, every factor in this product is positive and $\Lambda_{n,s}^{(\alpha,\beta)}\ge c_{\alpha,\beta,s}(p+1)^{2s}$, with a positive constant independent of $p$.  Parseval therefore gives
\begin{align*}
\|v-P_pv\|_{\alpha,\beta}^2
 &=\sum_{n>p}|c_n|^2\\
 &\le c_{\alpha,\beta,s}^{-1}(p+1)^{-2s}
 \|\partial_x^sv\|_{\alpha+s,\beta+s}^2.
\end{align*}
Tensor projections commute and each is an $L^2_w$ contraction. Their telescoping identity is
\begin{equation}
  I-\prod_{j=1}^dP_{p_j}^{(j)} =\sum_{j=1}^d \left(\prod_{\ell<j}P_{p_\ell}^{(\ell)}\right) \left(I-P_{p_j}^{(j)}\right).
\end{equation}
Apply the triangle inequality, contraction in the other axes, and the one-dimensional estimate to each summand. Fubini gives
Equation~\eqref{eq:ts-jacobi}.
\end{proof}

\hypertarget{proof:ts-rank}{}
\begin{proof}[Proof of Lemma~\ref{lem:ts-rank}]
The feasible tensors of rank at most $r$ are contained in those of rank at most $r+1$, so the infimum cannot increase.  If $r$ reaches the actual CP rank of $\widehat{\mathcal U}_N$, the target tensor is feasible and has zero error.  Nonnegativity then forces the infimum to be zero.
\end{proof}

\hypertarget{proof:ts-decomp}{}
\begin{proof}[Proof of Theorem~\ref{thm:ts-decomp}]
Using $u_N=P_Nu$ and the synthesis isometry Equation~\eqref{eq:app-isometry}, we obtain
\begin{equation}
  \begin{aligned}
    \|u-u_\theta\|_H &\le\|u-u_N\|_H+ \|u_N-S_N\widehat{\mathcal U}_\theta\|_H\\
    &=E_{\rm spec}(N)+ \|\widehat{\mathcal U}_N-\widehat{\mathcal U}_\theta\|_F\\
    &\le E_{\rm spec}(N)+ \|\widehat{\mathcal U}_N-\widehat{\mathcal V}_\eta\|_F+ \|\widehat{\mathcal V}_\eta-\widehat{\mathcal U}_\theta\|_F\\
    &\le E_{\rm spec}(N)+E_{\rm rank}(N,r)+\eta +E_{\rm opt}(\theta;\widehat{\mathcal V}_\eta).
  \end{aligned}
\end{equation}
The second inequality applies the triangle inequality in coefficient space.
\end{proof}

\subsection{Residual, Weighting, and PDE Stability Proofs}
\hypertarget{proof:ts-stability}{}
\begin{proof}[Proof of Theorem~\ref{thm:ts-stability}]
For $e=u_\theta-u\in X_0$, linearity and $Au=f$ give
\begin{equation}
  Ae=Au_\theta-f=R.
\end{equation}
The assumed stability inequality yields Equation~\eqref{eq:ts-full-res}.  For the retained coefficient vector,
\begin{equation}
  |\mathcal R_{\boldsymbol k}| \le w_{\min}^{-1} |w_{\boldsymbol k}\mathcal R_{\boldsymbol k}|.
\end{equation}
Consequently,
\begin{equation}
  \begin{aligned}
    \|R_{\mathcal K}\|_Y &\le w_{\min}^{-1} \left(\sum_{\boldsymbol k\in\mathcal K} |w_{\boldsymbol k}\mathcal R_{\boldsymbol k}|^2\right)^{1/2}\\
    &=w_{\min}^{-1} \sqrt{n\mathcal L_{\mathrm{PDE}}^{\mathrm{prec,full}}}.
  \end{aligned}
\end{equation}
Orthogonality and the triangle inequality give
\begin{equation}
  \|R\|_Y \le\|R_{\mathcal K}\|_Y+\|R_{\mathcal K^c}\|_Y.
\end{equation}
Substitution proves the loss bound. If training uses $\mathcal S\subsetneq\mathcal K$, the same argument applies only to $R_{\mathcal S}$. Indeed, a nonzero residual supported on one omitted mode has zero empirical loss and a positive full norm. For a sampling distribution $p_{\boldsymbol k}>0$, an importance-weighted expectation recovers the full sum as
\begin{equation}
  \mathbb E_p \left[ \frac{|w_{\boldsymbol k}\mathcal R_{\boldsymbol k}|^2} {p_{\boldsymbol k}} \right] =\sum_{\boldsymbol k\in\mathcal K} |w_{\boldsymbol k}\mathcal R_{\boldsymbol k}|^2.
\end{equation}
An uncorrected expectation gives only $\sum_{\boldsymbol k\in\mathcal K} p_{\boldsymbol k}|w_{\boldsymbol k}\mathcal R_{\boldsymbol k}|^2$ and needs $p_{\min}>0$ to bound the sum.
\end{proof}

\hypertarget{proof:ts-discrete}{}
\begin{proof}[Proof of Proposition~\ref{prop:ts-discrete}]
Set $z=v-u_N^G\in V_N$.  Since $A_Nu_N^G=P_Nf$, $A_Nz=P_N(Av-f)$.  Discrete stability gives
\begin{equation}
  \|z\|_H \le\beta_N^{-1}\|P_N(Av-f)\|_H.
\end{equation}
The triangle inequality
\begin{equation}
  \|u-v\|_H \le\|u-u_N^G\|_H+\|u_N^G-v\|_H
\end{equation}
then proves Equation~\eqref{eq:ts-discrete}.  A full weighted average bounds the projected residual using the weight lower bound as in Theorem~\ref{thm:ts-stability}.
\end{proof}

\hypertarget{proof:ts-poisson}{}
\begin{proof}[Proof of Corollary~\ref{cor:ts-poisson}]
Since $\Delta e=R$ and both solutions have zero mean, Fourier diagonalization gives
\begin{equation}
  -|\boldsymbol k|_2^2\widehat e_{\boldsymbol k} =\mathcal R_{\boldsymbol k},
  \quad \boldsymbol k\ne\boldsymbol0.
\end{equation}
Parseval on normalized $\mathbb T^d$ then gives
\begin{equation}
  \begin{aligned}
    \|e\|_{L^2}^2 &=\sum_{\boldsymbol k\ne\boldsymbol0} \frac{|\mathcal R_{\boldsymbol k}|^2}{|\boldsymbol k|_2^4},\\
    \|\nabla e\|_{L^2}^2 &=\sum_{\boldsymbol k\ne\boldsymbol0} |\boldsymbol k|_2^2|\widehat e_{\boldsymbol k}|^2 =\sum_{\boldsymbol k\ne\boldsymbol0} \frac{|\mathcal R_{\boldsymbol k}|^2}{|\boldsymbol k|_2^2}.
  \end{aligned}
\end{equation}
The latter is the squared homogeneous $H^{-1}$ norm of $R$. The ideal weight $|\boldsymbol k|_2^{-2}$ makes each weighted residual coefficient equal to $-\widehat e_{\boldsymbol k}$, so its full $\ell^2$ norm is exactly the solution $L^2$ error.  A generic positive weight only gives
\begin{equation}
  \begin{aligned}
    \min_{\boldsymbol k\in\mathcal K} |w_{\boldsymbol k}||\boldsymbol k|_2^2\,\|e\|_{L^2} &\le\|WR\|_2\\
    &\le \max_{\boldsymbol k\in\mathcal K} |w_{\boldsymbol k}||\boldsymbol k|_2^2\,\|e\|_{L^2}
  \end{aligned}
\end{equation}
on a fixed finite retained set, provided all relevant coefficients are included.
\end{proof}

\begin{proposition}[Heat equation with initial mismatch]\label{prop:app-heat}
Let $u_t-\Delta u=0$ and $v_t-\Delta v=R$ on $\mathbb T^d\times(0,T)$, with periodic spatial boundary conditions, $R\in L^2(0,T;L^2)$, and
$e_0=v(0)-u(0)\in L^2$. Then
\begin{equation}\label{eq:app-heat}
  \sup_{0\le t\le T}\|v(t)-u(t)\|_{L^2} \le\|e_0\|_{L^2}+\sqrt T\,\|R\|_{L^2(0,T;L^2)}.
\end{equation}
\end{proposition}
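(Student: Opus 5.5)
We work with the error $e=v-u$. By linearity it satisfies $e_t-\Delta e=R$ on $\mathbb T^d\times(0,T)$ with $e(0)=e_0$. The plan is to diagonalize in Fourier space, as in Corollary~\ref{cor:ts-poisson}, using the normalized measure so that Parseval holds exactly.

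For each mode $\boldsymbol k$ the coefficient obeys the scalar ODE $\partial_t\widehat e_{\boldsymbol k}+|\boldsymbol k|_2^2\widehat e_{\boldsymbol k}=\widehat R_{\boldsymbol k}$. Duhamel's formula gives
\begin{equation*}
\widehat e_{\boldsymbol k}(t)=e^{-|\boldsymbol k|_2^2t}\,\widehat e_{0,\boldsymbol k}+\int_0^t e^{-|\boldsymbol k|_2^2(t-s)}\widehat R_{\boldsymbol k}(s)\,ds .
\end{equation*}
Equivalently, $e(t)=S(t)e_0+\int_0^tS(t-s)R(s)\,ds$, where $S(t)$ is the heat semigroup acting as the Fourier multiplier $e^{-|\boldsymbol k|_2^2t}$.

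Since $0<e^{-|\boldsymbol k|_2^2t}\le1$, Parseval shows that $S(t)$ is an $L^2$ contraction for all $t\ge0$. Taking $L^2$ norms and using Minkowski's integral inequality then gives
\begin{equation*}
\|e(t)\|_{L^2}\le\|e_0\|_{L^2}+\int_0^t\|R(s)\|_{L^2}\,ds .
\end{equation*}
Cauchy--Schwarz in time bounds the integral by $\sqrt t\,\|R\|_{L^2(0,t;L^2)}\le\sqrt T\,\|R\|_{L^2(0,T;L^2)}$. Taking the supremum over $t\in[0,T]$ yields \eqref{eq:app-heat}.

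An alternative is the energy method. From $\tfrac12\frac{d}{dt}\|e\|^2+\|\nabla e\|^2=(R,e)\le\|R\|\,\|e\|$ one gets $\frac{d}{dt}\|e\|\le\|R\|$, which integrates to the same bound. The Duhamel route is cleaner because it avoids dividing by $\|e\|$ where it may vanish.

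The main obstacle is justification under the stated low regularity, $R\in L^2(0,T;L^2)$ and $e_0\in L^2$, where $e$ is only a mild or weak solution. The approach is to take the mode-wise Duhamel formula as the definition of the solution: each coefficient equation is an absolutely continuous linear ODE with a locally integrable forcing, so the formula holds mode by mode. The $L^2$ bound for each fixed $t$ then follows from the estimate above applied to every finite Fourier truncation, followed by monotone convergence as the truncation grows. The bound holds for every $t$, not merely almost every $t$, because the estimate is uniform in $t$.
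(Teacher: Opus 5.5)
Your proof is correct and follows essentially the same route as the paper: Duhamel's formula for $e=v-u$, $L^2$-contractivity of the heat semigroup because its Fourier multiplier $e^{-|\boldsymbol k|_2^2t}$ has modulus at most one, and Cauchy--Schwarz in time before taking the supremum. Your mode-wise justification of the Duhamel formula under the stated low regularity is a reasonable addition that the paper leaves implicit.
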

\begin{proof}
The heat semigroup is an $L^2$ contraction because its Fourier multiplier is $e^{-|\boldsymbol k|_2^2t}$ with modulus at most one. Duhamel's formula for $e=v-u$ is
\begin{equation}
  e(t)=e^{t\Delta}e_0+ \int_0^t e^{(t-s)\Delta}R(s)\,ds.
\end{equation}
Taking norms, applying the contraction, and then Cauchy--Schwarz gives
\begin{equation}
  \begin{aligned}
    \|e(t)\|_{L^2} &\le\|e_0\|_{L^2}+\int_0^t\|R(s)\|_{L^2}\,ds\\
    &\le\|e_0\|_{L^2} +\sqrt t\,\|R\|_{L^2(0,t;L^2)}.
  \end{aligned}
\end{equation}
The estimate follows upon taking the supremum.
\end{proof}

\begin{proposition}[Wave equation with both initial mismatches]\label{prop:app-wave}
Let $u_{tt}-\Delta u=0$ and $v_{tt}-\Delta v=R$ on the periodic domain, with $e_0=v(0)-u(0)$, $e_1=v_t(0)-u_t(0)$, and
$R\in L^2(0,T;L^2)$. Then
\begin{equation}\label{eq:app-wave}
  \|v(t)-u(t)\|_{L^2} \le\|e_0\|_{L^2}+t\|e_1\|_{L^2} +\frac{t^{3/2}}{\sqrt3}\|R\|_{L^2(0,t;L^2)}.
\end{equation}
The energy also satisfies
$E(t)^{1/2}\le E(0)^{1/2}+
2^{-1/2}\int_0^t\|R(s)\|_{L^2}ds$,
$E=\frac12(\|e_t\|_{L^2}^2+\|\nabla e\|_{L^2}^2)$.
\end{proposition}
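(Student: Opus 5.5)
The plan is to diagonalize the error equation in Fourier space, exactly as for the heat estimate in Proposition~\ref{prop:app-heat}. Set $e=v-u$. By linearity $e_{tt}-\Delta e=R$ with $e(0)=e_0$ and $e_t(0)=e_1$. Using the orthonormal Fourier basis on normalized $\mathbb T^d$, each coefficient satisfies the forced oscillator
\[
\widehat e_{\boldsymbol k}''+\omega_{\boldsymbol k}^2\widehat e_{\boldsymbol k}=\widehat R_{\boldsymbol k},\qquad \omega_{\boldsymbol k}=|\boldsymbol k|_2 .
\]
Its Duhamel representation is
\[
e(t)=\cos(t\sqrt{-\Delta})e_0+\frac{\sin(t\sqrt{-\Delta})}{\sqrt{-\Delta}}e_1+\int_0^t\frac{\sin((t-s)\sqrt{-\Delta})}{\sqrt{-\Delta}}R(s)\,ds .
\]
For the zero mode, $\sin(t\omega)/\omega$ is interpreted as $t$.

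The key multiplier bounds are $|\cos(t\omega)|\le1$ and $|\sin(t\omega)/\omega|\le t$, both uniform in $\omega\ge0$. By Parseval, the cosine operator is an $L^2$ contraction and the sine operator has norm at most $t$. This gives the first two terms, $\|e_0\|_{L^2}+t\|e_1\|_{L^2}$. For the forcing term, I take norms inside the integral (Minkowski) to get $\int_0^t(t-s)\|R(s)\|_{L^2}\,ds$. Cauchy--Schwarz then gives
\[
\Bigl(\int_0^t(t-s)^2ds\Bigr)^{1/2}\|R\|_{L^2(0,t;L^2)}=\frac{t^{3/2}}{\sqrt3}\|R\|_{L^2(0,t;L^2)}.
\]
Summing the three contributions with the triangle inequality yields Equation~\eqref{eq:app-wave}.

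For the energy bound, I differentiate $E(t)=\tfrac12(\|e_t\|^2+\|\nabla e\|^2)$. Integrating by parts on the periodic domain (no boundary terms) and using the equation gives $E'(t)=\operatorname{Re}\langle e_t,R\rangle\le\|e_t\|\,\|R\|$. Since $\|e_t\|\le\sqrt{2E}$, this becomes $E'\le\sqrt2\,E^{1/2}\|R\|$. Formally, $\tfrac{d}{dt}E^{1/2}=E'/(2E^{1/2})\le 2^{-1/2}\|R\|$, and integrating in time gives the claim. Equivalently, one can work modewise in the quantity $\tfrac12(|\widehat e_{\boldsymbol k}'|^2+\omega^2|\widehat e_{\boldsymbol k}|^2)$, where the computation is explicit.

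The main obstacle is justifying this energy differentiation at the stated regularity, where $R$ is only in $L^2(0,T;L^2)$, and at times where $E=0$. I would handle both by replacing $E^{1/2}$ with $(E+\delta)^{1/2}$, integrating, and letting $\delta\to0$. The regularity would be obtained by proving the estimate first for Fourier truncations (smooth finite systems of ODEs) and passing to the limit, which is legitimate because both sides are continuous in the data. The same truncation argument also justifies the Duhamel manipulations used for the displacement bound.
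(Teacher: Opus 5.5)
Your proposal is correct and takes essentially the same route as the paper: the modal Duhamel formula with the multiplier bounds $|\cos(t\omega)|\le 1$ and $|\sin(t\omega)/\omega|\le t$, then Cauchy--Schwarz with $\int_0^t(t-s)^2\,ds=t^3/3$; and the energy identity $E'=\langle R,e_t\rangle\le\sqrt{2E}\,\|R\|$, integrated after dividing by $2\sqrt{E}$. Your $(E+\delta)^{1/2}$ device and Fourier-truncation limit spell out the regularization that the paper only mentions in passing for the zero-energy case.
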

\begin{proof}
For $L=-\Delta\ge0$, the modal formula is
\begin{equation}
  \begin{aligned}
    e(t)={}&\cos(t\sqrt L)e_0 +\frac{\sin(t\sqrt L)}{\sqrt L}e_1 + \int_0^t \frac{\sin((t-s)\sqrt L)}{\sqrt L}R(s)\,ds,
  \end{aligned}
\end{equation}
where the quotient equals $t$ at the zero mode.  The spectral bounds
$|\cos(t\sqrt\lambda)|\le1$ and
$|\sin(t\sqrt\lambda)|/\sqrt\lambda\le t$ for $\lambda\ge0$
give
\begin{equation}
  \|e(t)\|_{L^2} \le\|e_0\|_{L^2}+t\|e_1\|_{L^2} +\int_0^t(t-s)\|R(s)\|_{L^2}\,ds.
\end{equation}
Cauchy--Schwarz and
$\int_0^t(t-s)^2ds=t^3/3$ prove Equation~\eqref{eq:app-wave}.
For energy, test $e_{tt}-\Delta e=R$ against $e_t$ to obtain
\begin{equation}
  E'(t)=\langle R,e_t\rangle \le\sqrt{2E(t)}\,\|R(t)\|_{L^2}.
\end{equation}
Where $E(t)>0$, divide by $2\sqrt E$ and integrate; the zero-energy case follows by regularization.
\end{proof}

\begin{proposition}[Conditional nonlinear stability]\label{prop:app-nonlinear}
(i) For $F(z)=\rho z(1-z)$, $\rho>0$, suppose the exact and approximate reaction solutions are bounded in absolute value by $B$ on $[0,T]$ and
$R=v_t-F(v)\in L^1(0,T;L^2)$. Then
\begin{equation}\label{eq:app-reaction}
  \|v(t)-u(t)\|_{L^2}\le e^{\rho(1+2B)t} \left(\|v(0)-u(0)\|_{L^2} +\int_0^t\|R(s)\|_{L^2}ds\right).
\end{equation}
(ii) On $[-1,1]^d$ with homogeneous Dirichlet conditions, let $A(z)=-\varepsilon^2\Delta z+z^3-z$, and let $\lambda_1$ be the first Dirichlet eigenvalue of $-\Delta$. If $\varepsilon^2>\lambda_1^{-1}$, then two solutions with full residual $R=A(v)-A(u)$ satisfy
\begin{equation}\label{eq:app-allen}
  \|v-u\|_{H_0^1,\nabla} \le(\varepsilon^2-\lambda_1^{-1})^{-1} \|R\|_{H^{-1}},
\end{equation}
where $\|z\|_{H_0^1,\nabla}=\|\nabla z\|_{L^2}$.
\end{proposition}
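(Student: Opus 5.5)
For part (i) I would work pointwise in $\boldsymbol x$, since the logistic equation has no spatial coupling, and only at the end pass to $L^2$. For part (ii) I would use the standard monotonicity (energy) argument for the Allen--Cahn operator, with the Poincar\'e inequality absorbing the destabilizing $-z$ term.

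\emph{Part (i).} Set $e=v-u$ and $L=\rho(1+2B)$. For a.e.\ fixed $\boldsymbol x$, subtracting $u_t=F(u)$ from $v_t=F(v)+R$ gives $e_t=F(v)-F(u)+R$. Since $F(v)-F(u)=\rho\bigl(1-(u+v)\bigr)e$ and $|u|,|v|\le B$, the factor satisfies $|\rho(1-(u+v))|\le L$. Writing the linear ODE via its integrating factor, I would use either Duhamel directly or the differential inequality $\tfrac{d}{dt}|e|\le L|e|+|R|$, understood in the integrated sense to avoid differentiating $|e|$ at zeros. Scalar Gronwall then gives
\[
|e(\boldsymbol x,t)|\le e^{Lt}\Bigl(|e(\boldsymbol x,0)|+\int_0^t|R(\boldsymbol x,s)|\,ds\Bigr),
\]
where I also use $e^{L(t-s)}\le e^{Lt}$. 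Taking the $L^2$ norm in $\boldsymbol x$ and applying Minkowski's integral inequality to the time integral yields Equation~\eqref{eq:app-reaction}. The hypothesis $R\in L^1(0,T;L^2)$ is exactly what makes the last term finite.

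\emph{Part (ii).} Let $e=v-u\in H_0^1$ and pair $A(v)-A(u)=R$ with $e$. After integrating the Laplacian by parts, using the zero trace of $e$, this gives
\[
\varepsilon^2\|\nabla e\|_{L^2}^2+\langle v^3-u^3,e\rangle-\|e\|_{L^2}^2=\langle R,e\rangle .
\]
The cubic term is nonnegative because $v^3-u^3=(v^2+uv+u^2)e$ and $v^2+uv+u^2\ge0$, so it can be dropped. The Poincar\'e inequality $\|e\|_{L^2}^2\le\lambda_1^{-1}\|\nabla e\|_{L^2}^2$ bounds the negative term. For the right-hand side, with $H^{-1}$ normed as the dual of $(H_0^1,\|\nabla\cdot\|_{L^2})$, we have $\langle R,e\rangle\le\|R\|_{H^{-1}}\|\nabla e\|_{L^2}$. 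Combining these,
\[
(\varepsilon^2-\lambda_1^{-1})\|\nabla e\|_{L^2}^2\le\|R\|_{H^{-1}}\|\nabla e\|_{L^2}.
\]
If $\nabla e=0$ the bound is trivial; otherwise divide by $\|\nabla e\|_{L^2}$, and positivity of $\varepsilon^2-\lambda_1^{-1}$ gives Equation~\eqref{eq:app-allen}.

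\emph{Main obstacle.} I expect the difficulty to be functional-analytic rather than algebraic, namely justifying the pairing $\langle v^3-u^3,e\rangle$ and the integration by parts in arbitrary dimension $d$. In high $d$, $H_0^1$ does not embed into $L^4$, so $z^3$ need not lie in $H^{-1}$. I would handle this by stating the standing assumption that $u,v\in H_0^1\cap L^\infty$ (or at least $L^4$), under which all terms are well defined and the formal computation is rigorous. I would also record that $\lambda_1=d\pi^2/4$ on $[-1,1]^d$, so the hypothesis $\varepsilon^2>\lambda_1^{-1}$ is explicit. In part (i), the only care needed is measurability in $\boldsymbol x$ of the pointwise Gronwall bound, which holds because the solutions are absolutely continuous in $t$ for a.e.\ $\boldsymbol x$.
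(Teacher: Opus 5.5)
Your proposal is correct and follows the paper's proof. Part (ii) is the same argument: pair with $e$, drop the nonnegative cubic term, apply Poincar\'e, then $H^{-1}$ duality. Part (i) uses the same Lipschitz constant $\rho(1+2B)$ and Gronwall; the only difference is that you run Gronwall pointwise in $\boldsymbol x$ and pass to $L^2$ by Minkowski's integral inequality, whereas the paper applies it directly to the upper Dini derivative of $\|e\|_{L^2}$. Your extra assumption $u,v\in L^4$ (or $L^\infty$), needed to make the cubic pairing meaningful once $d\ge 5$, is a legitimate regularity hypothesis that the paper leaves implicit.
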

\begin{proof}
(i) On $[-B,B]$, $|F'(z)|=\rho|1-2z|\le\rho(1+2B)=L$. For $e=v-u$, $e_t=F(v)-F(u)+R$.  The upper Dini derivative of
$\|e\|_{L^2}$ satisfies
\begin{equation}
  D^+\|e\|_{L^2} \le L\|e\|_{L^2}+\|R\|_{L^2}.
\end{equation}
Multiplying by $e^{-Lt}$, integrating, and dropping a factor $e^{-Ls}\le1$ proves Equation~\eqref{eq:app-reaction}.
(ii) Put $e=v-u$.  Testing the residual difference with $e$ yields
\begin{align*}
\langle R,e\rangle
 &=\varepsilon^2\|\nabla e\|_{L^2}^2
 +\int(v^3-u^3)(v-u)\,dx-\|e\|_{L^2}^2\\
 &\ge(\varepsilon^2-\lambda_1^{-1})\|\nabla e\|_{L^2}^2.
\end{align*}
The cubic integrand is nonnegative because
\begin{equation}
  (v^3-u^3)(v-u) =(v-u)^2(v^2+uv+u^2)\ge0.
\end{equation}
Poincare's inequality supplied the $\lambda_1^{-1}$ term. Duality bounds the left side by $\|R\|_{H^{-1}}\|\nabla e\|_{L^2}$; division proves Equation~\eqref{eq:app-allen}.
\end{proof}

\subsection{Correction and Combined-Bound Proofs}
\hypertarget{proof:ts-rank-sum}{}
\begin{proof}[Proof of Lemma~\ref{lem:ts-rank-sum}]
Concatenate the $r_0$ rank-one terms in $\widehat{\mathcal U}^{[0]}$ with the at most $r_m$ terms from each $\Delta\widehat{\mathcal U}^{(m)}$.  This gives an explicit CP expansion with at most $r_0+\sum_mr_m$ terms, so the minimum CP rank cannot exceed that number.
\end{proof}

\hypertarget{proof:ts-correction}{}
\begin{proof}[Proof of Theorem~\ref{thm:ts-correction}]
By the definition of the cumulative update,
\begin{equation}
  \begin{aligned}
    \widehat{\mathcal E}^{[m]} &=\widehat{\mathcal U}_N^*- \bigl(\widehat{\mathcal U}^{[m-1]} +\Delta\widehat{\mathcal U}^{(m)}\bigr)\\
    &=\widehat{\mathcal E}^{[m-1]} -\Delta\widehat{\mathcal U}^{(m)}.
  \end{aligned}
\end{equation}
An attained exact minimizer therefore has $\|\widehat{\mathcal E}^{[m]}\|_F=\delta_m$.  Because the zero tensor has rank zero and is admissible, $\delta_m\le\|\widehat{\mathcal E}^{[m-1]}\|_F$. With inexactness,
\begin{equation}
  \|\widehat{\mathcal E}^{[m]}\|_F \le\delta_m+\varepsilon_m \le\|\widehat{\mathcal E}^{[m-1]}\|_F+\varepsilon_m.
\end{equation}
Under the extra compressibility inequality, $a_m:=\|\widehat{\mathcal E}^{[m]}\|_F$ satisfies $a_m\le q_ma_{m-1}+\varepsilon_m$. Induction yields
\begin{equation}
  a_M\le \left(\prod_{j=1}^M q_j\right)a_0 +\sum_{m=1}^M\varepsilon_m \prod_{j=m+1}^M q_j.
\end{equation}
The formula is true for $M=1$; multiplying the $M-1$ formula by $q_M$ and adding $\varepsilon_M$ gives the $M$ formula.  If $q_m\le q<1$ and $\varepsilon_m\le\varepsilon$, bound each product by the corresponding power of $q$ and sum the geometric series.
\end{proof}

\begin{proposition}[Residual Reduction and Solution Error]
\label{prop:app-stage-objective}
Fix a complete coefficient residual objective
\begin{equation}
  J(v)=\|W\widehat{P_N(Av-f)}\|_2,
\end{equation}
where $W$ applies the spectral weights to the coefficient vector. Let $\mathcal C_m$ be an admissible correction class containing zero. If stage $m$ returns $c_m\in\mathcal C_m$ with
\begin{equation}
  J(v_{m-1}+c_m) \le\inf_{c\in\mathcal C_m}J(v_{m-1}+c)+\zeta_m,
\end{equation}
then $J(v_m)\le J(v_{m-1})+\zeta_m$. If, on the relevant finite-dimensional error space,
\begin{equation}
  a\|e\|_H\le J(u+e)\le b\|e\|_H
  \quad (a>0),
\end{equation}
then
\begin{equation}
  \|e_m\|_H \le\frac{b}{a}\|e_{m-1}\|_H+\frac{\zeta_m}{a}.
\end{equation}
\end{proposition}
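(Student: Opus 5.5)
The first claim follows from the near-optimality inequality alone, because the correction class $\mathcal C_m$ contains zero. Evaluating the infimum at $c=0$ gives $\inf_{c\in\mathcal C_m}J(v_{m-1}+c)\le J(v_{m-1})$. Here the update is $v_m=v_{m-1}+c_m$, so the hypothesis gives
\begin{equation*}
  J(v_m)\le\inf_{c\in\mathcal C_m}J(v_{m-1}+c)+\zeta_m\le J(v_{m-1})+\zeta_m .
\end{equation*}
This mirrors the first step in the proof of Theorem~\ref{thm:ts-correction}, where admissibility of the zero tensor gave $\delta_m\le\|\widehat{\mathcal E}^{[m-1]}\|_F$.

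For the error recurrence, write $e_m=v_m-u$, so that $v_m=u+e_m$ and $v_{m-1}=u+e_{m-1}$. We assume that both errors lie in the finite-dimensional error space on which the two-sided equivalence $a\|e\|_H\le J(u+e)\le b\|e\|_H$ holds. This holds, for instance, when $u$ is replaced by the discrete target $u_N^G$ of Proposition~\ref{prop:ts-discrete} and all iterates live in $V_N$. We then chain three inequalities: the lower equivalence bound at $e_m$, the residual non-increase just proved, and the upper equivalence bound at $e_{m-1}$. This gives
\begin{equation*}
  a\|e_m\|_H\le J(v_m)\le J(v_{m-1})+\zeta_m\le b\|e_{m-1}\|_H+\zeta_m .
\end{equation*}
Dividing by $a>0$ yields the stated bound.

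No step is technically hard. The main point of care is the scope of the equivalence hypothesis. It must be applied to the actual errors $e_{m-1}$ and $e_m$, which requires the correction classes to keep iterates inside the finite-dimensional space where the comparison constants $a,b$ are valid. The constants may also depend on $N$ and $d$, as with $\beta_N$ in Proposition~\ref{prop:ts-discrete}. I will also remark why the result is only a conditional recurrence. Since $b/a\ge1$ in general, a decrease in the training objective alone does not give contraction of the solution error. Contraction needs a rate assumption of the type $\delta_m\le q_m\|\widehat{\mathcal E}^{[m-1]}\|_F$ in Theorem~\ref{thm:ts-correction}.
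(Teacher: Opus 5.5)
Your proof is correct and is essentially the paper's argument: admissibility of the zero correction bounds the infimum by $J(v_{m-1})$, and the chain $a\|e_m\|_H\le J(v_m)\le J(v_{m-1})+\zeta_m\le b\|e_{m-1}\|_H+\zeta_m$, divided by $a$, gives the recurrence. Your caveat about the scope of the equivalence hypothesis is accurate and agrees with the paper's own caveats; in particular, taking the discrete target $u_N^G$ with all iterates in $V_N$ is a valid instance.
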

\begin{proof}
Zero is an admissible correction, so the infimum is at most $J(v_{m-1})$.  This gives the objective inequality.  The norm-equivalence inequalities then yield
\begin{equation}
  \begin{aligned}
    a\|e_m\|_H &\le J(v_m)\\
    &\le J(v_{m-1})+\zeta_m\\
    &\le b\|e_{m-1}\|_H+\zeta_m.
  \end{aligned}
\end{equation}
The factor $b/a$ quantifies how residual reduction transfers to the solution-error bound. For $a=b$, an exact stage minimization yields non-increasing solution error. For the zero-mean Poisson problem with complete modes and ideal weights $w_{\boldsymbol k}=|\boldsymbol k|_2^{-2}$, one has $a=b=1$. Comparisons across sampled or changing training objectives use a common fixed validation norm.
\end{proof}

\hypertarget{proof:ts-combined}{}
\begin{proof}[Proof of Theorem~\ref{thm:ts-combined}]
The triangle inequality and Equation~\eqref{eq:app-isometry} give
\begin{equation}
  \begin{aligned}
    \|u-u_{\rm TS}^{[M]}\|_H &\le\|u-P_Nu\|_H+ \|\widehat{\mathcal U}_N-\widehat{\mathcal U}^{[M]}\|_F\\
    &=E_{\rm spec}(N)+\|\widehat{\mathcal E}^{[M]}\|_F.
  \end{aligned}
\end{equation}
Insert Equation~\eqref{eq:ts-recurrence} with $\widehat{\mathcal U}_N^*=\widehat{\mathcal U}_N$ to get Equation~\eqref{eq:ts-combined}. The same two triangle inequalities as in Theorem~\ref{thm:ts-decomp} bound $\|\widehat{\mathcal E}^{[0]}\|_F$ by $E_{\rm rank}(N,r_0)+\eta+ E_{\rm opt}(\theta;\widehat{\mathcal V}_\eta)$, with $\widehat{\mathcal U}^{[0]}=\widehat{\mathcal U}_\theta$. The full-residual estimate independently bounds the same left-hand side through PDE stability and may be compared with the approximation bound.
\end{proof}

\subsection{De-aliasing and Dimension Dependence}
\begin{proposition}[Exact retained nonlinear coefficients]
\label{prop:app-dealias}
(i) Let every Fourier factor be supported in $[-K,K]$, with $N=2K+1$. For a polynomial nonlinearity of degree $s$, an $M$-point periodic FFT product has the correct coefficients at all retained modes $[-K,K]$ if $M>(s+1)K$. In particular, a quadratic product is exact on the retained set if $M>3K$; $M=\lceil3N/2\rceil$ suffices.
(ii) If each Jacobi factor has polynomial degree at most $p$, the exact projection of its degree-$s$ product onto degrees at most $p$ is obtained by $Q$-point Gauss--Jacobi quadrature whenever
\begin{equation}
  2Q-1\ge(s+1)p,
  \quad Q\ge\left\lceil\frac{(s+1)p+1}{2}\right\rceil.
\end{equation}
For a cubic product the bound is $Q\ge2p+1$, and any sequential multiplication must keep enough intermediate degree to avoid truncation before the final projection.
\end{proposition}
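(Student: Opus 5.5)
For part (i), I would argue one axis at a time and then lift the result to the full tensor. A polynomial nonlinearity of degree $s$ applied to a CP function expands, as in the proof of Proposition~\ref{prop:ts-nonlinear}, into a finite sum of rank-one terms. Each term is a product over $j$ of one-dimensional products of at most $s$ factors. FFT evaluation, pointwise multiplication and the inverse transform all act independently on each axis. It therefore suffices to prove the one-dimensional claim for a product $g=\psi_1\cdots\psi_s$ with each $\psi_i$ supported in $[-K,K]$; linearity then recombines the terms.

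The exact product $g$ has Fourier support in $[-sK,sK]$. The $M$-point DFT of its samples on the uniform grid returns the aliased coefficients
\begin{equation*}
  \widetilde g_k=\sum_{\ell\in\mathbb Z}\widehat g_{k+\ell M}.
\end{equation*}
For a retained index $|k|\le K$ and any $\ell\neq0$, we have $|k+\ell M|\ge M-K$. This exceeds $sK$ exactly when $M>(s+1)K$, so every alias term vanishes and $\widetilde g_k=\widehat g_k$. For $s=2$ the condition is $M>3K$. With $N=2K+1$ we get $\lceil 3N/2\rceil\ge 3K+3/2>3K$, which proves the stated sufficient choice.

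For part (ii), I would again reduce to one axis. Each per-axis product $g=\psi_1\cdots\psi_s$ is a polynomial of degree at most $sp$. The projection coefficient for $n\le p$ is $h_n^{-1}\int g\,J_n^{(\alpha,\beta)}w^{(\alpha,\beta)}\,dx$, and its integrand $gJ_n$ has degree at most $(s+1)p$. A $Q$-point Gauss--Jacobi rule is exact for weighted polynomials of degree at most $2Q-1$, so the condition $2Q-1\ge(s+1)p$ gives exactness. Solving for $Q$ yields $Q\ge\lceil((s+1)p+1)/2\rceil$. For the cubic case $s=3$ this becomes $\lceil(4p+1)/2\rceil=2p+1$.

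The closing remark about sequential multiplication needs its own justification. If an intermediate product such as $\psi_1\psi_2$ is first projected to degree at most $p$ and then multiplied by $\psi_3$, the result differs from the true product, because the dropped modes of degree between $p$ and $2p$ still contribute to low-degree modes after the next multiplication. I would point this out explicitly, noting that the quadrature argument applies only when the full product is evaluated pointwise at the nodes before a single final projection.

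I do not expect any real obstacle. The only care needed is the tensor lifting: the multidimensional projection factorizes as a product of one-dimensional projections, and quadrature or truncation applied axis by axis to each separable term is exact on that term. Fubini and the product structure of the grid, node set and weight ensure that exactness in each axis gives exactness of the retained multidimensional coefficients.
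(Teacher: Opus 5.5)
Your proposal is correct and follows essentially the same route as the paper: for (i), the DFT aliasing sum with every alias $k+\ell M$, $\ell\neq0$, lying outside the product support $[-sK,sK]$ once $M>(s+1)K$; for (ii), the degree count $(s+1)p\le 2Q-1$ with Gauss--Jacobi exactness, together with the same warning about intermediate truncation. Your explicit lifting from one axis to the full tensor via the separable CP terms and Fubini is a harmless addition that the paper's one-dimensional proof leaves implicit.
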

\begin{proof}
(i) The exact $s$-fold Fourier product has support $[-sK,sK]$. The $M$-point discrete Fourier transform returns at retained index $n\in[-K,K]$ the sum of exact coefficients at $n+\ell M$ over integers $\ell$.  If $M>(s+1)K$, then
\begin{equation}
  \begin{aligned}
    n+M&>-K+(s+1)K=sK,\\
    n-M&<K-(s+1)K=-sK.
  \end{aligned}
\end{equation}
Every nonzero alias is outside the product support, so only the true coefficient at $n$ remains.  For $s=2$ and odd $N=2K+1$, $\lceil3N/2\rceil>3K$. Even-length FFTs require an explicit Nyquist convention.
(ii) A degree-$s$ polynomial in an input of degree $p$ has degree at most $sp$.  Multiplying it by a test Jacobi polynomial of degree at most $p$ gives degree at most $(s+1)p$.  Gauss--Jacobi quadrature integrates the Jacobi weight times every polynomial of degree at most $2Q-1$ exactly. The stated inequality therefore makes each projected coefficient exact. If an intermediate product is projected to degree $p$ before forming a cubic, degrees $p+1,\ldots,2p$ can influence its later low-degree coefficients, so quadrature exactness alone cannot repair that early loss.
\end{proof}

The complexity bounds are stated for a prescribed CP rank. The rank required for a target accuracy and the associated norm constants may depend on dimension.

\section{Computational Cost} \label{sec:comp cost}
Figure~\ref{fig:computational_cost} compares the computational cost per training update on the 30-dimensional wave equation. TensorSpectra requires only $0.529$ GFLOPs for the base stage and $0.468$ GFLOPs for the correction stage, which is substantially lower than PINN ($47.34$ GFLOPs), SPINN ($33.16$ GFLOPs), and PirateNet ($242.22$ GFLOPs), and is also slightly lower than SINN ($0.716$ GFLOPs). TNN has the lowest per-update cost at $0.048$ GFLOPs, but its accuracy on this benchmark is notably lower than that of TensorSpectra. These results indicate that the low-rank spectral formulation provides a favorable balance between computational cost and solution accuracy in high-dimensional PDE solving.
\begin{figure}[h]
    \centering
    \includegraphics[width=0.8\textwidth]{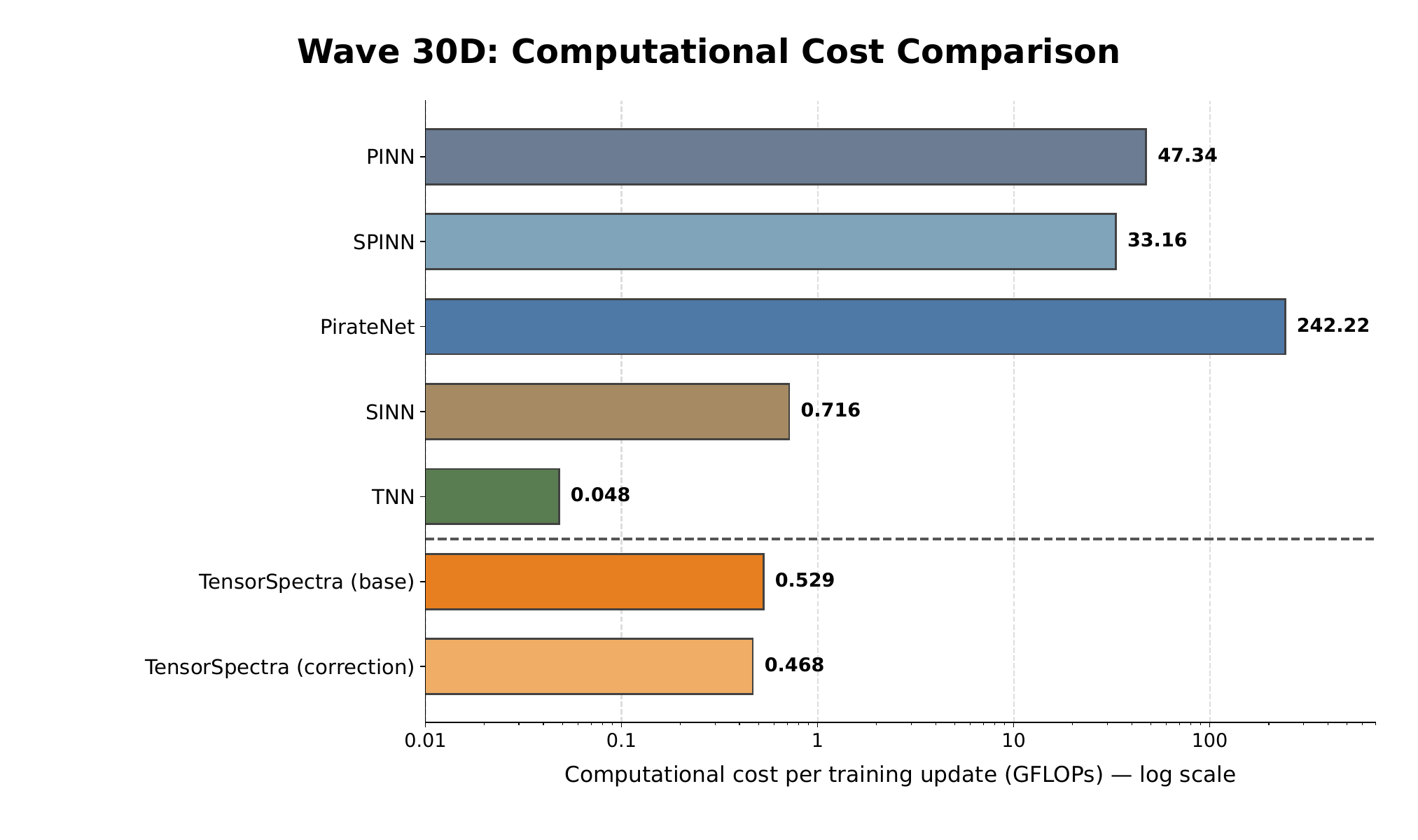}
    \caption{Computational cost comparison on the 30-dimensional Wave benchmark. We report the GFLOPs required per training update for each method on a logarithmic scale. TensorSpectra is evaluated separately for the base and residual-correction stages. Lower values indicate lower computational cost.}
    \label{fig:computational_cost}
\end{figure}

\section{Experimental Configurations}
\label{sec:ex_config}
This section summarizes the main experimental configurations used throughout our numerical studies. Table~\ref{tab:hyperparameters} reports the spectral basis, temporal polynomial degree, tensor rank, training epochs, and learning rates adopted for TensorSpectra across different PDE benchmarks.

\begin{table}[htbp]
\centering
\caption{Main hyperparameter settings of TensorSpectra for different PDE benchmarks.}
\label{tab:hyperparameters}
\small
\begin{tabular}{lccccc}
\toprule
Benchmark & Basis & Time degree & Rank & Epochs & Learning rate \\
\midrule
Poisson
& Fourier
& --
& $4+2$
& $30\mathrm{k}+20\mathrm{k}$
& 1.0e-03 \\

Heat
& Fourier
& 4
& $8+4$
& $30\mathrm{k}+20\mathrm{k}$
& 1.0e-03 \\

Wave
& Fourier
& 8
& $8+4$
& $50\mathrm{k}+50\mathrm{k}$
& 1.0e-03 \\

Allen--Cahn
& Legendre
& --
& $8+4$
& $25\mathrm{k}+25\mathrm{k}$
& 1.0e-03 \\

Reaction
& Legendre
& 8
& $8+4$
& $25\mathrm{k}+25\mathrm{k}$
& 1.0e-03 \\
\bottomrule
\end{tabular}
\end{table}

\end{document}